\documentclass[a4paper]{article}
\usepackage{latexsym,bm}
\usepackage{amssymb,amsmath,amsthm}
\usepackage[english,german]{babel}
\usepackage[colorlinks=true]{hyperref}
\usepackage{color}
\usepackage{times}
\allowdisplaybreaks[4]

\newtheorem{theorem}{Theorem}[section]
\newtheorem{lemma}[theorem]{Lemma}
\newtheorem{remark}[theorem]{Remark}
\newtheorem{proposition}[theorem]{Proposition}
\newtheorem{definition}[theorem]{Definition}
\newtheorem{corollary}[theorem]{Corollary}
\numberwithin{equation}{section}
\hypersetup{linkcolor=blue,urlcolor=red,citecolor=red}
\title{Rapid stabilizability of infinite-dimensional control systems with time delays{\footnote{This work was supported by the National Natural Science Foundation of China
under grant 12171377, 12571483.}}}
\author{Yaxing Ma{\footnote{School of
Mathematics and Statistics, Wuhan University, Wuhan 430072, China;
e-mail: yaxingma@yeah.net.}}\and Lijuan Wang{\footnote{School of
Mathematics and Statistics, Wuhan University, Wuhan 430072, China;
e-mail: ljwang.math@whu.edu.cn.}}\and Huaiqiang Yu{\footnote{School of Mathematics and KL-AAGDM, Tianjin University, Tianjin, 300350, China;
email: huaiqiangyu@yeah.net.}}}
\date{}
\begin{document}
\selectlanguage{english}
\maketitle
\begin{abstract}
    In this paper, we investigate the rapid stabilizability of linear infinite-dimensional control systems with a constant time delay. Under the assumptions that the state operator generates an immediately compact semigroup and that the delay coefficient is constant, we establish two main results:
(i) The presence of a time-delay term does not affect the rapid stabilizability of the control system; that is, this property depends only on the state and control operators;
(ii) Static feedback is sufficient to achieve rapid stabilization of the system.
Applications are also presented.
\end{abstract}

{\bf Keywords.} Rapid stabilizability, time delay, infinite-dimensional system, unbounded control operator, static feedback
\vskip 5pt
{\bf AMS subject classifications.}  93C23, 93C43, 93D15

\section{Introduction}\label{yu-section-1}
 Studies on infinite-dimensional delayed (or retarded) control systems date back to the 1970s (see, for instance, \cite{Delfour-Macalla-Mitter, Datko-Lagnese-Polois, Yamamoto} and the references therein). Since then, significant progress has been made in this area. Among the various approaches, the state-space approach (see, e.g., \cite{Nakagiri}) has proved to be particularly influential. This approach reformulates delayed systems as coupled systems, which can in turn be interpreted as abstract evolution equations within the $C_0$-semigroup framework (see Section \ref{sec-yu-10-1-2}), thereby greatly facilitating their analysis.
In particular, when combined with duality arguments in control theory, the state-space approach allows one to characterize the controllability properties of delayed systems in terms of observability properties of the corresponding adjoint systems (see, e.g., \cite{Khodja-Bouzidi-Dupaix-Maniar}). In this paper, we focus on the stabilizability of infinite-dimensional delayed control systems.

We now provide a brief overview of several related results in this area.
In \cite{Jeong}, the  stabilizability of a general delayed control system in Hilbert space was investigated, and a spectral characterization of stabilizability was obtained under the assumptions that the state operator generates an analytic semigroup, the associated coupled operator has only point spectrum, and the control operator is bounded. The approach used in \cite{Jeong} builds upon \cite{Prato-Lunardi}, where the $L^2$-stabilizability of integro-differential parabolic equations was studied. It is worth noting that $L^2$-stabilizability
in the setting of \cite{Jeong} is equivalent to exponential stabilizability (see \cite[Theorem 6.2.7]{Curtain-Zwart}).
Further results for systems with constant delay were obtained in \cite{Nakagiri-Yamamoto}, where stabilizability was established under the assumptions that the state operator generates an immediately compact semigroup, the control operator is bounded, and the control space is finite-dimensional. These results were later extended to more general abstract settings in \cite{Hadd-Zhong}. More recently, \cite[Section 4.4]{Kunisch-Wang-Yu} addressed the stabilizability of a delayed heat equation on the whole space, where the state operator has only continuous spectrum and does not generate a compact semigroup.
In parallel, considerable attention has been devoted to the stabilization of delay-free systems via delayed feedback. For instance, the backstepping method and the Artstein transformation were employed in \cite{Krstic} and \cite{Prieur-Trelat}, respectively, to design delayed boundary feedback laws for reaction-diffusion equations.

From the above literature, it is evident that most existing works on delayed control systems focus on exponential stabilizability, while rapid stabilizability has received comparatively little attention. One possible explanation is the prevailing view that, for open-loop systems, rapid stabilizability is, to a large extent, equivalent to null controllability. However, recent studies \cite{Kunisch-Wang-Yu, Liu-Wang-Xu-Yu} provide examples of systems that are rapidly stabilizable but not null controllable. Meanwhile, it has been shown in \cite{Khodja-Bouzidi-Dupaix-Maniar, Kunisch-Wang-Yu} that the presence of delay may even destroy the null controllability of certain parabolic systems.
These observations reveal a sharp contrast: while delays may severely deteriorate controllability properties, their impact on rapid stabilizability remains largely unclear. This leads to the following fundamental issue:

\begin{enumerate}
\item[(P1)] \emph{Does the presence of delay affect rapid stabilizability?}
\end{enumerate}
    Moreover, it is well known that feedback stabilization involves both the design of feedback laws, also referred to as feedback operators, and the observation of system states. For delayed systems, the evolution depends on past states, which gives rise to two types of feedback: dynamic feedback, involving both the current and past states, and static feedback, which depends solely on the current state.
   As mentioned above, the state-space approach is the classical method for treating control systems with time delays. However, feedback laws derived via this approach are typically of a dynamic type. Therefore, simplifying the structure of the feedback law is a meaningful objective. This raises the following issue:
\begin{enumerate}
\item[(P2)] \emph{Can rapid stabilization be achieved using only static feedback?}
\end{enumerate}

The main purpose of this paper is to address Problems (P1) and (P2). However, tackling these problems is far from straightforward.
Indeed, the delay term can be interpreted as a short-term forcing acting on the system. For linear systems subject to such forcing, exponential stabilizability may fail when the range of the control operator does not coincide with the entire state space. From this perspective, delayed control systems can be viewed as control systems with forcing terms.
Fortunately, by means of the state-space approach, such systems can be transformed into coupled systems driven solely by the control (see \cite{Kunisch-Wang-Yu, Nakagiri} or Section \ref{sec-yu-10-1-2}), thereby enabling the application of techniques developed for abstract linear control systems.
\vskip 5pt
\noindent
\textbf{Notations.}
Let $\mathbb{N}^+ := \mathbb{N}\setminus\{0\}$, $\mathbb{R}^+ := (0,+\infty)$, $\mathbb{C}_{\gamma}^+ := \{z \in \mathbb{C} : \mathrm{Re}z > \gamma\}$, and $\mathbb{C}_{\gamma}^- := \{z \in \mathbb{C} : \mathrm{Re} z < \gamma\}$ for $\gamma \in \mathbb{R}$.
Given a Banach space $X$, we denote its norm and dual space by $\|\cdot\|_X$ and $X'$, respectively. When $X$ is a Hilbert space, its inner product is denoted by $\langle \cdot, \cdot \rangle_X$.
Given two Banach spaces $X_1$ and $X_2$, we denote by $\mathcal{L}(X_1; X_2)$ the space of all bounded linear operators from $X_1$ to $X_2$, equipped with the usual operator norm. If $X_1 = X_2$, we simply write $\mathcal{L}(X_1) := \mathcal{L}(X_1; X_1)$.
For a (possibly unbounded) linear operator $L$ from $X_1$ to $X_2$, we denote its domain by $D(L) := \{ f \in X_1 : Lf \in X_2 \}$, its adjoint by $L^*$, and its resolvent set and spectrum by $\rho(L)$ and $\sigma(L)$, respectively.
If $\Lambda$ is a closed subspace of $X$, its dimension is denoted by $\mbox{Dim}(\Lambda)$. Finally, $C(\cdots)$ denotes a positive constant whose value may depend on the parameters specified in the parentheses.
\subsection{Control system}
     Let  $X$ and $U$ be two complex separable Hilbert spaces, with $X'=X$ and $U'=U$. Let $\tau\in\mathbb{R}^+$ and  $\kappa\in\mathbb{R}$.
We consider the following control system:
\begin{equation}\label{yu-11-23-1}
\begin{cases}
    y_t(t)=Ay(t)+\kappa y(t-\tau)+Bu(t),&t\in\mathbb{R}^+,\\
    y(0)=y_0,\\
    y(t)=\varphi(t),&t\in(-\tau,0),
\end{cases}
\end{equation}
    where  $u\in L^2_{loc}(\mathbb{R}^+;U)$, $y_0\in X$, and $\varphi\in L^2(-\tau,0;X)$. The state operator $A$ and the control operator $B$
    satisfy the following assumptions:
\begin{enumerate}
     \item[$(A_1)$] The operator $A:D(A)(:=X_1)\subset X\to X$ generates  an immediately  compact semigroup $S(\cdot)$ on $X$.
\item[$(A_2)$] The operator $B$ belongs to $\mathcal{L}(U;X_{-1})$, where $X_{-1}$ is the completion of $X$ with respect to the norm $\|f\|_{-1}:=\|(\rho_0I-A)^{-1}f\|_{X}$ ($f\in X$). Here and throughout the paper,
    $\rho_0\in \rho(A)\cap \mathbb{R}$ is fixed.

    \item[$(A_3)$] There exist  $T>0$ and  $C(T)>0$ such that
\begin{equation}\label{yu-5-4-2}
    \int_0^T\|B^*S^*(t)\varphi\|_{U}^2\mathrm dt \leq C(T)\|\varphi\|_{X}^2\;\;\mbox{for any}\;\;\varphi\in D(A^*).
\end{equation}
\end{enumerate}

\begin{remark}\label{yu-remark-8-00-1}
We make several remarks on system \eqref{yu-11-23-1} and Assumptions $(A_1)$-$(A_3)$.
\begin{enumerate}
\item [$(i)$] Assumption $(A_1)$ implies that both $S(t)$ and $S^*(t)$ are compact for all $t>0$. Hence, the almost exponential decay condition (AEDC) introduced in \cite{Ma-Trelat-Wang-Yu} holds (see \cite[Remark 1.8]{Ma-Trelat-Wang-Yu}).
\item [$(ii)$]  For simplicity, we denote by $X^*_1$ the space $D(A^*)$ endowed with the norm $\|(\rho_0I-A^*)f\|_X$  for $f\in D(A^*)$. It is well known that $X_{-1}$ is the dual space of $X^*_1$ with respect to the pivot space $X$ (see \cite[Section 2.9, Chapter 2]{Tucsnak-Weiss}), and we denote the corresponding duality pairing by $\langle\cdot,\cdot\rangle_{X_{-1},X^*_1}$.
\item [$(iii)$] The operator $A$ with domain $X_1$ admits a unique extension $\widetilde{A}\in \mathcal{L}(X;X_{-1})$ in  the following sense:
\begin{equation*}\label{yu-5-4-3}
    \langle \widetilde{A}\varphi,\psi\rangle_{X_{-1},X^*_1}=\langle \varphi, A^*\psi\rangle_{X}
    \;\;\mbox{for any}\;\;\varphi\in X,\;\;\psi\in X_1^*.
\end{equation*}
  Moreover, $(\rho_0I-A^*)^{-1}\in\mathcal{L}(X;X_1^*)$ and  $(\rho_0I-\widetilde{A})^{-1}\in \mathcal{L}(X_{-1};X)$ (see \cite[Proposition 2.10.3, Chapter 2]{Tucsnak-Weiss}).  Define $\widetilde{S}(\cdot):=(\rho_0I-\widetilde{A})S(\cdot)(\rho_0I-\widetilde{A})^{-1}$. Then $\widetilde{S}(\cdot)$ is  an immediately  compact semigroup on $X_{-1}$, and $\widetilde{A}$ with domain $X$ is its generator (see \cite[Proposition 2.10.4,  Chapter 2]{Tucsnak-Weiss}). We refer to $\widetilde{S}(\cdot)$ as the extension of $S(\cdot)$ from $X$ to $X_{-1}$.
 In view of  these facts, Assumption $(A_2)$ can be equivalently replaced by $(\rho_0I-\widetilde{A})^{-1}B\in
  \mathcal{L}(U;X)$. Furthermore, by  $(A_2)$, we have $B^*\in \mathcal{L}(X^*_1;U)$, i.e.,  $B^*(\rho_0I-A^*)^{-1}\in\mathcal{L}(X;U)$.
\item[$(iv)$] Assumption $(A_3)$ is equivalent to that, for any $T>0$, there exists a constant $C(T)>0$ such that  \eqref{yu-5-4-2} holds
    (see \cite[Proposition 4.3.2,  Chapter 4]{Tucsnak-Weiss}). Moreover, Assumptions $(A_2)$ and $(A_3)$ imply that
    $\int_0^\cdot \widetilde{S}(\cdot-s)Bu(s)\mathrm ds\in C([0,+\infty);X)$ for each $u\in L^2_{loc}(\mathbb{R}^+;U)$.
\item[$(v)$] Under Assumptions $(A_1)$-$(A_3)$, system \eqref{yu-11-23-1} admits a unique solution in $C([0,+\infty);X)$ for every $y_0\in X$, $u\in L_{loc}^2(\mathbb{R}^+;U)$, and $\varphi\in L^2(-\tau,0;X)$
(see, for instance, \cite[Section 2]{Nakagiri}). Throughout this paper, we denote system \eqref{yu-11-23-1} by  $[A,B]_{\kappa,\tau}$  and its solution by $y(\cdot;y_0,\varphi,u)$. When $\kappa=0$ (i.e., the delay term vanishes), we simply write $[A,B]$ instead of  $[A,B]_{0,\tau}$.
\end{enumerate}
\end{remark}

    \subsection{Main result}

   We begin this subsection by recalling the notions of exponential stabilizability and rapid stabilizability for system $[A,B]$. These definitions can be found in \cite{Liu-Wang-Xu-Yu}, and further comments are given in \cite{Kunisch-Wang-Yu, Ma-Trelat-Wang-Yu, Ma-Wang-Yu}.
\begin{definition}\label{yu-definition-10-18-1}
    \begin{enumerate}
      \item [$(i)$] Let $\alpha\in\mathbb{R}^+$. System $[A,B]$ is said to be
  $\alpha$-stabilizable  if there exists a $C_0$-semigroup $\mathcal{T}(\cdot)$ on $X$, with generator $M: D(M)\subset X\to X$, and an operator
  $K\in \mathcal{L}(D(M);U)$ such that
\begin{enumerate}
  \item [$(a)$] there exists a constant $C_1\geq 1$ such that $\|\mathcal{T}(t)\|_{\mathcal{L}(X)}
  \leq C_1e^{-\alpha t}$ for all  $t\in\mathbb{R}^+$;
\item[$(b)$] for any $x\in D(M)$,  $M x=\widetilde{A}x+BKx$;
  \item [$(c)$] there exists a constant $C_2\geq 0$ such that $\|K\mathcal{T}(\cdot)x\|_{L^2(\mathbb{R}^+;U)}
  \leq C_2\|x\|_X$ for all $x\in D(M)$.
\end{enumerate}
    The operator  $K$ is  called a feedback law associated with the $\alpha$-stabilizability of system $[A,B]$. Equivalently,
    system $[A,B]$ is said to be $\alpha$-stabilizable with the feedback law $K$.
      \item [$(ii)$] System $[A,B]$ is said to be exponentially stabilizable if it is $\alpha$-stabilizable for some $\alpha\in\mathbb{R}^+$.
If it is $\alpha$-stabilizable for every $\alpha\in\mathbb{R}^+$, then it is said to be rapidly stabilizable.
    \end{enumerate}
\end{definition}

    We now introduce  the notion of   stabilizability for system $[A,B]_{\kappa,\tau}$.
\begin{definition}\label{yu-def-12-18-1}
    Let $\kappa\in\mathbb{R}\setminus\{0\}$ and $\tau\in\mathbb{R}^+$.
\begin{enumerate}
     \item [$(i)$] Let $\alpha\in\mathbb{R}^+$. System $[A,B]_{\kappa,\tau}$ is said to be $\alpha$-stabilizable if there exists an operator
     $K:=K(\alpha)\in\mathcal{L}(X\times L^2(-\tau,0;X);U)$ and a constant $C(\alpha)>0$ such that  for each $y_0\in X$ and $\varphi\in L^2(-\tau,0;X)$, the corresponding solution $y_K(\cdot;y_0,\varphi)$
          to the closed-loop system:
\begin{equation}\label{yu-12-20-100}
\begin{cases}
    y_t(t)=Ay(t)+\kappa y(t-\tau)+B[K(y(t),
    y(t+\cdot))^\top],&t\in\mathbb{R}^+,\\
    y(0)=y_0,\\
    y(t)=\varphi(t),&t\in (-\tau,0)
\end{cases}
\end{equation}
    satisfies
\begin{equation}\label{yu-12-20-101}
    \|y_K(t;y_0,\varphi)\|_{X}\leq C(\alpha) e^{-\alpha t}(\|y_0\|^2_{X}
    +\|\varphi\|^2_{L^2(-\tau,0;X)})^{\frac{1}{2}}\;\;\mbox{for all}\;\;t\in\mathbb{R}^+.
\end{equation}
    The operator  $K$ is  called a feedback law associated with the  $\alpha$-stabilizability of system $[A,B]_{\kappa,\tau}$.
    \item[$(ii)$] Let $\alpha \in \mathbb{R}^+$. System $[A,B]_{\kappa,\tau}$ is said to be $\alpha$-stabilizable with static feedback if there exists an operator $K := K(\alpha) \in \mathcal{L}(X;U)$ and a constant $C(\alpha) > 0$ such that, for each $y_0 \in X$ and $\varphi \in L^2(-\tau,0;X)$, the corresponding solution $y_K(\cdot; y_0, \varphi)$ to the closed-loop system:
\begin{equation}\label{yu-12-20-200}
\begin{cases}
    y_t(t) = Ay(t) + \kappa y(t-\tau) + BKy(t), & t \in \mathbb{R}^+,\\
    y(0) = y_0,\\
    y(t) = \varphi(t), & t \in (-\tau,0),
\end{cases}
\end{equation}
satisfies \eqref{yu-12-20-101}.
\item[$(iii)$] System $[A,B]_{\kappa,\tau}$ is said to be stabilizable (respectively, stabilizable with static feedback) if it is $\alpha$-stabilizable (respectively, $\alpha$-stabilizable with  static feedback) for some $\alpha \in \mathbb{R}^+$.
     \item[$(iv)$] System $[A,B]_{\kappa,\tau}$ is said to be rapidly stabilizable (respectively, rapidly stabilizable with static feedback) if it is $\alpha$-stabilizable (respectively, $\alpha$-stabilizable with static feedback) for every $\alpha \in \mathbb{R}^+$.
\end{enumerate}
\end{definition}
\begin{remark}\label{yu-remark-11-23-2}
    Two remarks on Definition \ref{yu-def-12-18-1} are in order.
    \begin{enumerate}
      \item [$(i)$] System \eqref{yu-12-20-100} admits a unique solution in $C([0,+\infty);X)$ whenever $K$ is bounded (see Proposition \ref{yu-proposition-11-26-2}). Hence, the notion of $\alpha$-stabilizability for system $[A,B]_{\kappa,\tau}$ is well defined. The same holds for $\alpha$-stabilizability with  static feedback and related notations (see Remark~\ref{remark-11-27-1}).
      \item [$(ii)$] By the state-space approach (see Section \ref{sec-yu-10-1-2}), system \eqref{yu-12-20-100} can be rewritten
      in the following coupled form:
\begin{equation*}
\begin{cases}
    y_t(t)=Ay(t)+\kappa z(t,-\tau)+B[K(y(t),
    z(t,\cdot))^\top],&t\in\mathbb{R}^+,\\
    \partial_t z(t,\theta)=\partial_{\theta} z(t,\theta),
    & (t,\theta)\in \mathbb{R}^+\times(-\tau,0),\\
    z(t,0)=y(t), & t\in[0,+\infty),\\
    z(0,\theta)=\varphi(\theta), & \theta\in(-\tau,0),\\
    y(0)=y_0.
\end{cases}
\end{equation*}
     This is the closed-loop system associated with system $[A,B]_{\kappa,\tau}$ under a dynamic feedback control. In other words, the stabilizability of $[A,B]_{\kappa,\tau}$ is achieved via dynamic feedback. Since the feedback law acts on the augmented state, it generally depends on the historical behavior of the system. By contrast, the stabilizability via static feedback introduced in $(ii)$ of Definition
     \ref{yu-def-12-18-1} requires the feedback law to depend solely on the current state $y(t)$, without involving any historical information.

      \item [$(iii)$] The use of bounded feedback to stabilize a delayed control system with an unbounded control operator may seem restrictive. However, under the present assumptions, this does not lead to any essential loss of generality.
Similar results were obtained in \cite{Ma-Trelat-Wang-Yu} and are further supported by our main result (i.e., Theorem \ref{yu-theorem-9-01-1}).
    \end{enumerate}

\end{remark}

We are now in a position to state the main result of this paper.
\begin{theorem}\label{yu-theorem-9-01-1}
    Let $\kappa\in\mathbb{R}\setminus\{0\}$ and $\tau>0$. Suppose that Assumptions $(A_1)$-$(A_3)$ hold. Then the following statements are equivalent:
\begin{enumerate}
  \item[$(i)$] System $[A,B]$ is rapidly stabilizable.
  \item [$(ii)$] System $[A,B]_{\kappa,\tau}$ is rapidly stabilizable.
  \item [$(iii)$] System $[A,B]_{\kappa,\tau}$ is  rapidly stabilizable with  static feedback.
\end{enumerate}
\end{theorem}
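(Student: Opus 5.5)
We will prove $(iii)\Rightarrow(ii)\Rightarrow(i)\Rightarrow(iii)$. The implication $(iii)\Rightarrow(ii)$ is immediate: a static feedback $K\in\mathcal{L}(X;U)$ gives the dynamic feedback $(y,\varphi)\mapsto Ky$ in $\mathcal{L}(X\times L^2(-\tau,0;X);U)$, and the decay estimate \eqref{yu-12-20-101} is the same for both closed-loop systems.

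For $(ii)\Rightarrow(i)$ we will use a spectral characterization of rapid stabilizability under $(A_1)$--$(A_3)$, relying on the AEDC from \cite{Ma-Trelat-Wang-Yu} (Remark \ref{yu-remark-8-00-1}$(i)$). Since $S^*(t)$ is compact, $\sigma(A^*)$ consists of isolated eigenvalues of finite multiplicity with no finite accumulation point. The criterion we expect is that $[A,B]$ is rapidly stabilizable if and only if $B^*\xi\neq0$ for every eigenvector $\xi$ of $A^*$ (the Hautus test on every eigenvalue). Suppose $(i)$ fails. Then there are $\lambda\in\sigma(A^*)$ and $\xi\neq0$ with $A^*\xi=\lambda\xi$ and $B^*\xi=0$. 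Set $\mu=\bar\lambda$ and pair the delayed closed-loop solution with $\xi$: $w(t)=\langle y(t),\xi\rangle$ satisfies the scalar delay equation $w'=\mu w+\kappa w(t-\tau)$, and the control has no effect on it because $B^*\xi=0$. The characteristic equation $z=\mu+\kappa e^{-\tau z}$ has roots with arbitrarily large negative real part, but it also has a root with $\operatorname{Re}z\ge \operatorname{Re}\mu-|\kappa|e^{\tau|\cdot|}$, a bound that is finite for every fixed $\mu$. So $w$ cannot decay at an arbitrary rate $\alpha$: choose $y_0=\xi$ and a history $\varphi$ that produces a pure exponential solution $e^{z_0 t}\xi$ of the uncontrolled equation along that root. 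This contradicts $(ii)$.

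For $(i)\Rightarrow(iii)$, the main step, we will construct a static feedback directly. Fix $\alpha>0$ and choose $\beta\gg\alpha$, to be fixed later. By $(i)$ and the results of \cite{Ma-Trelat-Wang-Yu}, $[A,B]$ is $\beta$-stabilizable with a \emph{bounded} feedback $K_\beta\in\mathcal{L}(X;U)$. For instance, take the finite-dimensional projection onto the unstable modes with $\operatorname{Re}>-\beta'$ and a feedback built on that part. The closed-loop semigroup $\mathcal{T}_\beta$ then satisfies $\|\mathcal{T}_\beta(t)\|\le C_\beta e^{-\beta t}$. The system \eqref{yu-12-20-200} becomes $y'=(\widetilde A+BK_\beta)y+\kappa y(t-\tau)$, which is a bounded delay perturbation of an exponentially stable semigroup. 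Its characteristic roots solve $z\in\sigma(A+BK_\beta+\kappa e^{-\tau z})$, and the resolvent $(z-A-BK_\beta)^{-1}$ is bounded by $C_\beta/(\operatorname{Re}z+\beta)$.

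Naively we need $|\kappa|e^{\tau\alpha}C_\beta<\beta-\alpha$. The obstacle is that $C_\beta$ may grow with $\beta$. We will handle it by splitting $X$ into the finite-dimensional spectral subspace $X_N$ of $A$ (eigenvalues with real part $>-\beta'$) and its complement, on which $S$ already decays like $e^{-\beta' t}$ with a constant uniform in $\beta'$, using compactness. On $X_N$ we design the feedback for the finite-dimensional delay system $[A_N,B_N]_{\kappa,\tau}$, placing all its characteristic roots to the left of $-\alpha$. This is possible because the Hautus condition on $X_N$ together with the delay makes the pair spectrally controllable, and the spectrum of a finite-dimensional delay system can be assigned through the static gain. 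The coupling from the infinite-dimensional part back into $X_N$ is a bounded delay term, and we will control it with a small-gain and Laplace transform (Paley--Wiener) argument on $\mathbb{C}_{-\alpha}^+$, giving the estimate \eqref{yu-12-20-101}.

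The hardest point is proving that the finite-dimensional delay system with static feedback can push every root below $-\alpha$. Our first attempt will be a gain $K=\gamma\,\Pi$ with $\gamma$ large, where $\Pi$ is built from a Gramian so that $A_N+B_NK$ has eigenvalues near $-\gamma$. Then the roots satisfy $|z+\gamma|\lesssim|\kappa|e^{-\tau\operatorname{Re}z}$, which forces $\operatorname{Re}z<-\alpha$ once $\gamma>\alpha+|\kappa|e^{\tau\alpha}\cdot$(condition number). Controlling that condition number uniformly is exactly where the argument could fail.
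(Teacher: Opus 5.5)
Your implication $(iii)\Rightarrow(ii)$ is fine, and your splitting in $(i)\Rightarrow(iii)$ is the same as in the paper's Step~1. However, you leave open exactly the step you flag as possibly failing.

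\textbf{The missing idea in $(i)\Rightarrow(iii)$.} The delay coefficient is the scalar $\kappa I$. Let $M$ be the closed-loop matrix on the finite-dimensional spectral part. Then $\det\bigl((z-\kappa e^{-\tau z})I-M\bigr)=0$ if and only if $z-\kappa e^{-\tau z}\in\sigma(M)$. So the characteristic roots of $w'=Mw+\kappa w(t-\tau)$ are determined by the eigenvalues of $M$ alone, and no condition number ever enters.

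Since the Hautus test makes the finite-dimensional pair controllable, you can choose the gain with $\sigma(M)\subset\mathbb{C}^-_{-\alpha-|\kappa|e^{\alpha\tau}}$. For $\mathrm{Re}\,z\ge-\alpha$ one has $\mathrm{Re}(z-\kappa e^{-\tau z})\ge-\alpha-|\kappa|e^{\alpha\tau}$, so there is no root in $\overline{\mathbb{C}^+_{-\alpha}}$. The delay semigroup is compact for $t>\tau$ (Lemma \ref{yu-lemma-4-16-1}), so its growth bound equals its spectral bound, which is therefore $<-\alpha$.

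The same computation handles the delayed infinite-dimensional remainder; this is the paper's Sub-step~1.1. Replace $\sigma(M)$ by the spectrum of $A$ on the complement, which lies in $\{\mathrm{Re}\,z\le-\beta'\}$ with $\beta'>\alpha+|\kappa|e^{\alpha\tau}$. No constant needs to be uniform in $\beta'$, which compactness alone would not give you anyway.

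\textbf{The coupling.} You also misread the coupling. With a feedback $F\Pi y$, where $\Pi$ is the spectral projection, $\Pi$ commutes with $A$ and with $\kappa I$. Hence the finite-dimensional component evolves autonomously, and the only coupling runs from it into the complement through the control term. The closed loop is therefore a cascade, not a loop requiring a small-gain argument. A small-gain argument would in any case have no small parameter, since $\|BF\|$ is large. The paper instead bounds the complement by the admissibility estimate \eqref{wang-11-02-1} on intervals of length $T$, obtains $Y\in L^2(\mathbb{R}^+;\mathcal{X})$, and concludes with Datko's lemma.

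\textbf{On $(ii)\Rightarrow(i)$.} Your route is genuinely different from, and more elementary than, the paper's. The paper passes to the coupled pair $(\mathcal{A},\mathcal{B})$, uses its frequency-domain characterization, and builds an eigenvector of $\mathcal{A}^*$ in $\ker\mathcal{B}^*$ via Lemma \ref{yu-lemma-11-29-1}. Your projection onto $\xi$ works, but it needs two repairs.

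First, $e^{z_0t}\xi$ is not a solution of the full equation, because $\xi$ is an eigenvector of $A^*$, not of $A$. What does hold is the following. Take $y_0=\xi$ and $\varphi(\theta)=e^{z_0\theta}\xi$. Uniqueness for the scalar delay equation gives $\langle y(t),\xi\rangle=e^{z_0t}\|\xi\|^2$ under any feedback. Hence $\|y(t)\|\ge e^{\mathrm{Re}\,z_0 t}\|\xi\|$, which contradicts $\alpha$-stabilizability for $\alpha>-\mathrm{Re}\,z_0$. For this you only need the existence of one root of $z-\kappa e^{-\tau z}=\bar\lambda$, which is exactly Lemma \ref{yu-lemma-11-29-1}; your lower bound on its real part is not needed.

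Second, the direction of your ``expected'' criterion that you actually invoke is the nontrivial one, namely that Hautus at every eigenvalue implies rapid stabilizability, and it must be proved. The paper does this at the end of Step~3, using the projection $P_\alpha$, the finite-dimensional observability estimate \eqref{yu-12-1-4}, and the weak observability characterization of \cite{Liu-Wang-Xu-Yu}.
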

\begin{remark}\label{yu-remark-25-9-22}
    Several remarks on Theorem \ref{yu-theorem-9-01-1} are given.
\begin{enumerate}
  \item [$(i)$] By \cite[Theorem 2]{Kunisch-Wang-Yu} and \cite[Lemma 2.4]{Ma-Trelat-Wang-Yu}, it follows that, under Assumptions $(A_1)$-$(A_3)$, system $[A,B]$ is rapidly stabilizable in the sense of Definition \ref{yu-definition-10-18-1} if and only if, for each $\alpha>0$, there exists a constant $C(\alpha)>0$ such that
\begin{equation*}\label{yu-25-7-16-1}
    \|\varphi\|_{X}^2\leq C(\alpha)(\|(\lambda I-A^*)\varphi\|_X^2+\|B^*\varphi\|_U^2)
    \;\;\mbox{for all}\;\;\lambda\in\mathbb{C}_{-\alpha}^+\;\;\mbox{and}\;\;\varphi\in X_1^*.
\end{equation*}
    Combining this characterization with Theorem \ref{yu-theorem-9-01-1}, we obtain  a frequency-domain characterization of  rapid
    stabilizability for system $[A,B]_{\kappa,\tau}$. The following weak observability inequality: for each $\alpha>0$, there exist positive constants
    $C(\alpha)$ and $D(\alpha)$ such that
$$
    \|S^*(t)\varphi\|_X^2\leq C(\alpha)\int_0^t\|B^*S^*(s)\varphi\|_U^2\mathrm ds+D(\alpha)e^{-\alpha t}\|\varphi\|_X^2\;\;\mbox{for all}\;\;\varphi\in X,\;\;t\in \mathbb{R}^+,
$$
     provides a time-domain characterization of rapid stabilizability for  system $[A,B]$ (see \cite{Liu-Wang-Xu-Yu}).  From Theorem \ref{yu-theorem-9-01-1}, it also serves as the corresponding time-domain characterization of rapid stabilizability for the delayed system $[A,B]_{\kappa,\tau}$.
  \item [$(ii)$] Theorem \ref{yu-theorem-9-01-1} shows that, under Assumptions $(A_1)$-$(A_3)$, the rapid stabilizability of  system $[A,B]_{\kappa,\tau}$ is independent of the delay length. The underlying reason is that, under Assumption $(A_1)$, for any prescribed decay rate, system $[A,B]_{\kappa,\tau}$ can be decomposed within the $C_0$-framework (introduced in Section \ref{sec-yu-10-1-2}) into an exponentially stable infinite-dimensional subsystem and a controllable finite-dimensional subsystem. Although the delay may effect the finite-dimensional part arising from this decomposition, it does not change its essential geometric structure or controllability. As a result, the stabilization mechanism remains unchanged.
       A typical example is provided by delayed heat equations, where delays may originate from sensing or communication lags. Although the delay influences the transient evolution of the temperature field, it does not alter the finite-dimensional unstable structure that determines rapid stabilizability. Therefore, the delayed system possesses the same rapid stabilizability as the corresponding delay-free system. For further details, see Section \ref{yu-sec-10-1}.
  \item [$(iii)$] One of the main objectives of this paper is to highlight the distinction between rapid stabilizability and controllability for infinite-dimensional control systems. At present, it remains unclear whether Assumption $(A_1)$ can be weakened or whether the results remain valid for time-varying delay parameters. A particularly interesting issue is whether a similar phenomenon also holds for hyperbolic control systems.
\end{enumerate}
\end{remark}
   \subsection{Novelties of this paper}

As mentioned above, time delays may severely deteriorate controllability, while their impact on rapid stabilizability has remained largely unclear. In this paper, we resolve this issue for system $[A,B]_{\kappa,\tau}$ under Assumptions $(A_1)$-$(A_3)$.
The main contributions of this paper can be summarized as follows.

First, we show that, in sharp contrast to controllability, time delays do not affect rapid stabilizability within our framework. More precisely, the rapid stabilizability of control systems with time delays is completely determined by the state and control operators and is independent of the delay parameters. This provides a complete resolution of Problem (P1).

Second, although the classical state-space approach typically leads to dynamic feedback laws, we prove that static feedback alone suffices to achieve rapid stabilization. This resolves Problem (P2) and yields a substantially simpler feedback structure.

Taken together, these results reveal several new and noteworthy features of rapid stabilizability in  control systems with time delays.
%

\subsection{Plan of this paper}
This paper is organized as follows. In Section \ref{sec-yu-10-1-2}, we focus on the well-posedness and admissibility of the control system. In Section \ref{yu-section-4}, we present the proof of the main result. Applications are provided in Section \ref{yu-sec-10-1-4}. Section \ref{yu-sec-6.2} is devoted to the appendix.

\section{Preliminaries}\label{sec-yu-10-1-2}
Fix $\tau>0$ and $\kappa\in\mathbb{R}\setminus\{0\}$.
By $(v)$ of Remark \ref{yu-remark-8-00-1}, for each $(y_0,\varphi,u)^\top\in X\times L^2(-\tau,0;X)\times L^2_{loc}(\mathbb{R}^+;U)$, system \eqref{yu-11-23-1} admits a unique solution $y(\cdot;y_0,\varphi,u)\in C([0,+\infty);X)$.
Let $(y_0,\varphi)^\top\in X\times L^2(-\tau,0;X)$ be given and write $y(\cdot):=y(\cdot;y_0,\varphi,0)$ for simplicity.

Define
\begin{equation}\label{yu-4-9-2}
z(t,\theta):=y(t+\theta), \;\; (t,\theta)\in [0,+\infty)\times(-\tau,0).
\end{equation}
Since $y(\cdot)\in C([0,+\infty);X)$, it follows that
$$
z(\cdot,\cdot)\in C([0,+\infty);L^2(-\tau,0;X)),\;\;
z(\cdot,0)\in C([0,+\infty);X),\;\;
z(\cdot,-\tau)\in L^2_{\mathrm{loc}}(\mathbb{R}^+;X).
$$
Moreover, by \eqref{yu-11-23-1} and \eqref{yu-4-9-2}, one readily verifies that $z(\cdot,\cdot)$ satisfies
\begin{equation*}\label{yu-4-9-4}
\begin{cases}
    \partial_t z(t,\theta)=\partial_{\theta} z(t,\theta),
    & (t,\theta)\in \mathbb{R}^+\times(-\tau,0),\\
    z(t,0)=y(t), & t\ge 0,\\
    z(0,\theta)=\varphi(\theta), & \theta\in(-\tau,0),
\end{cases}
\end{equation*}
and
\begin{equation*}\label{yu-4-9-5}
y_t(t)=Ay(t)+\kappa z(t,-\tau), \;\; t\in\mathbb{R}^+.
\end{equation*}

Let $\mathcal{X}:=X\times L^2(-\tau,0;X)$. Then $\mathcal{X}$ is a complex separable Hilbert space endowed with the inner product
\begin{equation}\label{yu-4-9-6}
\langle (f_1,f_2)^\top,(\varphi_1,\varphi_2)^\top\rangle_{\mathcal{X}}
=\langle f_1,\varphi_1\rangle_X+\langle f_2,\varphi_2\rangle_{L^2(-\tau,0;X)},
\quad (f_1,f_2)^\top,(\varphi_1,\varphi_2)^\top\in\mathcal{X}.
\end{equation}
Since $X=X'$, it follows that $\mathcal{X}=\mathcal{X}'$.

Define the operator family $\mathcal{S}(\cdot):[0,+\infty)\to\mathcal{L}(\mathcal{X})$ by
\begin{equation}\label{yu-4-16-1}
\mathcal{S}(t)
\begin{pmatrix}
y_0\\
\varphi
\end{pmatrix}
:=
\begin{pmatrix}
y(t)\\
z(t,\cdot)
\end{pmatrix},
\;\; t\ge 0.
\end{equation}
The following result is classical (see \cite[Proposition 3.1]{Nakagiri}).
\begin{lemma}\label{yu-lemma-4-16-1}
Suppose that Assumption $(A_1)$ holds. Then the operator family $\mathcal{S}(\cdot)$ defined by \eqref{yu-4-16-1} is an eventually compact semigroup on $\mathcal{X}$, whose generator $\mathcal{A}$ is given by
\begin{equation}\label{yu-4-9-10}
\mathcal{A}(f_1,f_2)^\top :=
\begin{pmatrix}
Af_1 + \kappa f_2(-\tau) \\
\partial_\theta f_2
\end{pmatrix},
\quad (f_1,f_2)^\top \in D(\mathcal{A}),
\end{equation}
with domain
\begin{equation}\label{yu-4-9-11}
D(\mathcal{A})
:= \left\{(f_1,f_2)^\top \in \mathcal{X}:
f_1 \in X_1, f_2 \in H^1(-\tau,0;X), f_2(0)=f_1 \right\}.
\end{equation}
Moreover, $\mathcal{S}(t)$ is compact for all $t>\tau$.
\end{lemma}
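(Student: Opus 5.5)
The statement just above is Lemma \ref{yu-lemma-4-16-1}, which the paper quotes from \cite[Proposition 3.1]{Nakagiri}. We would still reprove it directly in three steps: the semigroup property, identification of the generator, and eventual compactness.

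\emph{Semigroup property.} Fix $(y_0,\varphi)^\top\in\mathcal{X}$ with $u=0$. By variation of constants, the solution of \eqref{yu-11-23-1} satisfies
\[
y(t)=S(t)y_0+\kappa\int_0^t S(t-s)\,y(s-\tau)\,\mathrm ds ,
\]
where $y=\varphi$ on $(-\tau,0)$. We solve this equation step by step on the intervals $[0,\tau]$, $[\tau,2\tau]$, and so on. On $[0,\tau]$ the delayed term is $\varphi$, so $y$ is given explicitly. A Gronwall argument then yields
\[
\|y(t)\|_X+\|z(t,\cdot)\|_{L^2(-\tau,0;X)}\le Ce^{\omega t}\bigl(\|y_0\|_X+\|\varphi\|_{L^2(-\tau,0;X)}\bigr).
\]
Consequently each $\mathcal{S}(t)$ is bounded on $\mathcal{X}$. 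The semigroup law follows from uniqueness, because the solution restarted at time $s$ from $(y(s),z(s,\cdot))$ coincides with the shifted solution. Strong continuity holds because $y$ is continuous and translation is continuous in $L^2$.

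\emph{Generator.} Let $\mathcal{A}$ be the operator defined by \eqref{yu-4-9-10}--\eqref{yu-4-9-11}. The plan is to show that $\lambda I-\mathcal{A}$ is bijective for large real $\lambda$ and that the generator $\mathcal{G}$ of $\mathcal{S}(\cdot)$ extends $\mathcal{A}$. Two operators with a common resolvent point, one extending the other, must coincide.

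For the resolvent, solve $(\lambda I-\mathcal{A})(f_1,f_2)^\top=(g_1,g_2)^\top$. The second component gives
\[
f_2(\theta)=e^{\lambda\theta}f_1+\int_\theta^0 e^{\lambda(\theta-s)}g_2(s)\,\mathrm ds .
\]
Substituting into the first component leads to
\[
\bigl(\lambda I-A-\kappa e^{-\lambda\tau}I\bigr)f_1=g_1+\kappa\int_{-\tau}^0 e^{-\lambda(\tau+s)}g_2(s)\,\mathrm ds .
\]
This equation is uniquely solvable for large $\lambda$, since the perturbation $\kappa e^{-\lambda\tau}$ is small.

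For the inclusion $\mathcal{A}\subset\mathcal{G}$, take data in $D(\mathcal{A})$. These data satisfy the compatibility condition $f_2(0)=f_1$, so the solution is $C^1$ and $\mathcal{S}(t)$ is differentiable at $t=0$ with derivative $\mathcal{A}(f_1,f_2)^\top$.

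\emph{Compactness for $t>\tau$.} This is the main obstacle. Write $\mathcal{S}(t)=\mathcal{S}_0(t)+\mathcal{R}(t)$, where $\mathcal{S}_0$ is the semigroup obtained with $\kappa=0$. For $t>\tau$ the component $z$ of $\mathcal{S}_0(t)$ depends only on $S(\cdot)y_0$ over $[t-\tau,t]$. That map is compact because $S(\cdot)$ is immediately compact, hence norm-continuous for positive times. The difficulty is the perturbation $\mathcal{R}(t)$, which involves integrals of $S(t-s)\kappa y(s-\tau)$.

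We would first try to show compactness of $h\mapsto\int_0^t S(t-s)h(s)\,\mathrm ds$ as a map from $L^2(0,t;X)$ into $C([\delta,t];X)$. This can be done by approximating $S(t-s)$ with $S(\epsilon)S(t-s-\epsilon)$ and applying an Arzel\`a--Ascoli argument, using the norm continuity of $S(\cdot)$ on $(0,\infty)$. Alternatively, for the upper component one can iterate the variation-of-constants formula. For $t>\tau$, every contribution from the initial history $\varphi$ is then smoothed by some $S(\sigma)$ with $\sigma>0$, which makes the map $\varphi\mapsto y(t)$ compact. Once $y$ on $[t-\tau,t]$ is known to depend compactly on the data, compactness of $z(t,\cdot)$ in $L^2(-\tau,0;X)$ follows by the same Ascoli-type argument.
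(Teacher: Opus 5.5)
The paper does not prove this lemma. It quotes it from \cite[Proposition 3.1]{Nakagiri}, i.e.\ it imports it from the general theory of linear retarded systems on $X\times L^2(-\tau,0;X)$. Your argument is a correct, self-contained replacement.

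\emph{Semigroup and generator.} These steps are standard. One small correction in the inclusion $\mathcal{A}\subset\mathcal{G}$: the $C^1$ regularity of $y$ on $[0,\tau]$ comes from $f_1\in X_1$ and $f_2\in H^1(-\tau,0;X)$. The compatibility $f_2(0)=f_1$ is what makes the concatenation of $f_2$ and $y$ an $H^1$ function, so that the history component is differentiable in $L^2$.

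\emph{Compactness.} The heart of your argument is the compactness of $h\mapsto\int_0^{\cdot}S(\cdot-s)h(s)\,\mathrm ds$ from $L^2(0,t;X)$ into $C([0,t];X)$. Your ingredients prove it: the splitting $S(\epsilon)S(t-s-\epsilon)$ with the tail bounded by $M\sqrt{\epsilon}\,\|h\|_{L^2}$, norm continuity of $S(\cdot)$ on $(0,\infty)$, and Arzel\`a--Ascoli. Square integrability is essential here; the statement can fail for $L^1$ data. You should say explicitly that $(y_0,\varphi)\mapsto\kappa y(\cdot-\tau)|_{(0,t)}$ is bounded into $L^2(0,t;X)$, which is your Gronwall bound. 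Then $\mathcal{R}(t)$ is a composition bounded--compact--bounded, the last factor being $w\mapsto(w(t),w(t+\cdot))$ from $C([t-\tau,t];X)$ into $\mathcal{X}$. With that, the alternative iteration and the second Ascoli argument are superfluous.

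\emph{Comparison.} Your route makes visible that the first component of $\mathcal{S}(t)$ is compact for every $t>0$. It also shows that $t>\tau$ is needed only to remove the pure shift of $\varphi$ from the history component. Moreover, it never uses that $\kappa$ is a scalar. So it covers verbatim the variants in which the paper later invokes the lemma ($A_\gamma$ with $\kappa e^{\gamma\tau}$, and $A_{\gamma,2}$ on $H_2$), and even an operator coefficient $E\in\mathcal{L}(X)$. The citation, in turn, spares these routine verifications and places the result in a framework covering more general retarded terms.
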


Since $\mathcal{A}$ generates a $C_0$-semigroup $\mathcal{S}(\cdot)$ on $\mathcal{X}$, it follows from \cite[Theorem 5.3, Chapter 1]{Pazy} that there exists a constant $\gamma=\gamma(\tau,\kappa,A)\in\mathbb{R}^+$ such that
\begin{equation}\label{yu-10-12-bbb-2}
(\rho I-\mathcal{A})^{-1}\in\mathcal{L}(\mathcal{X})
\;\;\mbox{for all}\;\; \rho>\gamma.
\end{equation}

By \eqref{yu-4-9-10} and \eqref{yu-4-9-11}, the adjoint operator $\mathcal{A}^*$ of $\mathcal{A}$ is given by
\begin{equation}\label{yu-4-9-12}
\mathcal{A}^*(\xi_1,\xi_2)^\top :=
\begin{pmatrix}
A^*\xi_1+\xi_2(0) \\
-\partial_\theta \xi_2
\end{pmatrix},
\quad (\xi_1,\xi_2)^\top \in D(\mathcal{A}^*),
\end{equation}
with domain
\begin{equation}\label{yu-4-12-3}
D(\mathcal{A}^*)
:= \left\{(\xi_1,\xi_2)^\top \in \mathcal{X}:
\xi_1\in X_1^*, \xi_2\in H^1(-\tau,0;X), \xi_2(-\tau)=\kappa\xi_1 \right\}.
\end{equation}

Combining Lemma \ref{yu-lemma-4-16-1} with \cite[Corollary 10.6, Chapter 1]{Pazy} and \cite[Theorem 6.4, Chapter 6]{Brezis}, we obtain the following result.

\begin{corollary}\label{wang-11-8-1}
Suppose that Assumption $(A_1)$ holds. Then $\mathcal{A}^*$ with domain $D(\mathcal{A}^*)$ generates an eventually compact semigroup $\mathcal{S}^*(\cdot)$ on $\mathcal{X}$. Moreover,
$$
\mathcal{S}^*(t)=(\mathcal{S}(t))^*\;\;\mbox{for all}\;\; t\ge 0,
$$
and $\mathcal{S}^*(t)$ is compact for all $t>\tau$.
\end{corollary}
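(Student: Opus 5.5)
The statement to prove is Corollary~\ref{wang-11-8-1}. The plan is to combine three ingredients: a general duality fact for $C_0$-semigroups on Hilbert spaces, the identification of $\mathcal{A}^*$ already recorded in \eqref{yu-4-9-12}--\eqref{yu-4-12-3}, and Schauder's theorem on adjoints of compact operators.

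\emph{Step 1: the adjoint semigroup.} By Lemma~\ref{yu-lemma-4-16-1}, $\mathcal{A}$ generates the $C_0$-semigroup $\mathcal{S}(\cdot)$ on the Hilbert space $\mathcal{X}$. Since $\mathcal{X}$ is reflexive, \cite[Corollary 10.6, Chapter 1]{Pazy} applies. It gives that $(\mathcal{S}(t))^*$, $t\ge0$, is a $C_0$-semigroup on $\mathcal{X}$ whose generator is the Hilbert adjoint $\mathcal{A}^*$ of $\mathcal{A}$. We therefore set $\mathcal{S}^*(t):=(\mathcal{S}(t))^*$, and the identity $\mathcal{S}^*(t)=(\mathcal{S}(t))^*$ holds by construction.

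\emph{Step 2: the explicit form of the generator.} We must check that the adjoint of $\mathcal{A}$ is exactly the operator in \eqref{yu-4-9-12} with domain \eqref{yu-4-12-3}. I expect this to be the only step requiring genuine computation, and the main obstacle is the reverse inclusion below.
\begin{itemize}
\item[(a)] First take $(f_1,f_2)^\top\in D(\mathcal{A})$ and $(\xi_1,\xi_2)^\top$ in the proposed domain. Integrating by parts in $\theta$ gives
\begin{equation*}
\langle Af_1+\kappa f_2(-\tau),\xi_1\rangle_X+\int_{-\tau}^0\langle \partial_\theta f_2,\xi_2\rangle_X\,\mathrm d\theta
=\langle f_1,A^*\xi_1\rangle_X+\langle f_2(-\tau),\kappa\xi_1\rangle_X+\langle f_2(0),\xi_2(0)\rangle_X-\langle f_2(-\tau),\xi_2(-\tau)\rangle_X-\int_{-\tau}^0\langle f_2,\partial_\theta\xi_2\rangle_X\,\mathrm d\theta .
\end{equation*}
Here $\kappa\in\mathbb{R}$, so the weight $\kappa$ passes freely between the two arguments of the inner product. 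The boundary condition $\xi_2(-\tau)=\kappa\xi_1$ cancels the two $f_2(-\tau)$ terms, and $f_2(0)=f_1$ turns the remaining boundary term into $\langle f_1,\xi_2(0)\rangle_X$. This proves that the proposed domain is contained in $D(\mathcal{A}^*)$ and that $\mathcal{A}^*$ acts on it by \eqref{yu-4-9-12}.
\item[(b)] For the reverse inclusion, take $(\xi_1,\xi_2)^\top\in D(\mathcal{A}^*)$ and test against suitable elements of $D(\mathcal{A})$.
  \begin{itemize}
  \item[1.] Testing with $f_1=0$ and $f_2\in C_c^\infty((-\tau,0);X)$ shows that $\xi_2\in H^1(-\tau,0;X)$ and that the second component of $\mathcal{A}^*(\xi_1,\xi_2)^\top$ is $-\partial_\theta\xi_2$.
  \item[2.] Testing with $f_1=0$ and $f_2$ supported near $-\tau$, vanishing at $0$, with $f_2(-\tau)$ arbitrary, yields $\xi_2(-\tau)=\kappa\xi_1$.
  \item[3.] Finally, testing with $f_1\in X_1$ arbitrary and $f_2(\theta)=\eta(\theta)f_1$, where $\eta$ is a cutoff with $\eta(0)=1$ supported near $0$, shows that $f_1\mapsto\langle Af_1,\xi_1\rangle_X$ is bounded. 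Hence $\xi_1\in D(A^*)=X_1^*$, and the first component of $\mathcal{A}^*(\xi_1,\xi_2)^\top$ is $A^*\xi_1+\xi_2(0)$.
  \end{itemize}
\end{itemize}

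\emph{Step 3: compactness.} Lemma~\ref{yu-lemma-4-16-1} states that $\mathcal{S}(t)$ is compact for every $t>\tau$. By Schauder's theorem \cite[Theorem 6.4, Chapter 6]{Brezis}, the adjoint of a compact operator is compact, so $\mathcal{S}^*(t)=(\mathcal{S}(t))^*$ is compact for every $t>\tau$. In particular $\mathcal{S}^*(\cdot)$ is eventually compact, which completes the proof.
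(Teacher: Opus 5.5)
Your proposal is correct and follows the same route as the paper, which obtains the corollary by combining Lemma~\ref{yu-lemma-4-16-1} with Pazy's duality result for $C_0$-semigroups on reflexive spaces (Corollary 10.6, Chapter 1) and Schauder's theorem (Brezis, Theorem 6.4, Chapter 6). The only addition is that you verify the explicit form \eqref{yu-4-9-12}--\eqref{yu-4-12-3} of $\mathcal{A}^*$, which the paper simply asserts before the corollary, and your integration-by-parts and test-function argument for that identification is sound.
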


Fix $\rho_1>\gamma$, where $\gamma\in\mathbb{R}$ is given by \eqref{yu-10-12-bbb-2}.
Let $\mathcal{X}_{-1}$ be the completion of $\mathcal{X}$ with respect to the norm
$\|f\|_{-1}:=\|(\rho_1I-\mathcal{A})^{-1}f\|_{\mathcal{X}}$ ($f\in \mathcal{X}$),
and define
\begin{equation}\label{yu-4-16-12}
\mathcal{B}:=(B,0)^\top.
\end{equation}
Following the argument in $(iii)$ of Remark \ref{yu-remark-8-00-1}, we define $\widetilde{\mathcal{A}}\in\mathcal{L}(\mathcal{X};\mathcal{X}_{-1})$ as the unique extension of $\mathcal{A}$ characterized by
\begin{equation}\label{yu-5-4-3}
\langle \widetilde{\mathcal{A}}\varphi,\psi\rangle_{\mathcal{X}_{-1},\mathcal{X}_1^*}
=\langle \varphi,\mathcal{A}^*\psi\rangle_{\mathcal{X}},
\quad \forall\, \varphi\in\mathcal{X},\;\psi\in\mathcal{X}_1^*.
\end{equation}
Here and in the following, $\mathcal{X}_1^*$ denotes the space $D(\mathcal{A}^*)$ endowed with the norm
 $\|(\rho_1I-\mathcal{A}^*)f\|_{\mathcal{X}}$ ($f\in D(\mathcal{A}^*)$).
Moreover, by arguments similar to those in $(ii)$ and $(iii)$ of Remark \ref{yu-remark-8-00-1}, the space $\mathcal{X}_1^*$ is the dual space of $\mathcal{X}_{-1}$ with respect to the pivot space $\mathcal{X}$ and the corresponding duality pairing is denoted by
$\langle\cdot,\cdot,\rangle_{\mathcal{X}_{-1},\mathcal{X}_1^*}$.
Furthermore, the operator $\widetilde{\mathcal{A}}$ with domain $\mathcal{X}$ generates an eventually compact $C_0$-semigroup $\widetilde{\mathcal{S}}(\cdot)$ on $\mathcal{X}_{-1}$, given by
$$
\widetilde{\mathcal{S}}(t):=(\rho_1 I-\widetilde{\mathcal{A}})\,\mathcal{S}(t)\,(\rho_1 I-\widetilde{\mathcal{A}})^{-1},
\;\; t\ge 0.
$$

The next two lemmas establish the admissibility of the control operator $\mathcal{B}$.
\begin{lemma}\label{yu-lemma-10-13-1}
    Suppose that Assumptions $(A_1)$ and $(A_2)$ hold. Then
    $\mathcal{B}\in\mathcal{L}(U;\mathcal{X}_{-1})$.
\end{lemma}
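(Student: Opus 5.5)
Since $\mathcal{X}_{-1}$ is the dual of $\mathcal{X}_1^*$ with respect to the pivot space $\mathcal{X}$, showing $\mathcal{B}\in\mathcal{L}(U;\mathcal{X}_{-1})$ is equivalent to showing that the functional $\psi\mapsto\langle Bu,\psi_1\rangle_{X_{-1},X_1^*}$ is bounded on $\mathcal{X}_1^*$. More precisely, we aim for
\begin{equation*}
|\langle Bu,\psi_1\rangle_{X_{-1},X_1^*}|\le C\|u\|_U\|(\rho_1I-\mathcal{A}^*)\psi\|_{\mathcal{X}}\;\;\mbox{for all}\;\;u\in U,\;\psi=(\psi_1,\psi_2)^\top\in D(\mathcal{A}^*).
\end{equation*}
We then define $\mathcal{B}u$ as the element of $\mathcal{X}_{-1}$ given by this functional; it is the natural meaning of $(B,0)^\top$. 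By $(A_2)$ and $(iii)$ of Remark \ref{yu-remark-8-00-1}, the left-hand side is bounded by $\|B\|_{\mathcal{L}(U;X_{-1})}\|u\|_U\|(\rho_0I-A^*)\psi_1\|_X$. It therefore suffices to prove
\begin{equation*}
\|(\rho_0I-A^*)\psi_1\|_X\le C\|(\rho_1I-\mathcal{A}^*)\psi\|_{\mathcal{X}}\;\;\mbox{for all}\;\;\psi\in D(\mathcal{A}^*),
\end{equation*}
that is, the first component of $(\rho_1I-\mathcal{A}^*)^{-1}$ maps $\mathcal{X}$ boundedly into $X_1^*$.

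To prove this, fix $g=(g_1,g_2)^\top\in\mathcal{X}$ and let $\psi=(\rho_1I-\mathcal{A}^*)^{-1}g$. This inverse exists and is bounded because $\rho_1>\gamma$ and $\rho(\mathcal{A}^*)=\overline{\rho(\mathcal{A})}$. By \eqref{yu-4-9-12} the equation for $\psi$ reads
\begin{equation*}
\rho_1\psi_1-A^*\psi_1-\psi_2(0)=g_1,\qquad \rho_1\psi_2+\partial_\theta\psi_2=g_2.
\end{equation*}
The bounded inverse already gives $\|\psi_1\|_X+\|\psi_2\|_{L^2(-\tau,0;X)}\le C\|g\|_{\mathcal{X}}$. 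Next I will control the trace $\psi_2(0)$. The ODE together with the boundary condition $\psi_2(-\tau)=\kappa\psi_1$ gives the explicit formula
\begin{equation*}
\psi_2(\theta)=e^{-\rho_1(\theta+\tau)}\kappa\psi_1+\int_{-\tau}^\theta e^{-\rho_1(\theta-s)}g_2(s)\,\mathrm ds.
\end{equation*}
Hence $\|\psi_2(0)\|_X\le C(\|\psi_1\|_X+\|g_2\|_{L^2})\le C\|g\|_{\mathcal{X}}$, using Cauchy--Schwarz. Alternatively, the equation shows $\psi_2\in H^1$ with norm controlled by $\|g\|_{\mathcal{X}}$, and the trace embedding $H^1(-\tau,0;X)\hookrightarrow C([-\tau,0];X)$ gives the same bound.

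Finally, $(\rho_0I-A^*)\psi_1=g_1+\psi_2(0)+(\rho_0-\rho_1)\psi_1$. Every term on the right is bounded by $C\|g\|_{\mathcal{X}}$, which gives the desired estimate. The main point needing care is the trace bound for $\psi_2(0)$; this is exactly where the coupling through the boundary condition $\psi_2(-\tau)=\kappa\psi_1$ enters, and the explicit variation-of-constants formula handles it. A minor technical point is identifying the abstract $\mathcal{X}_{-1}$ with the dual of $\mathcal{X}_1^*$ so that the functional defines an element of $\mathcal{X}_{-1}$. This identification was asserted just before the lemma, so I will take it as given.
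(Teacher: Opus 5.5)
Your proposal is correct and follows essentially the paper's route. Both arguments reduce the claim, via the duality between $\mathcal{X}_{-1}$ and $\mathcal{X}_1^*$, to the estimate $\|(\rho_0I-A^*)\psi_1\|_X\le C\|(\rho_1I-\mathcal{A}^*)\psi\|_{\mathcal{X}}$ combined with $(A_2)$; the paper phrases the duality step through the Riesz representation of $(\rho_1 I-\widetilde{\mathcal{A}})^{-1}\mathcal{B}g$. Your variation-of-constants bound on the trace $\psi_2(0)$ supplies the justification for that estimate, which the paper only attributes to the boundedness of the resolvent.
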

\begin{proof}
Since $(\rho_1 I-\widetilde{\mathcal{A}})^{-1}$ is an isometric isomorphism from $\mathcal{X}_{-1}$ to $\mathcal{X}$ (see \cite[Proposition 2.10.3, Chapter 2]{Tucsnak-Weiss}), it suffices to prove that
$(\rho_1 I-\widetilde{\mathcal{A}})^{-1}\mathcal{B}\in \mathcal{L}(U;\mathcal{X})$.
Let $g\in U$. By \eqref{yu-4-16-12} and Assumption $(A_2)$, $\mathcal{B}g=(Bg,0)^\top\in \mathcal{X}_{-1}$. For any $f\in\mathcal{X}$, let $\psi^f:=(\psi^f_1,\psi^f_2)^\top\in \mathcal{X}_1^*$ be the unique solution to
$(\rho_1 I-\mathcal{A}^*)\psi^f=f$.
Then, by \eqref{yu-10-12-bbb-2}, there exists a constant $C>0$ such that
$\|\psi_1^f\|_{X_1^*}+\|\psi_2^f\|_{H^1(-\tau,0;X)}\le C\|f\|_{\mathcal{X}}$.
This, together with Assumption $(A_2)$, yields that
$$
|\langle Bg,\psi_1^f\rangle_{X_{-1},X_1^*}|
\le C\|g\|_U\|f\|_{\mathcal{X}}\;\;\mbox{for all}\;\; f\in\mathcal{X}.
$$
Hence, by the Riesz representation theorem, there exists a unique $\varphi\in\mathcal{X}$ such that
$\langle \varphi,f\rangle_{\mathcal{X}}=\langle Bg,\psi_1^f\rangle_{X_{-1},X_1^*}$ for all $f\in \mathcal{X}$,
and  $\|\varphi\|_{\mathcal{X}}\le C\|g\|_U$.
By \eqref{yu-5-4-3} and the definition of $\psi^f$, we deduce that
$
\varphi=(\rho_1 I-\widetilde{\mathcal{A}})^{-1}\mathcal{B}g$.
Thus $(\rho_1 I-\widetilde{\mathcal{A}})^{-1}\mathcal{B}\in \mathcal{L}(U;\mathcal{X})$, and the proof is complete.
\end{proof}

\begin{lemma}\label{yu-lemma-10-13-2}
    Suppose that Assumptions $(A_1)$-$(A_3)$ hold. Then, for each $T>0$, there exists a constant $C(T)>0$ such that
\begin{equation}\label{yu-10-8-1}
    \int_0^T\|\mathcal{B}^*\mathcal{S}^*(t)\varphi\|_{U}^2\mathrm dt
    \leq C(T)\|\varphi\|^2_{\mathcal{X}}\;\;\mbox{for all}\;\;\varphi\in \mathcal{X}^*_1.
\end{equation}
\end{lemma}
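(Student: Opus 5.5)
Since $\mathcal{B}=(B,0)^\top$, the adjoint is $\mathcal{B}^*(\xi_1,\xi_2)^\top=B^*\xi_1$ for $(\xi_1,\xi_2)^\top\in\mathcal{X}_1^*$. Thus \eqref{yu-10-8-1} reduces to an estimate for the first component of the adjoint delayed semigroup. Write $\mathcal{S}^*(t)\varphi=(\xi_1(t),\xi_2(t))^\top$. For $\varphi\in\mathcal{X}_1^*=D(\mathcal{A}^*)$, the pair is a classical solution of the adjoint system
\begin{equation*}
\xi_1'(t)=A^*\xi_1(t)+\xi_2(t,0),\qquad \partial_t\xi_2=-\partial_\theta\xi_2,\qquad \xi_2(t,-\tau)=\kappa\xi_1(t).
\end{equation*}
The transport equation can be solved explicitly along characteristics. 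For $\theta\in(-\tau,0)$ we have $\xi_2(t,\theta)=\kappa\xi_1(t-\theta-\tau)$ when $t-\theta-\tau\ge0$, and $\xi_2(t,\theta)=\varphi_2(\theta-t)$ otherwise. In particular,
\begin{equation*}
g(t):=\xi_2(t,0)=\kappa\xi_1(t-\tau)\mathbf{1}_{t\ge\tau}+\varphi_2(-t)\mathbf{1}_{t<\tau}.
\end{equation*}
Hence $\xi_1$ satisfies the mild (Duhamel) formula
\begin{equation*}
\xi_1(t)=S^*(t)\varphi_1+\int_0^tS^*(t-s)g(s)\,\mathrm ds.
\end{equation*}
Applying $B^*$ to this formula, the first term is controlled by $(A_3)$ (valid for every $T$ by $(iv)$ of Remark \ref{yu-remark-8-00-1}), which gives $\int_0^T\|B^*S^*(t)\varphi_1\|^2_U\,\mathrm dt\le C(T)\|\varphi_1\|_X^2$.

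For the convolution term I will use the standard dual-admissibility fact: if $B^*$ is an admissible observation operator for $S^*(\cdot)$, then the map $g\mapsto B^*\int_0^\cdot S^*(\cdot-s)g(s)\,\mathrm ds$ is bounded from $L^2(0,T;X)$ (indeed from $L^1(0,T;X)$) into $L^2(0,T;U)$. Concretely, I will justify it with Minkowski's integral inequality:
\begin{equation*}
\Big\|B^*\!\int_0^\cdot S^*(\cdot-s)g(s)\,\mathrm ds\Big\|_{L^2(0,T;U)}\le\int_0^T\|B^*S^*(\cdot)g(s)\|_{L^2(0,T-s;U)}\,\mathrm ds\le C(T)^{1/2}\|g\|_{L^1(0,T;X)}.
\end{equation*}
This is first done for $g$ smooth with values in $D(A^*)$ and then extended by density. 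It then remains to bound $\|g\|_{L^2(0,T;X)}$ by $\|\varphi\|_{\mathcal{X}}$. The piece $\varphi_2(-t)$ for $t<\tau$ contributes at most $\|\varphi_2\|_{L^2(-\tau,0;X)}$. The piece $\kappa\xi_1(t-\tau)$ contributes at most $|\kappa|\,\|\xi_1\|_{L^2(0,T;X)}\le |\kappa|\,T^{1/2}\sup_{[0,T]}\|\mathcal{S}^*(t)\|\,\|\varphi\|_{\mathcal{X}}$, using that $\mathcal{S}^*(\cdot)$ is a $C_0$-semigroup (Corollary \ref{wang-11-8-1}).

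The main technical obstacle is rigor about regularity. Pointwise application of $B^*$ to $\xi_1(t)$ needs $\xi_1(t)\in X_1^*$, which holds for $\varphi\in D(\mathcal{A}^*)$ since then $\mathcal{S}^*(t)\varphi\in D(\mathcal{A}^*)$. Splitting $B^*\xi_1$ into the two Duhamel pieces, however, requires each piece separately to make sense, and the convolution term need not take values in $X_1^*$ pointwise. I would handle this by working with $\Lambda$-extensions or via resolvent regularization. Alternatively, and more cleanly, I would avoid the splitting altogether with a step-by-step (method of steps) argument on intervals of length $\tau$, approximating $g$ in $L^2(0,T;X)$ by functions in $C^1([0,T];X_1^*)$, for which the Duhamel integral lies in $X_1^*$. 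The estimate above then passes to the limit, since both sides are continuous in $\varphi$ for the $\mathcal{X}$-norm. This yields \eqref{yu-10-8-1} with a constant depending on $T,\tau,\kappa$.
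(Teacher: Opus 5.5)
Your argument is essentially the paper's. You write the first component of $\mathcal{S}^*(t)\varphi$ via the Duhamel formula driven by the outflow $\xi_2(t,0)$, apply $(A_3)$ to the free term and to the convolution, and justify the computation by regularization. The only structural difference is that the paper first reduces to $T<\tau$ (admissibility on one interval implies it on every interval), so the forcing is just $\varphi_2(-s)$. You instead keep arbitrary $T$ and control the delayed term $\kappa\xi_1(t-\tau)$ by the $C_0$-bound on $\mathcal{S}^*$. Both work, and your Minkowski argument makes explicit what the paper leaves implicit in ``by $(A_3)$''.

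The density step, however, is not correct as you phrase it.
\begin{itemize}
\item The left-hand side of \eqref{yu-10-8-1} is not known to be continuous in $\varphi$ for the $\mathcal{X}$-norm; that is what the lemma asserts.
\item Approximating $g$ merely in $L^2(0,T;X)$ only makes the Duhamel integrals converge in $X$. There $B^*$ is unbounded and in general not closable (think of a boundary trace). So the $L^2(0,T;U)$-limit of $B^*$ applied to the approximations cannot be identified with $B^*$ of the true convolution.
\end{itemize}
Incidentally, for $\varphi\in D(\mathcal{A}^*)$ the convolution term does lie in $X_1^*$, being $\xi_1(t)-S^*(t)\varphi_1$. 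What actually fails is moving $B^*$ inside the integral when $g(s)\notin X_1^*$, which your Minkowski bound requires.

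The remedy is to regularize $\varphi$ itself in the graph norm, using $\mathcal{R}_n=\mathrm{diag}(n(nI-A^*)^{-1},n(nI-A^*)^{-1})$.
\begin{itemize}
\item The condition $\xi_2(-\tau)=\kappa\xi_1$ is preserved, so $\mathcal{R}_n\varphi\in\mathcal{X}_1^*$.
\item The associated $g$ lies in $L^2(0,T;X_1^*)$, so $B^*$ passes inside the integral and your estimate holds rigorously.
\item $\mathcal{R}_n\varphi\to\varphi$ in $\mathcal{X}_1^*$. Both sides then converge, because $\mathcal{B}^*\in\mathcal{L}(\mathcal{X}_1^*;U)$ and $\mathcal{S}^*(t)$ commutes with $\mathcal{A}^*$, hence is uniformly bounded on $\mathcal{X}_1^*$ for $t\in[0,T]$.
\end{itemize}
This is exactly Step 2 of the paper's proof.
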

\begin{proof}
It suffices to prove \eqref{yu-10-8-1} for $T\in(0,\tau)$ (see \cite[Proposition 4.3.2]{Tucsnak-Weiss}). Fix such a $T$.

\vskip 5pt
\emph{Step 1.} Let $\varphi=(\varphi_1,\varphi_2)^\top\in \mathcal{X}_1^*$ with $\varphi_2(\theta)\in X_1^*$ for a.e. $\theta\in(-\tau,0)$. Denote $(\xi(t),\eta(t,\cdot))^\top=\mathcal{S}^*(t)\varphi$. Then $\xi$ satisfies
$$
\xi(t)=e^{A^*t}\varphi_1+\int_0^t e^{A^*(t-s)}\varphi_2(-s)\mathrm ds,\quad t\in[0,T].
$$
By Assumption $(A_3)$ and $(iv)$ of Remark \ref{yu-remark-8-00-1}, we obtain
$$
\int_0^T\|B^*\xi(t)\|_U^2\mathrm dt
\le C(T)\bigl(\|\varphi_1\|_X^2+\|\varphi_2\|_{L^2(-\tau,0;X)}^2\bigr)
= C(T)\|\varphi\|_{\mathcal{X}}^2.
$$
This, together with \eqref{yu-4-16-12}, yields \eqref{yu-10-8-1} for such $\varphi$.

\vskip 5pt
\emph{Step 2.} For general $\varphi\in \mathcal{X}_1^*$, take $\mathcal{R}_n=\mathrm{diag}(n(nI-A^*)^{-1},\,n(nI-A^*)^{-1})$. Then $\mathcal{R}_n\varphi\in \mathcal{X}_1^*$ and $[\mathcal{R}_n\varphi]_2(\theta)\in X_1^*$ for a.e. $\theta\in(-\tau,0)$. By \emph{Step 1},
$\int_0^T\|\mathcal{B}^*\mathcal{S}^*(t)\mathcal{R}_n\varphi\|_U^2\mathrm dt
\le C(T)\|\mathcal{R}_n\varphi\|_{\mathcal{X}}^2$.
Since $\mathcal{R}_n\varphi\to\varphi$ in $\mathcal{X}_1^*$ (see \cite[Theorem 3.1 and Lemma 3.2, Chapter 1]{Pazy}), a standard density argument yields \eqref{yu-10-8-1} for all $\varphi\in \mathcal{X}_1^*$.
\end{proof}

By Lemmas~\ref{yu-lemma-4-16-1}, \ref{yu-lemma-10-13-1} and \ref{yu-lemma-10-13-2}, the pair $(\mathcal{A},\mathcal{B})$ fits into the abstract framework studied in \cite{Kunisch-Wang-Yu}.
Consequently, for each $Y_0\in\mathcal{X}$ and $u\in L^2_{loc}(\mathbb{R}^+;U)$, the system
\begin{equation}\label{yu-10-14-2}
\begin{cases}
    Y_t(t)=\mathcal{A}Y(t)+\mathcal{B}u(t), & t\in\mathbb{R}^+,\\
    Y(0)=Y_0,
\end{cases}
\end{equation}
admits a unique mild solution $Y(\cdot;Y_0,u)\in C([0,+\infty);\mathcal{X})$.

The following proposition clarifies the relationship between \eqref{yu-11-23-1} and \eqref{yu-10-14-2}.
Although this result is well known, we include a proof for completeness.
\begin{proposition}\label{yu-proposition-10-14-1}
     Suppose that Assumptions $(A_1)$-$(A_3)$ hold. Let $(y_0,\varphi,u)\in X\times L^2(-\tau,0;X)\times L_{loc}^2(\mathbb{R}^+;U)$.
    The following statements hold:
\begin{enumerate}
     \item [$(i)$] If $y(\cdot;y_0,\varphi,u)$
         is the solution to \eqref{yu-11-23-1}, then
                            $Y(\cdot):=(y_1(\cdot),y_2(\cdot))^{\top}$ is the solution to
  \eqref{yu-10-14-2} with $Y_0:=(y_0,\varphi)^{\top}$, where $y_1(t):=y(t;y_0,\varphi,u)$
  and $[y_2(t)](\cdot):=y(t+\cdot;y_0,\varphi,u)$ for all $t\in [0,+\infty)$.
    \item [$(ii)$] If $Y(\cdot;Y_0,u)=(y_1(\cdot;Y_0,u),y_2(\cdot;Y_0,u))^\top$ (with
     $Y_0:=(y_0,\varphi)^\top$)
          is the solution to \eqref{yu-10-14-2}, then the function $y(\cdot)$ defined by
\begin{equation}\label{yu-12-20-8}
    y(t):=
\begin{cases}
    y_1(t;Y_0,u),&t\in [0,+\infty),\\
    \varphi(t),&t\in(-\tau,0),
\end{cases}
\end{equation}
   is the solution to \eqref{yu-11-23-1}. Moreover,
$[y_2(t;Y_0,u)](\theta)=y(t+\theta)$ for all $t\in [0,+\infty)$ and a.e. $\theta\in
  (-\tau,0)$.
  \end{enumerate}
\end{proposition}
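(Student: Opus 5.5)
The plan is to compare both solutions with one reference object, namely a weak (duality) formulation tested against $\mathcal{X}_1^*$. Both \eqref{yu-11-23-1} and \eqref{yu-10-14-2} have unique solutions: by $(v)$ of Remark \ref{yu-remark-8-00-1}, and by the well-posedness of \eqref{yu-10-14-2} coming from Lemmas \ref{yu-lemma-10-13-1}--\ref{yu-lemma-10-13-2}. So it suffices to show that one candidate solves the other problem; uniqueness then gives the rest.

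\emph{Part (i).} I would first treat smooth data: $u\in C^1_c(\mathbb{R}^+;U)$, $y_0$ and $\varphi$ regular enough, and $B$ replaced by $B_n:=n(nI-\widetilde A)^{-1}B\in\mathcal{L}(U;X)$ if needed. Put $Y(t)=(y(t),y(t+\cdot))^\top$. For $\psi=(\psi_1,\psi_2)^\top\in D(\mathcal{A}^*)$ I would compute $\frac{d}{dt}\langle Y(t),\psi\rangle_{\mathcal{X}}$. The first component gives
\[
\langle y(t),A^*\psi_1\rangle+\kappa\langle y(t-\tau),\psi_1\rangle+\langle u(t),B^*\psi_1\rangle .
\]
The second component gives $\int_{-\tau}^0\langle \partial_\theta y(t+\theta),\psi_2(\theta)\rangle d\theta$. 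Integrating this by parts yields
\[
\langle y(t),\psi_2(0)\rangle-\langle y(t-\tau),\psi_2(-\tau)\rangle-\int\langle y(t+\theta),\partial_\theta\psi_2\rangle d\theta .
\]
The boundary condition $\psi_2(-\tau)=\kappa\psi_1$ from \eqref{yu-4-12-3} cancels the delay term. What remains is exactly
\[
\langle Y(t),\mathcal{A}^*\psi\rangle_{\mathcal{X}}+\langle u(t),\mathcal{B}^*\psi\rangle_U ,
\]
by \eqref{yu-4-9-12} and \eqref{yu-4-16-12}. This is the weak formulation of \eqref{yu-10-14-2}, and weak solutions coincide with mild ones in the admissible framework of \cite{Tucsnak-Weiss}. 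Hence $Y=Y(\cdot;Y_0,u)$ for smooth data.

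For general data I would pass to the limit by density. The map $(y_0,\varphi,u)\mapsto y\in C([0,T];X)$ is continuous; this continuity follows from the variation of constants formula together with $(iv)$ of Remark \ref{yu-remark-8-00-1}, and a Gronwall argument on the intervals $[k\tau,(k+1)\tau]$ handles the delay. On the other side, $(Y_0,u)\mapsto Y(\cdot;Y_0,u)\in C([0,T];\mathcal{X})$ is continuous by admissibility (Lemma \ref{yu-lemma-10-13-2}). The map $y\mapsto (y(t),y(t+\cdot))$ is also continuous into $C([0,T];\mathcal{X})$. The identity therefore extends to all data.

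\emph{Part (ii).} Given the solution $Y$ of \eqref{yu-10-14-2}, let $\tilde y$ be the solution of \eqref{yu-11-23-1} with the same data. By (i), $(\tilde y(t),\tilde y(t+\cdot))^\top=Y(t;Y_0,u)$. Hence $y_1=\tilde y$ on $[0,+\infty)$, and therefore $y$ defined by \eqref{yu-12-20-8} equals $\tilde y$. Moreover $[y_2(t)](\theta)=\tilde y(t+\theta)=y(t+\theta)$.

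\emph{Main obstacle.} The delicate step is the smooth-data computation in (i) when $B$ is unbounded. One must make sure that $y(t+\cdot)$ lies in $H^1(-\tau,0;X)$, or at least that the pairing makes sense, and that the $X_{-1}$--$X_1^*$ pairing for $B$ is legitimate. I would handle this by doing the computation with bounded $B_n$ and regularized $\varphi\in H^1$ with $\varphi(0)=y_0$. In that setting everything is classical and the weak identity holds. I would then let $n\to\infty$, using the uniform admissibility estimates for $B_n$ that come from \eqref{yu-5-4-2}.
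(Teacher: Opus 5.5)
Your proof is correct, but it is organized differently from the paper's in both parts.

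For (i), the paper does the same duality computation against $\xi\in\mathcal{X}_1^*$, but directly on the given solution, with no regularization. The change of variables $\langle y_2(t),\xi_2\rangle_{L^2(-\tau,0;X)}=\int_{t-\tau}^{t}\langle y(s),\xi_2(s-t)\rangle_X\,\mathrm{d}s$ moves the time derivative onto $\xi_2\in H^1(-\tau,0;X)$. So no $H^1$-regularity of $y(t+\cdot)$ is needed. The term $\langle Bu(t),\xi_1\rangle_{X_{-1},X_1^*}$ also makes sense as it stands, because $\xi_1\in X_1^*$. The resulting identity is then identified with the mild solution via Ball's theorem. Thus the ``main obstacle'' you single out does not really arise. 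Your approximation layer is sound but costs considerably more than it buys. It requires bounded $B_n$, compatible data $y_0\in X_1$, $\varphi\in H^1$ with $\varphi(0)=y_0$, continuity of both data-to-solution maps, uniform admissibility of the $B_n$, and the limit $\mathcal{B}_n\to\mathcal{B}$ tested on $\mathcal{X}_1^*$.

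For (ii), the paper argues directly. Testing with $(0,\xi_2)^\top$ gives the transport problem $\partial_t z_2=\partial_\theta z_2$, $z_2(t,0)=z_1(t)$, $z_2(0,\cdot)=\varphi$. Hence $z_2(t,\theta)=z_1(t+\theta)$, and substituting back yields the weak form of \eqref{yu-11-23-1}. Your route is to take the solution $\tilde y$ of \eqref{yu-11-23-1}, lift it via (i), and invoke uniqueness for \eqref{yu-10-14-2}. This is shorter and perfectly valid here, since existence for \eqref{yu-11-23-1} is supplied by $(v)$ of Remark \ref{yu-remark-8-00-1}. The paper's direct argument has the advantage of not relying on that independent existence result: (ii) by itself manufactures a solution of the delay equation from the semigroup $\mathcal{S}(\cdot)$, which is the point of the state-space approach.
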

\begin{proof}
We first prove $(i)$. Since $y(\cdot;y_0,\varphi,u)\in C([0,+\infty);X)$ and coincides with $\varphi$ on $(-\tau,0)$, it follows that $Y(0)=Y_0$ and $Y(\cdot)\in C([0,+\infty);\mathcal{X})$.
Let $\xi=(\xi_1,\xi_2)^\top\in \mathcal{X}_1^*$ be arbitrary. On the one hand, by the equation satisfied by $y(\cdot;y_0,\varphi,u)$, we have
$$
\frac{d}{dt}\langle y_1(t),\xi_1\rangle_X
=\langle y_1(t),A^*\xi_1\rangle_X+\langle y_1(t-\tau),\kappa\xi_1\rangle_X
+\langle Bu(t),\xi_1\rangle_{X_{-1},X_1^*}.
$$
On the other hand, by a change of variables and integration by parts,
$$
\frac{d}{dt}\langle y_2(t),\xi_2\rangle_{L^2(-\tau,0;X)}
=-\langle y_2(t),\partial_\theta\xi_2\rangle_{L^2(-\tau,0;X)}
+\langle y_1(t),\xi_2(0)\rangle_X-\langle y_1(t-\tau),\xi_2(-\tau)\rangle_X.
$$
Using $\xi_2(-\tau)=\kappa\xi_1$ (see \eqref{yu-4-12-3}), we obtain
$
\frac{d}{dt}\langle Y(t),\xi\rangle_{\mathcal{X}}
=\langle Y(t),\mathcal{A}^*\xi\rangle_{\mathcal{X}}
+\langle Bu(t),\xi_1\rangle_{X_{-1},X_1^*}$.
However, by Lemma \ref{yu-lemma-10-13-1}, we have
$\langle Bu(t),\xi_1\rangle_{X_{-1},X_1^*}=\langle \mathcal{B}u(t),\xi\rangle_{\mathcal{X}_{-1},\mathcal{X}_1^*}$.
Hence by the main theorem in \cite{Ball}, $Y(\cdot)$ is the (unique) mild solution to \eqref{yu-10-14-2}.

\par
 We now show $(ii)$. Let $Y(\cdot;Y_0,u)$ be the solution to \eqref{yu-10-14-2} and define $y(\cdot)$ by \eqref{yu-12-20-8}. Then, it is clear that $y(0)=y_0$ and $y=\varphi$ on $(-\tau,0)$.
Denote $z_1(t)=y(t)$ and $z_2(t,\theta)=[y_2(t;Y_0,u)](\theta)$. Arguing as in the proof of $(i)$, for any $\xi=(\xi_1,\xi_2)^\top\in \mathcal{X}_1^*$,
\begin{eqnarray*}
&\;&\frac{d}{dt}\langle z_1(t),\xi_1\rangle_X
+\frac{d}{dt}\langle z_2(t),\xi_2\rangle_{L^2(-\tau,0;X)}\nonumber\\
&=&\langle z_1(t),A^*\xi_1+\xi_2(0)\rangle_X
-\langle z_2(t),\partial_\theta\xi_2\rangle_{L^2(-\tau,0;X)}
+\langle Bu(t),\xi_1\rangle_{X_{-1},X_1^*}.
\end{eqnarray*}
Choosing test functions of the form $(0,\xi_2^*)^\top$ and using standard arguments for transport equations (see \cite[Section 2.3.3.1, Chapter 2]{Coron}), we deduce that $z_2$ satisfies
$$
\partial_t z_2=\partial_\theta z_2,\quad z_2(t,0)=z_1(t),\quad z_2(0,\theta)=\varphi(\theta), \;\;\theta\in(-\tau,0),
$$
which yields $z_2(t,\theta)=z_1(t+\theta)$ for a.e. $\theta\in(-\tau,0)$.
Substituting this relation into the previous identity gives
$$
\frac{d}{dt}\langle y(t),\xi_1\rangle_X
=\langle y(t),A^*\xi_1\rangle_X+\langle \kappa y(t-\tau),\xi_1\rangle_X
+\langle Bu(t),\xi_1\rangle_{X_{-1},X_1^*}.
$$
Hence $y(\cdot)$ solves \eqref{yu-11-23-1}. The relation
$[y_2(t;Y_0,u)](\theta)=y(t+\theta)$ for a.e. $\theta\in(-\tau,0)$
follows immediately.
The proof is complete.
\end{proof}

  At the end of this section, we present a frequency-domain characterization of the rapid stabilizability of system \eqref{yu-10-14-2}. This result follows from Lemmas \ref{yu-lemma-4-16-1}, \ref{yu-lemma-10-13-1}, \ref{yu-lemma-10-13-2},  and \cite[Lemma 3.1]{Ma-Trelat-Wang-Yu}.
 \begin{proposition}\label{yu-lemma-1-12-1}
    Suppose that Assumptions $(A_1)$-$(A_3)$ hold. Then the following statements are equivalent:
\begin{enumerate}
  \item [$(i)$] System \eqref{yu-10-14-2} is rapidly stabilizable.
  \item [$(ii)$] For each $\alpha>0$, there exists a constant $C(\alpha)>0$ such that
\begin{equation*}\label{yu-1-4-11}
    \|\varphi\|^2_{\mathcal{X}}\leq C(\alpha)(\|(\lambda I-\mathcal{A}^*)\varphi\|_{\mathcal{X}}^2
    +\|\mathcal{B}^*\varphi\|_U^2)
    \;\;\mbox{for all}\;\;\lambda\in\mathbb{C}_{-\alpha}^+\;\;\mbox{and}\;\;\varphi\in \mathcal{X}_{1}^*.
\end{equation*}
\end{enumerate}
   \end{proposition}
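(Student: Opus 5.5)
This is a direct transfer of \cite[Lemma 3.1]{Ma-Trelat-Wang-Yu} to the pair $(\mathcal{A},\mathcal{B})$. The plan is to check that $(\mathcal{A},\mathcal{B})$ satisfies every hypothesis of that lemma, and then read off the equivalence. That lemma asserts the equivalence between rapid stabilizability and the uniform Hautus-type resolvent inequality on every half-plane $\mathbb{C}^+_{-\alpha}$. Its standing hypotheses are three:
\begin{enumerate}
\item[(a)] the state operator generates a $C_0$-semigroup on a Hilbert space satisfying the almost exponential decay condition (AEDC);
\item[(b)] the control operator is bounded into the extrapolation space;
\item[(c)] the control operator is admissible.
\end{enumerate}

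First, I take the state space $\mathcal{X}$ with the semigroup $\mathcal{S}(\cdot)$ generated by $\mathcal{A}$. Hypothesis (b) is exactly Lemma \ref{yu-lemma-10-13-1}, namely $\mathcal{B}\in\mathcal{L}(U;\mathcal{X}_{-1})$. Hypothesis (c), in the dual form $\int_0^T\|\mathcal{B}^*\mathcal{S}^*(t)\varphi\|_U^2\,\mathrm dt\le C(T)\|\varphi\|_{\mathcal{X}}^2$, is exactly Lemma \ref{yu-lemma-10-13-2}. Together these give well-posedness of \eqref{yu-10-14-2} in $C([0,+\infty);\mathcal{X})$, as already noted after Lemma \ref{yu-lemma-10-13-2}.

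The remaining point is the AEDC for $\mathcal{S}(\cdot)$. In Remark \ref{yu-remark-8-00-1}(i) the AEDC is deduced from compactness of $S(t)$ and $S^*(t)$ for all $t>0$. Here we only have eventual compactness: by Lemma \ref{yu-lemma-4-16-1} and Corollary \wang-11-8-1, $\mathcal{S}(t)$ and $\mathcal{S}^*(t)$ are compact for $t>\tau$. I expect this to be the only genuine obstacle. I would handle it through the spectral picture that eventual compactness provides.
\begin{itemize}
\item For every $\alpha>0$, the set $\sigma(\mathcal{A})\cap\mathbb{C}^+_{-\alpha}$ is a finite set of eigenvalues of finite algebraic multiplicity, since $\sigma(\mathcal{S}(t))\setminus\{0\}$ consists of eigenvalues accumulating only at $0$ and the spectral mapping theorem holds for eventually compact semigroups.
\item The Riesz projection $P_\alpha$ onto the corresponding generalized eigenspace splits $\mathcal{X}=P_\alpha\mathcal{X}\oplus(I-P_\alpha)\mathcal{X}$. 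The first summand is finite-dimensional and invariant.
\item On the complementary invariant subspace the growth bound is at most $-\alpha$. This holds because eventually norm-continuous semigroups (in particular eventually compact ones) have growth bound equal to spectral bound.
\end{itemize}
This decomposition, valid for every $\alpha$, is precisely the content of the AEDC as used in \cite{Ma-Trelat-Wang-Yu}. If the lemma there is stated only for immediately compact semigroups, I would instead repeat its argument, which only uses the finite-dimensional/stable splitting just described.

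With all hypotheses verified, I apply \cite[Lemma 3.1]{Ma-Trelat-Wang-Yu} to $(\mathcal{A},\mathcal{B})$ for each fixed $\alpha>0$. It states that $\alpha$-stabilizability (in the sense of Definition \ref{yu-definition-10-18-1}, applied to $\mathcal{A},\mathcal{B}$) follows from, and implies, a resolvent estimate
\[
\|\varphi\|_{\mathcal{X}}^2\le C(\alpha)\bigl(\|(\lambda I-\mathcal{A}^*)\varphi\|_{\mathcal{X}}^2+\|\mathcal{B}^*\varphi\|_U^2\bigr)
\]
on $\mathbb{C}^+_{-\alpha'}$ for some nearby rate $\alpha'$; any loss in the rate is harmless because we quantify over all $\alpha$. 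Taking all $\alpha>0$ gives $(i)\Leftrightarrow(ii)$.
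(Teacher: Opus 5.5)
Your proposal is correct and follows the same route as the paper, which simply invokes \cite[Lemma 3.1]{Ma-Trelat-Wang-Yu} with the hypotheses supplied by Lemmas \ref{yu-lemma-4-16-1}, \ref{yu-lemma-10-13-1} and \ref{yu-lemma-10-13-2}. You also check explicitly that eventual (rather than immediate) compactness of $\mathcal{S}(\cdot)$ still yields the finite-dimensional/exponentially stable splitting behind the AEDC; this is a useful detail that the paper leaves implicit.
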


\section{Proof of main theorem}\label{yu-section-4}
    We start this section with a lemma that plays a key role in the proof of Theorem \ref{yu-theorem-9-01-1}.

\begin{lemma}\label{yu-lemma-11-29-1}
For each $\kappa\in\mathbb{R}$ and $\tau>0$, one has
\begin{equation*}
\mathbb{C}=\{z-\kappa e^{-z\tau}:z\in\mathbb{C}\}.
\end{equation*}
\end{lemma}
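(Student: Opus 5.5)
The function $f(z):=z-\kappa e^{-z\tau}$ is entire, and the task is to show it attains every complex value. The case $\kappa=0$ is immediate, since then $f(z)=z$. For $\kappa\neq0$ we fix $w\in\mathbb{C}$ and consider
\[
g(z):=f(z)-w=z-w-\kappa e^{-z\tau},
\]
which we must show has a zero. The natural tool is Picard's little theorem: a nonconstant entire function omits at most one complex value.

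The first approach is to argue by contradiction. Suppose $g$ has no zero. Then $g=e^{h}$ for some entire $h$. Since $g$ has order one (it is a linear polynomial plus an exponential of order one), Hadamard's theorem forces $h$ to be a polynomial of degree at most one. Hence $z-w-\kappa e^{-z\tau}=ce^{az}$ for some constants $c\neq0$ and $a\in\mathbb{C}$. This gives
\[
z-w=\kappa e^{-\tau z}+ce^{az}.
\]
We then invoke the linear independence of the functions $z$, $1$, $e^{-\tau z}$, $e^{az}$ over $\mathbb{C}$, which holds for distinct exponents and also covers the degenerate cases $a=0$ or $a=-\tau$. Comparing coefficients, the coefficient of $z$ on the left is $1$ while on the right it is $0$, a contradiction. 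The one point to check carefully is that the exponential-polynomial family $\{z^k e^{\lambda z}\}$ is linearly independent; this is standard.

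An alternative avoids Picard and Hadamard entirely by using the argument principle, or Rouché's theorem, on large contours. Here we compare $g$ with $z-w$ in a half-plane $\mathrm{Re}\,z\ge R$, where $|\kappa e^{-z\tau}|\le|\kappa|e^{-R\tau}$ is small. Take a disk centered at $w+\kappa e^{-\tau\,\mathrm{Re}\,w}$-ish, or simply a disk $D=\{|z-z_0|<1\}$ with $z_0$ chosen far to the right. More precisely, write $w$ and set $z=w+\zeta$. If $\mathrm{Re}\,w$ is large, then $|\kappa e^{-(w+\zeta)\tau}|<1=|\zeta|$ on $|\zeta|=1$, so Rouché gives a zero. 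For general $w$ this fails, which is why the Picard/Hadamard route is cleaner.

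We will therefore present the Picard/Hadamard contradiction argument. The main obstacle is justifying that the zero-free $g$ equals $e^{\text{linear}}$, which follows from the order estimate $|g(z)|\le C e^{C|z|}$ together with Hadamard factorization.
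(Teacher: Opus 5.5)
Your argument is correct, but it takes a different route from the paper. Writing $\eta$ for the target value (your $w$), the paper substitutes $\xi=1/(\tau(z-\eta))$. This turns the equation $z-\kappa e^{-z\tau}=\eta$ into $\xi^{-1}e^{\xi^{-1}}=\kappa\tau e^{-\tau\eta}\neq 0$. It then applies the \emph{Big} Picard theorem to $\xi\mapsto\xi^{-1}e^{\xi^{-1}}$ at its essential singularity $\xi=0$: the value $0$ is never attained, so it is the unique exceptional value, and every nonzero value is attained. You instead work with $g(z)=z-w-\kappa e^{-z\tau}$ directly. A zero-free entire function of order at most one is $e^{az+b}$ (by Hadamard, or Borel--Carath\'eodory applied to $\log g$). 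Linear independence of the monomials $z^k e^{\lambda z}$ then gives a contradiction from the coefficient of $z$. As you correctly note, that coefficient is unaffected when terms merge for $a=0$ or $a=-\tau$. The paper's proof is shorter once Big Picard is granted; yours replaces it with the factorization theorem plus an elementary independence check, and needs no change of variables. Your final argument uses no Picard theorem at all, so the label ``Picard/Hadamard'' is misleading. If you want a proof from the little Picard theorem alone, use $f(z+2\pi i/\tau)=f(z)+2\pi i/\tau$. An omitted value $w$ then forces $w+2\pi i/\tau$ to be omitted as well, contradicting little Picard for the nonconstant entire function $f$. You can drop the Rouch\'e paragraph, since, as you say, it only handles $w$ with large real part.
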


\begin{proof}
Fix $\kappa\in\mathbb{R}$ and $\tau>0$.
Define the entire function:
$$
F(z):=z-\kappa e^{-z\tau},\;\; z\in\mathbb{C}.
$$
 It suffices to show that $F(\cdot)$ is surjective.
\par
When $\kappa=0$, it is trivial. When $\kappa\neq 0$,
let $\eta\in\mathbb{C}$ be arbitrarily fixed. We need to show the existence of $z\in\mathbb{C}$ such that
$F(z)=\eta$, i.e.,
\begin{equation}\label{yu-26-6-24-1}
(z-\eta)e^{z\tau}=\kappa.
\end{equation}
Since $\kappa\neq 0$, it is clear that $z=\eta$ does not satisfy the equation \eqref{yu-26-6-24-1}. Therefore, for $z\in\mathbb{C}\setminus\{\eta\}$, we set
$w:=\frac{1}{\tau(z-\eta)}$ and observe that \eqref{yu-26-6-24-1} is equivalent to
\begin{equation}\label{yu-26-6-21-1}
w^{-1}e^{w^{-1}}=\kappa\tau e^{-\tau\eta}.
\end{equation}
\par
    Define
\begin{equation}\label{yu-26-6-21-2}
    G(w):=  w^{-1}e^{w^{-1}},\;\;w\in\mathbb{C}\setminus\{0\}.
\end{equation}
Then \eqref{yu-26-6-21-1} can be rewritten as
$$
G(w)=\kappa\tau e^{-\tau\eta}.
$$
   Since $G(\cdot)$ is holomorphic on $\mathbb{C}\setminus\{0\}$ and $0$ is the essential singularity of $G(\cdot)$, it follows from the Big Picard Theorem (see \cite[Theorem 3, Chapter 4]{Narasimhan-Nievergrlt}) that $G(\mathbb{C}\setminus\{0\})$ contains all complex numbers with at most one exception.
Note that  $0\notin G(\mathbb{C}\setminus\{0\})$. Thus, $0$ is the only exception of $G(\cdot)$ and $\mathbb{C}\setminus\{0\}=G(\mathbb{C}\setminus\{0\})$, which, together with \eqref{yu-26-6-21-2} and the assumption $\kappa\neq 0$, implies the existence of $w\in\mathbb{C}\setminus\{0\}$ satisfying \eqref{yu-26-6-21-1}.
 This completes the proof.
\end{proof}

    We now proceed to prove Theorem \ref{yu-theorem-9-01-1}.
\begin{proof}[Proof of Theorem \ref{yu-theorem-9-01-1}]
    The proof is divided into three steps. To facilitate tracking the dependence and interplay of the various constants arising in the estimates, we label these constants systematically throughout the proof.
\vskip 5pt
    \emph{Step 1. $(i)\Rightarrow (iii)$.} Since system $[A,B]$ is rapidly stabilizable, it follows from  Remark \ref{yu-remark-25-9-22} that for each $\gamma>0$, there exists a constant $C_1(\gamma)>0$ such that
  \begin{equation}\label{yu-9-11-1}
    \|\varphi\|_{X}^{2}\leq C_1(\gamma)(\|(\lambda I-A^*)\varphi\|^2_{X}+\|B^*\varphi\|^2_{U})
    \;\;\mbox{for all}\;\;\lambda\in \mathbb{C}_{-\gamma}^+\;\;\mbox{and}\;\;\varphi\in X_1^*.
  \end{equation}
     Fix $\gamma>0$ and $\alpha>0$ arbitrarily. Let $y_0\in X$, $\varphi\in L^2(-\tau,0;X)$ and $u\in L^2(\mathbb{R}^+;U)$
     be given. We denote by $y(\cdot;y_0,\varphi,u,\gamma)$ the solution of  the following equation:
\begin{equation*}\label{yu-11-24-2}
\begin{cases}
    y_t(t)=(A+\gamma I)y(t)+\kappa e^{\gamma\tau}y(t-\tau)+Bu(t),&t\in\mathbb{R}^+,\\
    y(0)=y_0,\\
    y(t)=\varphi(t),&t\in(-\tau,0).
\end{cases}
\end{equation*}
     Then $y(\cdot;y_0,\varphi,u,\gamma)\in C([0,+\infty);X)$ (see
     $(v)$ in Remark \ref{yu-remark-8-00-1}) and satisfies
  \begin{equation*}\label{yu-11-24-1}
  \begin{cases}
  y(t;y_0,\varphi,u,\gamma)=S_\gamma(t)y_0+\kappa e^{\gamma\tau}\int_0^tS_\gamma(t-s)y(s-\tau)\mathrm ds
  +\int_0^t\widetilde{S}_\gamma(t-s)Bu(s)\mathrm ds ,&t\in [0,+\infty),\\
  y(t;y_0,\varphi,u,\gamma)=\varphi(t),&t\in(-\tau,0).
  \end{cases}
  \end{equation*}
     Here and in the remainder of the proof, we set $S_\gamma(\cdot):=e^{\gamma\cdot}S(\cdot)$, which is the $C_0$-semigroup on $X$ generated by $A_\gamma:=A+\gamma I$ with domain $X_1$, and  $\widetilde{S}_\gamma(\cdot):=e^{\gamma\cdot}\widetilde{S}(\cdot)$, which is the extension of $S_\gamma(\cdot)$ from $X$ to $X_{-1}$ (see $(iii)$ in Remark \ref{yu-remark-8-00-1}). Let $\beta$ be a constant satisfying
\begin{equation}\label{yu-9-20-bbb-1}
    \beta>2(\alpha+|\kappa| e^{(\gamma+\alpha)\tau}).
\end{equation}
 By $(A_1)$, the operator $A_\gamma$ generates an immediately compact semigroup on $X$. Hence, by \cite[Lemma 6.2]{Ma-Trelat-Wang-Yu},
 there exists a projection operator $P_\beta\in \mathcal{L}(X)$
    such that: $(b_1)$ $P_\beta X^*_1\subset X^*_1$ and $A^*_\gamma P_\beta=P_\beta A^*_\gamma$;
  $(b_2)$ $\sigma((P_\beta A^*_\gamma)|_{P_\beta X})\subset \mathbb{C}_{-\beta}^+$;
  $(b_3)$ $P_\beta X$ is finite dimensional;
  $(b_4)$  for each $\delta\in(0,\beta)$, there exists a constant $C_2(\beta,\delta)>0$ such that $\|(I-P_\beta)S^*_\gamma(t)\|_{\mathcal{L}(X)}\leq C_2(\beta,\delta)e^{-\delta t}$ for all $t\in\mathbb{R}^+$.

    We now present several important facts derived from the operator $P_\beta$ and conditions $(b_1)$-$(b_4)$ that will be used in the proof.
These facts enable us to decompose the system into a finite-dimensional unstable component and an exponentially stable infinite-dimensional component. This decomposition is fundamental for the construction of a stabilizing feedback law.
\par
  Firstly, define $Q_1:=P_\beta X$, $Q_2:=(I-P_\beta)X$,  $S^*_{\gamma,1}(\cdot):=P_\beta S^*_\gamma(\cdot)$, and $S^*_{\gamma,2}(\cdot):=(I-P_\beta)S^*_\gamma(\cdot)$. By $(b_1)$-$(b_4)$
  and the arguments in \cite[Lemma 6.3]{Ma-Trelat-Wang-Yu}, the following statements hold: $(c_1)$ $Q_1\subset X_1^*$ and is finite-dimensional;
      $(c_2)$ $A^*_\gamma|_{Q_1}$ (the restriction of $A^*_\gamma$ to $Q_1$) is bounded and
      $\sigma(A^*_\gamma|_{Q_1})\subset\mathbb{C}_{-\beta}^+$;
      $(c_3)$ $Q_1$ and $Q_2$ are  invariant subspaces of $S^*_\gamma(\cdot)$.
   By the compactness of $S^*_\gamma(\cdot)$,
  it follows that $S^*_{\gamma,1}(\cdot)$ and $S^*_{\gamma,2}(\cdot)$  are  immediately compact semigroups on $Q_1$ and $Q_2$, respectively. Moreover, from $(c_1)$, we deduce that
\begin{equation}\label{yu-11-24-4}
    P_\beta X^*_1=Q_1.
\end{equation}
 Let $A_{\gamma,1}^*$  and $A_{\gamma,2}^*$  be the generators of $S^*_{\gamma,1}(\cdot)$ on $Q_1$ and $S^*_{\gamma,2}(\cdot)$ on $Q_2$, respectively. We claim that
\begin{equation}\label{yu-11-24-5}
\begin{cases}
    A^*_{\gamma,1}=P_\beta A^*_\gamma\;\;\mbox{in}\;\;Q_1\;\;\mbox{with}
    \;\;D(A^*_{\gamma,1})=Q_1,\\
    A^*_{\gamma,2}=(I-P_\beta)A^*_\gamma\;\;\mbox{in}\;\;Q_2\;\;\mbox{with}
    \;\;D(A^*_{\gamma,2})=(I-P_\beta)X^*_1.
\end{cases}
\end{equation}
    On the one hand, let $f\in (I-P_\beta)X^*_1$, i.e., $f=(I-P_\beta)g$ for some $g\in X^*_1$. Then by $(b_1)$,
\begin{equation}\label{yu-11-11-4}
    \lim_{t\to 0^+}t^{-1}(S_{\gamma,2}^*(t)f-f)=(I-P_\beta)\lim_{t\to 0^+}
    t^{-1}(S^*_\gamma(t)g-g)=(I-P_\beta)A_\gamma^*g\in X,
\end{equation}
   which shows that $f\in D(A^*_{\gamma,2})$. Hence, $(I-P_\beta)X_1^*\subset D(A^*_{\gamma,2})$.
    On the other hand, let $f\in D(A^*_{\gamma,2})$. Then $f=(I-P_\beta)f\in Q_2$. Using $(b_1)$ and $(c_3)$, we have
\begin{equation*}\label{yu-11-11-5}
    \lim_{t\to 0^+}t^{-1}[S^*_\gamma(t)(I-P_\beta)f-(I-P_\beta)f]
    =\lim_{t\to0^+}t^{-1}[(I-P_\beta)S^*_\gamma(t)f-f]=\lim_{t\to0^+}t^{-1}(S^*_{\gamma,2}(t)f-f)=A^*_{\gamma,2}f\in X.
\end{equation*}
    This implies that $f=(I-P_\beta)f\in X_1^*$, and hence $D(A_{\gamma,2}^*)\subset (I-P_\beta)X^*_1$. Therefore, $D(A^*_{\gamma,2})=(I-P_\beta)X^*_1$. This, along with
    \eqref{yu-11-11-4}, $(b_1)$ and the fact $(I-P_\beta)^2=I-P_\beta$, yields that $A^*_{\gamma,2}=(I-P_\beta)A_\gamma^*$.
   By analogous arguments and \eqref{yu-11-24-4}, we obtain that $A^*_{\gamma,1}=P_\beta A^*_\gamma$ and $D(A_{\gamma,1}^*)=Q_1$.
   Hence \eqref{yu-11-24-5} follows.
\par
   Secondly, define $H_1:=P_\beta^*X$, $H_2:=(I-P_\beta^*)X$, $S_{\gamma,1}(\cdot):=P^*_\beta S_\gamma(\cdot)$, and $S_{\gamma,2}(\cdot):=(I-P^*_\beta)S_\gamma(\cdot)$, where $P^*_\beta$ denotes the adjoint of $P_\beta$. It is straightforward to verify that $P^*_\beta$ is a projection operator and that
   \begin{equation}\label{yu-1-6-20}
    S_{\gamma,1}(\cdot)=(S_{\gamma,1}^*(\cdot))^*
    \;\;\mbox{and}\;\;S_{\gamma,2}(\cdot)=(S_{\gamma,2}^*(\cdot))^*.
   \end{equation}
   We claim that
\begin{equation}\label{yu-1-6-21}
    H_1=Q_1'\;\;\mbox{and}\;\;H_2=Q_2'
\end{equation}
    in the following sense: for $i\in\{1,2\}$ and every bounded linear functional $F_i$ on $Q_i$, there exists a unique $f_i\in H_i$ such
   that $F_i(\varphi)=\langle f_i,\varphi\rangle_X$ for all $\varphi\in Q_i$.
    Indeed, let $F_1$ be a bounded linear functional on $Q_1$. Then $F_1\circ P_\beta$ defines a bounded linear functional on $X$.
    Since $X'=X$, there exists a unique $f_1\in X$ such that $F_1(P_\beta \varphi)=\langle f_1, \varphi\rangle_X$ for all $\varphi\in X$.
     Noting that  $F_1(P_\beta \varphi)=F_1(P_\beta(P_\beta \varphi))$, we deduce that $\langle f_1,\varphi\rangle_X=\langle f_1,P_\beta \varphi\rangle_X=\langle P^*_\beta f_1,\varphi\rangle_X$ for all $\varphi\in X$. Hence, $f_1=P^*_\beta f_1\in H_1$,
      which shows  that $H_1=Q_1'$. The identity for $H_2$ can be established in the same way.
\par

Thirdly, by the compactness of $S^*_{\gamma,1}(\cdot)$ on $Q_1$ and $S^*_{\gamma,2}(\cdot)$ on $Q_2$, \eqref{yu-1-6-20} and \eqref{yu-1-6-21}, we deduce that $S_{\gamma,1}(\cdot)$ and $S_{\gamma,2}(\cdot)$ are compact semigroups on $H_1$ and $H_2$, respectively. Since $Q_1$ is finite-dimensional (see $(c_1)$ above), it follows from  \eqref{yu-1-6-21} that
\begin{equation}\label{yu-12-18-1}
    \mbox{Dim}(H_1)=\mbox{Dim}(Q_1)=N<+\infty.
\end{equation}
  We claim that
\begin{equation}\label{yu-11-24-6}
    H_1\subset X_1\;\;\mbox{and}\;\;P^*_\beta X_1=H_1.
\end{equation}
   To this end, by $(b_1)$ and the first identity in \eqref{yu-1-6-20}, \cite[Theorem 5.3, Chapter 1]{Pazy}, and by applying the Laplace transform to $S_\gamma(\cdot)P_\beta^*\varphi$ ($\varphi\in X$), there exists a constant $\omega>0$ such that $\mathbb{C}_\omega^+\subset
    \rho(A_\gamma)$ and
\begin{equation}\label{yu-1-6-22}
    P_\beta^*(\lambda I-A_\gamma)^{-1}=(\lambda I-A_\gamma)^{-1}P_\beta^*
    \;\;\mbox{for all}\;\;\lambda\in \mathbb{C}_\omega^+.
\end{equation}
   Fix $\lambda_0\in \mathbb{C}_\omega^+$. By \eqref{yu-12-18-1}, let $\{e_j\}_{j=1}^N$  be an orthogonal basis of $H_1$. Define $f_j:= (\lambda_0I-A_\gamma)^{-1}e_j$ for each $j\in\{1,2,\ldots,N\}$.
   Then by \eqref{yu-1-6-22}, we have
  $\{f_j\}_{j=1}^N$ is also a basis of $H_1$. Consequently, $H_1\subset X_1$, which implies $H_1\subset P_\beta^*X_1$. Since the reverse inclusion $ P_\beta^*X_1\subset H_1$ is immediate, we obtain \eqref{yu-11-24-6}.
\par
    Finally, let $A_{\gamma,1}$  and $A_{\gamma,2}$  be the generators of $S_{\gamma,1}(\cdot)$ on $H_1$ and  $S_{\gamma,2}(\cdot)$ on $H_2$, respectively.
    By the second identity in \eqref{yu-1-6-20}, \eqref{yu-11-24-6} and the arguments analogous to those used in the proof of \eqref{yu-11-24-5}, we obtain
 \begin{equation*}
\begin{cases}
    A_{\gamma,1}=P^*_\beta A_\gamma\;\;\mbox{in}\;\;H_1\;\;\mbox{with}\;\;D(A_{\gamma,1})=H_1,\\
    A_{\gamma,2}=(I-P_\beta^*)A_\gamma\;\;\mbox{in}\;\;H_2
    \;\;\mbox{with}\;\;D(A_{\gamma,2})=(I-P_\beta^*)X_1.
\end{cases}
\end{equation*}
\par
Define $w(\cdot;u):=P^*_{\beta}y(\cdot;y_0,\varphi,u,\gamma)$
    and $v(\cdot;u):=(I-P^*_{\beta})y(\cdot;y_0,\varphi,u,\gamma)$. Introduce
\begin{equation}\label{yu-12-19-1}
    \mathcal{P}:=\mbox{diag}(P^*_\beta,P^*_\beta).
\end{equation}
    Let $\mathcal{H}_1:=H_1\times L^2(-\tau,0;H_1)$  and $\mathcal{H}_2:=H_2\times L^2(-\tau,0;H_2)$. Then $\mathcal{P}$ is a projection operator from $\mathcal{X}:=X\times L^2(-\tau,0;X)$ onto $\mathcal{H}_1$, $\mathcal{X}= \mathcal{H}_1\oplus \mathcal{H}_2$, and  $(I-\mathcal{P})\mathcal{X}=\mathcal{H}_2$. Here and in the sequel, the inner product on  $\mathcal{X}$ is defined by
    \eqref{yu-4-9-6}.
\par
    The remainder of this step is divided into the following three substeps.
\vskip 5pt
    \emph{Sub-step 1.1. We show that there exists an eventually compact semigroup $\mathcal{S}_{\gamma,2}(\cdot)$ on $\mathcal{H}_2$ and a constant $C_3(\alpha)>0$ such that
    $(v(t;0),v(t+\cdot;0))^\top=\mathcal{S}_{\gamma,2}(t)
    (I-\mathcal{P})(y_0,\varphi)^\top$ and
\begin{equation}\label{yu-9-18-1}
    \|\mathcal{S}_{\gamma,2}(t)(I-\mathcal{P})(y_0,\varphi)^\top
    \|_{\mathcal{X}}\leq C_3(\alpha)e^{-\alpha t}(\|y_0\|^2_{X}
    +\|\varphi\|^2_{L^2(-\tau,0;X)})^{\frac{1}{2}}\;\;\mbox{for all}\;\;t\in\mathbb{R}^+.
\end{equation}}
Indeed, by the definition of $v(\cdot;\cdot)$, it is clear that $v(\cdot;0)$ is the solution of the equation (in $H_2$):
\begin{equation*}\label{yu-9-13-11}
\begin{cases}
    v_t(t)=A_{\gamma,2} v(t)+\kappa e^{\gamma\tau}v(t-\tau),&t\in\mathbb{R}^+,\\
    v(0)=(I-P^*_{\beta})y_0,\\
    v(t)=(I-P^*_{\beta})\varphi(t),&t\in(-\tau,0).
\end{cases}
\end{equation*}
    On $\mathcal{H}_2$, define
\begin{equation*}\label{yu-9-18-2}
    \mathcal{A}_{\gamma,2}(f_1,f_2)^\top:=\left(
                                 \begin{array}{c}
                                   A_{\gamma,2}f_1 + \kappa e^{\gamma\tau}f_2(-\tau) \\
                                   \partial_\theta f_2 \\
                                 \end{array}
                               \right),
                               \;\;(f_1,f_2)^{\top}\in D(\mathcal{A}_{\gamma,2}),
\end{equation*}
    where
$D(\mathcal{A}_{\gamma,2}):=\{(f_1,f_2)^\top \in \mathcal{H}_2:f_1\in D(A_{\gamma,2}),
    f_2\in H^1(-\tau,0;H_2), f_2(0)=f_1\}$.
    Since $A_{\gamma,2}$ generates an immediately compact semigroup $S_{\gamma,2}(\cdot)$ on $H_2$, it follows from Lemma \ref{yu-lemma-4-16-1} that
    $\mathcal{A}_{\gamma,2}$ with domain $D(\mathcal{A}_{\gamma,2})$ generates an eventually compact semigroup
    $\mathcal{S}_{\gamma,2}(\cdot)$ on $\mathcal{H}_2$. Let
    $V(t):=(v(t;0),v(t+\cdot;0))$ for $t\in \mathbb{R}^+$. By Proposition \ref{yu-proposition-10-14-1}, we have
    $V(t)=\mathcal{S}_{\gamma,2}(t)((I-P^*_\beta) y_0,(I-P^*_\beta)\varphi)^\top$ for all $t\in\mathbb{R}^+$.
    We next show that
\begin{equation}\label{yu-9-18-4}
    \sigma(\mathcal{A}_{\gamma,2})\subset \mathbb{C}_{-\alpha}^-.
\end{equation}
Once \eqref{yu-9-18-4} is established, it follows from \cite[Corollary 3.12, Chapter 4 and Corollary 3.2, Chapter 5]{Engel-Nagel} that $\sup\{\mbox{Re}\lambda:\lambda\in \sigma(\mathcal{A}_{\gamma,2})\}<-\alpha$ and there exists a constant $C_4(\alpha)>0$ such that
\begin{equation*}\label{yu-9-20-2}
    \|V(t)\|_{\mathcal{X}}=\|V(t)\|_{\mathcal{H}_2}\leq C_4(\alpha)e^{-\alpha t}\|((I-P^*_\beta) y_0,(I-P^*_\beta)\varphi)^\top\|_{\mathcal{X}}
    \;\;\mbox{for all}\;\;t\in\mathbb{R}^+,
\end{equation*}
which yields \eqref{yu-9-18-1} with $C_3(\alpha):=C_4(\alpha)$. Observe that \eqref{yu-9-18-4} is equivalent to
\begin{equation}\label{wang-10-29-3}
(\lambda I-\mathcal{A}_{\gamma,2})\eta\not=0\;\;\mbox{for all}\;\;\lambda\in \overline{\mathbb{C}_{-\alpha}^+}\;\;\mbox{and a non-zero vector}\;\;
\eta:=(\eta_1,\eta_2)^\top\in D(\mathcal{A}_{\gamma,2}).
\end{equation}
 Thus, we only need to show \eqref{wang-10-29-3}. We prove \eqref{wang-10-29-3} by contradiction. Suppose that  there exist  $\lambda\in \overline{\mathbb{C}_{-\alpha}^+}$ and a non-zero vector $\eta:=(\eta_1,\eta_2)^\top\in D(\mathcal{A}_{\gamma,2})$ such that $(\lambda I-\mathcal{A}_{\gamma,2})\eta=0$. Then
\begin{equation*}
    (\lambda I-A_{\gamma,2})\eta_1-\kappa e^{\gamma\tau}\eta_2(-\tau)=0,\;\;
\partial_\theta\eta_2(\theta)=\lambda\eta_2(\theta),\;\;\theta\in(-\tau,0),\;\;
\eta_2(0)=\eta_1.
\end{equation*}
These imply that $\eta_2(\theta)=e^{\lambda\theta}\eta_1$ for each $\theta\in [-\tau,0]$ and
\begin{equation}\label{yu-9-19-4}
    [(\lambda-\kappa e^{(\gamma-\lambda)\tau})I-A_{\gamma,2}]\eta_1=0.
\end{equation}
By \eqref{yu-9-20-bbb-1}, we have
 $\zeta-\kappa e^{(\gamma-\zeta)\tau}\in \mathbb{C}_{-\frac{\beta}{2}}^+$ for all $\zeta\in \overline{\mathbb{C}_{-\alpha}^+}$,
which, combined with the second equality in (\ref{yu-1-6-20}) and $(b_4)$ above, indicates that
 the equation \eqref{yu-9-19-4} has only zero solution. Hence $\eta=0$, which is a contradiction.

\vskip 5pt
    \emph{Sub-step 1.2. We prove that for any $\mu>0$, there exist  $F_\mu\in \mathcal{L}(H_1;U)$ and $C_5(\mu)>0$  such that the solution $w_{F_\mu}(\cdot)$ of the following equation (in $H_1$):
\begin{equation}\label{yu-9-23-1}
\begin{cases}
    w_t(t)=\left\{A_{\gamma,1}+[(\rho_0+\gamma)I-A_{\gamma,1}]P^*_\beta
    (\rho_0I-\widetilde{A})^{-1}BF_\mu\right\}w(t)+\kappa e^{\gamma\tau}w(t-\tau),&t\in\mathbb{R}^+,\\
    w(0)=P^*_\beta y_0,\\
    w(t)=P^*_{\beta}\varphi(t),&t\in(-\tau,0),
\end{cases}
\end{equation}
    satisfies
\begin{equation}\label{yu-9-23-2}
    \|(w_{F_\mu}(t),w_{F_\mu}(t+\cdot))^\top\|_{\mathcal{X}}\leq C_5(\mu)e^{-\mu t}(\|y_0\|^2_{X}
    +\|\varphi\|^2_{L^2(-\tau,0;X)})^{\frac{1}{2}}\;\;\mbox{for all}\;\;t\in\mathbb{R}^+.
\end{equation}}
To this end, fix $\mu>0$ and choose $\zeta>0$ such that
\begin{equation}\label{yu-11-24-10}
\zeta>2(\mu+|\kappa| e^{(\gamma+\mu)\tau}).
\end{equation}
 On the one hand, by $(c_2)$ and \eqref{yu-11-24-5}, we have
    $\sigma(A^*_{\gamma,1})\subset \mathbb{C}_{-\beta}^+$. It then follows from \eqref{yu-12-18-1} and  \cite[Remark 5.4 in Chapter 1]{Pazy} that
    there exists a constant $C_6(\beta)>0$ such that
\begin{equation}\label{yu-1-7-1}
    \|(\lambda I-A^*_{\gamma,1})^{-1}\|_{\mathcal{L}(Q_1)}\leq C_6(\beta)\;\;\mbox{for all}\;\;\lambda\in\mathbb{C}_{-2\beta}^-.
\end{equation}
    Moreover, by $(b_1)$, $(c_1)$, \eqref{yu-9-11-1} and \eqref{yu-11-24-5}, we obtain that there exists $C_7(\beta)>0$ such
\begin{equation*}
    \|\varphi\|_{Q_1}^{2}\leq C_7(\beta)(\|(\lambda I-A^*_{\gamma,1})\varphi\|^2_{Q_1}+\|B^*P_\beta\varphi \|^2_{U})
    \;\;\mbox{for all}\;\;\lambda\in \mathbb{C}_{-4\beta}^+\;\;\mbox{and}\;\;\varphi\in Q_1.
\end{equation*}
    Combining this with \eqref{yu-1-7-1}, we deduce that
\begin{equation}\label{yu-1-7-3}
    \|\varphi\|_{Q_1}^{2}\leq C_8(\beta)(\|(\lambda I-A^*_{\gamma,1})\varphi\|^2_{Q_1}+\|B^*P_\beta\varphi \|^2_{U})
    \;\;\mbox{for all}\;\;\lambda\in \mathbb{C}\;\;\mbox{and}\;\;\varphi\in Q_1
\end{equation}
for some constant $C_8(\beta)>0$. On the other hand, by $(iii)$ in Remark \ref{yu-remark-8-00-1}, we have $B^*P_\beta= B^*(\rho_0 I-A^*)^{-1}[(\rho_0+\gamma)I-A^*_\gamma]P_\beta$.
Together with $(c_2)$, this implies $B^*P_\beta\in\mathcal{L}(Q_1;U)$. Since $Q_1$ is finite-dimensional (see \eqref{yu-12-18-1}), there exists a linear isomorphism $\Phi$ from $Q_1$ to $\mathbb{C}^N$ such that $\widehat{A}^*_{\gamma,1}:=\Phi A^*_{\gamma,1}\Phi^{-1}\in \mathbb{C}^{N\times N}$ and
    $\widehat{B}^*_1:=(B^*P_\beta)\Phi^{-1}\in\mathcal{L}(\mathbb{C}^N;U)$. Then
\eqref{yu-1-7-3} yields that there exists $C_9(\beta)>0$ such that
$\|z\|_{\mathbb{C}^N}^{2}\leq C_9(\beta) (\|(\lambda I-\widehat{A}^*_{\gamma,1})z\|^2_{\mathbb{C}^N}+\|\widehat{B}^*_1z \|^2_{U})$
    for all $\lambda\in \mathbb{C}\;\;\mbox{and}\;\;z\in \mathbb{C}^N$.
 By this inequality and  Remark \ref{yu-remark-25-9-22}, there exist $K\in\mathcal{L}(\mathbb{C}^N;U)$ and $C_{10}(\zeta,K)>0$ such that the $C_0$-semigroup $\widehat{S}_{\gamma,1,K}(\cdot)$ generated by $(\widehat{A}^*_{\gamma,1})^\top +(\widehat{B}^*_1)^*K$ satisfies
\begin{equation}\label{yu-1-7-5}
    \|\widehat{S}_{\gamma,1,K}(t)\|_{\mathcal{L}(\mathbb{C}^N)}
  \leq C_{10}(\zeta,K)e^{-\frac{\zeta}{2}t}\;\;\mbox{for all}\;\;t\in\mathbb{R}^+,
\end{equation}
   where $(\widehat{B}^*_1)^*$ denotes the adjoint operator of $\widehat{B}^*_1$ in the sense:
   $\langle \widehat{B}_1^*f,u \rangle_U=\langle f,(\widehat{B}^*_1)^*u\rangle_{\mathbb{C}^N}$ for any $f\in \mathbb{C}^N$ and $u\in U$.
\par
    Next, we show that
\begin{equation}\label{yu-1-7-6}
\begin{cases}
    (\widehat{A}^*_{\gamma,1})^\top=(\Phi^*)^{-1}A_{\gamma,1}\Phi^*,\\
    (\widehat{B}^*_1)^*=(\Phi^*)^{-1}[(\rho_0+\gamma)I-A_{\gamma,1}]P_\beta^*
    (\rho_0I-\widetilde{A})^{-1}B,
\end{cases}
\end{equation}
    where $\Phi^*\in\mathcal{L}(\mathbb{C}^N;H_1)$ is the adjoint operator of $\Phi$ in the sense:
    $\langle \Phi f,g\rangle_{\mathbb{C}^N}=\langle f,\Phi^*g\rangle_{Q_1,H_1}$ for any $f\in Q_1$ and $g\in\mathbb{C}^N$. Indeed, by the
     definitions of $\widehat{A}_{\gamma,1}^*$ and $\widehat{B}^*_1$, and $(b_1)$, we get that for any $f, g\in\mathbb{C}^N$,
\begin{eqnarray}\label{yu-1-7-7}
    \langle \widehat{A}_{\gamma,1}^*f,g\rangle_{\mathbb{C}^N}
    =\langle A_{\gamma,1}^*\Phi^{-1}f, \Phi^*g\rangle_{Q_1,H_1}
    =\langle\Phi^{-1}f, A_{\gamma,1}\Phi^*g\rangle_{Q_1,H_1}
    =\langle f,(\Phi^*)^{-1} A_{\gamma,1}\Phi^*g\rangle_{\mathbb{C}^N,\mathbb{C}^N},
\end{eqnarray}
    and that for any $z\in \mathbb{C}^N$ and $u\in U$,
\begin{eqnarray}\label{yu-1-8-1}
    \langle (B^*P_\beta)\Phi^{-1}z,u\rangle_U&=&\langle P_\beta\Phi^{-1} z,Bu\rangle_{X^*_1,X_{-1}}
    =\langle [(\rho_0+\gamma)I-A_{\gamma,1}^*]\Phi^{-1}z, P^*_\beta(\rho_0 I-\widetilde{A})^{-1}Bu\rangle_{Q_1,H_1}\nonumber\\
    &=&\langle z,(\Phi^*)^{-1}[(\rho_0+\gamma)I-A_{\gamma,1}]P_\beta^*(\rho_0I-\widetilde{A})^{-1}Bu\rangle_{\mathbb{C}^N}.
\end{eqnarray}
   Here, we have used the identity $(\Phi^*)^{-1}=(\Phi^{-1})^*$. Therefore, \eqref{yu-1-7-6} follows from \eqref{yu-1-7-7} and \eqref{yu-1-8-1}.

   Finally, by \eqref{yu-1-7-6}, we have
\begin{equation*}\label{yu-1-8-2}
    (\widehat{A}^*_{\gamma,1})^\top +(\widehat{B}^*_1)^*K=(\Phi^*)^{-1}
    \{A_{\gamma,1}+[(\rho_0+\gamma)I-A_{\gamma,1}]P_\beta^*(\rho_0I-
    \widetilde{A})^{-1}BK(\Phi^*)^{-1}\}\Phi^*.
\end{equation*}
    This shows that the operator $A_{\gamma,1}+[(\rho_0+\gamma)I-A_{\gamma,1}]P_\beta^*(\rho_0I-
    \widetilde{A})^{-1}BK(\Phi^*)^{-1}$, with domain $H_1$, generates a $C_0$-group $S_{\gamma,1,K}(\cdot)$ on $H_1$. Note that,  $S_{\gamma,1,K}(\cdot)
    =\Phi^*\widehat{S}_{\gamma,1,K}(\cdot)(\Phi^*)^{-1}$. Combining this with \eqref{yu-1-7-5},
    we obtain that there exists $C_{11}(\zeta,K)>0$ such that
\begin{equation}\label{yu-1-8-3}
     \|S_{\gamma,1,K}(t)\|_{\mathcal{L}(H_1)}
  \leq C_{11}(\zeta,K)e^{-\frac{\zeta}{2}t}\;\;\mbox{for all}\;\;t\in\mathbb{R}^+.
\end{equation}
    Since $H_1$ is finite-dimensional, $S_{\gamma,1,K}(\cdot)$ is immediately compact. Therefore, by \eqref{yu-11-24-10}, \eqref{yu-1-8-3}, and arguments analogous to those in \emph{Sub-step 1.1}, we have \eqref{yu-9-23-2} with $F_{\mu}=K(\Phi^*)^{-1}$.

\par
    \vskip 5pt
    \emph{Sub-step 1.3.  We claim that the system $[A,B]_{\kappa,\tau}$ is  rapidly stabilizable with static feedback.}
    For this purpose, fix $\mu>0$ and let $F_\mu\in \mathcal{L}(H_1;U)$ be constructed in \emph{Sub-step 1.2}. Let $y_{F_\mu}(\cdot;y_0,\varphi,\gamma)
    \in C([0,+\infty);X)$ denote  the unique solution to the following equation:
\begin{equation*}
\begin{cases}
    y_t(t)=A_\gamma y(t)+BF_\mu P^*_\beta y(t)+\kappa e^{\gamma\tau}y(t-\tau),&t\in\mathbb{R}^+,\\
    y(0)=y_0,\\
    y(t)=\varphi(t),&t\in(-\tau,0).
\end{cases}
\end{equation*}
   (Its well-posedness can be found in Remark \ref{remark-11-27-1}.) Then $y_{F_\mu}(\cdot;y_0,\varphi,\gamma)$ satisfies
\begin{equation*}\label{yu-11-24-20}
\begin{cases}
    y(t)=S_\gamma(t)y_0+\kappa e^{\gamma\tau}\int_0^tS_\gamma(t-s)y(s-\tau)\mathrm ds+
    \int_0^t\widetilde{S}_\gamma(t-s)BF_\mu P^*_\beta y(s)\mathrm ds ,&t\in [0,+\infty),\\
    y(t)=\varphi(t),&t\in(-\tau,0).
\end{cases}
\end{equation*}
     Define $Y(t):=(y_{F_\mu}(t;y_0,\varphi,\gamma), y_{F_\mu}(t+\cdot;y_0,\varphi,\gamma))^\top$ for
    $t\in\mathbb{R}^+$, and
\begin{equation*}\label{yu-11-24-22}
    \mathcal{A}_\gamma(f_1,f_2)^\top:=\left(
                                 \begin{array}{c}
                                   A_\gamma f_1 + \kappa e^{\gamma\tau}f_2(-\tau) \\
                                   \partial_\theta f_2 \\
                                 \end{array}
                               \right),
                               \;\;(f_1,f_2)^{\top}\in D(\mathcal{A}_\gamma),
\end{equation*}
 where
$D(\mathcal{A}_\gamma):=\{(f_1,f_2)^\top \in \mathcal{X}: f_1\in X_1,
    f_2\in H^1(-\tau,0;X), f_2(0)=f_1\}$.
    By Lemma~\ref{yu-lemma-4-16-1} (with $A$ and $\kappa$ replaced by $A_\gamma$ and $\kappa e^{\gamma\tau}$, respectively)
    and Proposition \ref{yu-proposition-10-14-1}, it follows that $\mathcal{A}_\gamma$ with domain $D(\mathcal{A}_\gamma)$ generates an eventually compact semigroup $\mathcal{S}_\gamma(\cdot)$ on $\mathcal{X}$, and  $Y(\cdot)$  is the unique solution to
\begin{equation}\label{yu-11-24-24}
    Y(t)=\mathcal{S}_\gamma(t)(y_0,\varphi)^\top+\int_0^t
    \widetilde{\mathcal{S}}_\gamma(t-s)
    \mathcal{B}\mathcal{F}_\mu Y(s)\mathrm ds,
\end{equation}
    where $\mathcal{B}$ is defined in \eqref{yu-4-16-12}, $\mathcal{F}_\mu:=(F_\mu P_\beta^*,0)$, and $\widetilde{\mathcal{S}}_\gamma(\cdot)$ denotes the extension of $\mathcal{S}_\gamma(\cdot)$ from $\mathcal{X}$ to $(D(\mathcal{A}_\gamma^*))'$, defined analogously to the extension of $S(\cdot)$ from $X$ to $X_{-1}$ (see (iii) in Remark \ref{yu-remark-8-00-1}). The operator $\mathcal{A}_\gamma^*$ and its domain $D(\mathcal{A}_\gamma^*)$ are defined in \eqref{yu-4-9-12} and \eqref{yu-4-12-3} (with $A^*$ and $\kappa$ replaced by $A_\gamma^*$ and $\kappa e^{\gamma\tau}$, respectively), and the norm on $D(\mathcal{A}_\gamma^*)$ is given by
   $\|(\widehat{\rho}I-\mathcal{A}_\gamma^*)W\|_{\mathcal{X}}$ for $W\in D(\mathcal{A}_\gamma^*)$
   with $\widehat{\rho}\in \rho(\mathcal{A}_\gamma^*) \cap\mathbb{R}^+$.
    By linearity of \eqref{yu-11-24-24}, we can write
\begin{equation}\label{yu-11-24-25}
    Y(t)=\Psi_{\gamma,\mu}(t)(y_0,\varphi)^\top,\;\;t\in\mathbb{R}^+.
\end{equation}

    Firstly, we prove that $\Psi_{\gamma,\mu}(\cdot)$ is a $C_0$-semigroup on $\mathcal{X}$.
     On the one hand, it is clear that
     $\Psi_{\gamma,\mu}(0)=I$. On the other hand, since $\mathcal{S}_{\gamma}(\cdot)$ is strongly continuous,
     $\Psi_{\gamma,\mu}(\cdot)$ is also strongly continuous. Hence, it suffices to show that
\begin{equation}\label{yu-1-10-1}
    \Psi_{\gamma,\mu}(t_1+t_2)=\Psi_{\gamma,\mu}(t_2)\Psi_{\gamma,\mu}(t_1)\;\;\mbox{for all}\;\;t_1,t_2\in\mathbb{R}^+.
\end{equation}
    To this end, let $Y_0:=(y_0,\varphi)^\top\in\mathcal{X}$,  and fix $t_1,t\in\mathbb{R}^+$ arbitrarily. By
    \eqref{yu-11-24-24} and the fact that $\widetilde{\mathcal{S}}_\gamma(\cdot)=\mathcal{S}_\gamma(\cdot)$ on $\mathcal{X}$, we have
\begin{eqnarray*}\label{yu-1-10-2}
    \Psi_{\gamma,\mu}(t+t_1)Y_0&=&\mathcal{S}_\gamma(t+t_1)Y_0
    +\int_0^{t+t_1}\widetilde{S}_\gamma(t+t_1-s)
    \mathcal{B}\mathcal{F}_{\mu}\Psi_{\gamma,\mu}(s)Y_0\mathrm ds \nonumber\\
    &=&\mathcal{S}_\gamma(t+t_1)Y_0+\mathcal{S}_{\gamma}(t)\int_0^{t_1}\widetilde{S}_\gamma(t_1-s)
    \mathcal{B}\mathcal{F}_{\mu}\Psi_{\gamma,\mu}(s)Y_0\mathrm ds \nonumber\\
    &\;&+\int_0^t\widetilde{S}_\gamma(t-s)
    \mathcal{B}\mathcal{F}_{\mu}\Psi_{\gamma,\mu}(s+t_1)Y_0\mathrm ds \nonumber\\
    &=&\mathcal{S}_\gamma(t)\Psi_{\gamma,\mu}(t_1)Y_0
    +\int_0^t\widetilde{S}_\gamma(t-s)
    \mathcal{B}\mathcal{F}_{\mu}\Psi_{\gamma,\mu}(s+t_1)Y_0\mathrm ds .
\end{eqnarray*}
     By the uniqueness of solution to \eqref{yu-11-24-24} (with $(y_0,\varphi)^\top$ replaced by $\Psi_{\gamma,\mu}(t_1)Y_0$) and
     the arbitrariness of $t$ and $Y_0$, it follows that
    \eqref{yu-1-10-1} holds.

  Secondly, we show that
\begin{equation}\label{yu-11-24-26}
    \int_{\mathbb{R}^+}\|\Psi_{\gamma,\mu}(t)(y_0,\varphi)^\top\|^2_{\mathcal{X}}\mathrm dt <+\infty.
\end{equation}
  To this end, let $Z_1(\cdot):=\mathcal{P}Y(\cdot)$ and $Z_2(\cdot)
   =(I-\mathcal{P})Y(\cdot)$. By the definition of $Y(\cdot)$, \eqref{yu-12-19-1} and arguments similar to those in Step 5 of Theorem 3.2 in \cite{Ma-Trelat-Wang-Yu}, we have
    $Z_1(t)=(w_{F_\mu}(t), w_{F_\mu}(t+\cdot))^\top$ for $t\in\mathbb{R}^+$,
    where $w_{F_\mu}(\cdot)$ is the solution to \eqref{yu-9-23-1}. Hence, by \eqref{yu-9-23-2},
\begin{equation}\label{yu-11-24-27}
    \|Z_1(t)\|_{\mathcal{X}}\leq C_5(\mu)e^{-\mu t}(\|y_0\|^2_{X}
    +\|\varphi\|^2_{L^2(-\tau,0;X)})^{\frac{1}{2}}\;\;\mbox{for all}\;\;t\in\mathbb{R}^+.
\end{equation}
    Let $\mathcal{S}_{\gamma,2}(\cdot)$ be as introduced in \emph{Sub-step 1.1}. It is straightforward to verify that $\mathcal{S}_{\gamma,2}(\cdot)=(I-\mathcal{P})\mathcal{S}_\gamma(\cdot)$.
   Moreover, by \eqref{yu-12-19-1}, we have $\mathcal{F}_\mu = (F_\mu,0)\mathcal{P}$. Combining this with \eqref{yu-11-24-24}, we obtain
\begin{equation}\label{yu-11-24-28}
    Z_2(t)=\mathcal{S}_{\gamma,2}(t)(I-\mathcal{P})(y_0,\varphi)^\top
    +(I-\mathcal{P})\int_0^t
    \widetilde{\mathcal{S}}_{\gamma}(t-s)
    \mathcal{B}\mathcal{F}_\mu Z_1(s)\mathrm ds .
\end{equation}
Let $T>0$ be such that
\begin{equation}\label{yu-1-8-10}
    \sqrt{2}C_{3}(\alpha)e^{-\alpha T}<1.
\end{equation}
    On the one hand, by \eqref{yu-11-24-4} and $(c_1)$,
    we have $(I-\mathcal{P}^*)D(\mathcal{A}_\gamma^*)\subset D(\mathcal{A}_\gamma^*)$. It then follows from the definition of
    $\mathcal{\widetilde{S}}_\gamma(\cdot)$ and \eqref{yu-11-24-28} that, for each $W\in D(\mathcal{A}_\gamma^*)$ and $t\in\mathbb{R}^+$,
\begin{equation}\label{wang-10-31-1}
\begin{array}{lll}
    \langle Z_2(t),W\rangle_{\mathcal{X}}
    &=&
    \langle\mathcal{S}_{\gamma,2}(t)(I-\mathcal{P})(y_0,\varphi)^\top,W\rangle_{\mathcal{X}}\\
   &&+\left\langle\int_0^t\mathcal{S}_\gamma(t-s)
(\widehat{\rho} I-\widetilde{\mathcal{A}}_\gamma)^{-1}\mathcal{B}\mathcal{F}_\mu Z_1(s)\mathrm ds ,
(\widehat{\rho} I-\mathcal{A}^*_\gamma)(I-\mathcal{P}^*)W\right\rangle_{\mathcal{X}}.
\end{array}
\end{equation}
Moreover,  by $(b_1)$ above and \eqref{yu-12-19-1}, we have $\mathcal{P}^*\mathcal{A}_\gamma^*=
    \mathcal{A}_\gamma^*\mathcal{P}^*$ on $D(\mathcal{A}^*_\gamma)$. Combining this with \eqref{wang-10-31-1} and Corollary~\ref{wang-11-8-1} (with $\mathcal{A}^*$
    replaced by $\mathcal{A}_\gamma^*$), we deduce that for each
 $W\in D(\mathcal{A}_\gamma^*)$ and each $t\in [nT,(n+1)T]$ with $n\in\mathbb{N}^+$,
\begin{eqnarray}\label{wang-10-31-2}
    &\;&\langle Z_2(t),W\rangle_X\nonumber\\
    &=&\langle \mathcal{S}_{\gamma,2}(nT)(I-\mathcal{P})(y_0,\varphi)^\top,
    \mathcal{S}_{\gamma,2}^*(t-nT)W\rangle_{\mathcal{X}}\nonumber\\
    &&+\left\langle \int_0^{t}\mathcal{S}_\gamma(t-s)
    (\widehat{\rho}I-\widetilde{\mathcal{A}}_\gamma)^{-1}
    \mathcal{B}\mathcal{F}_{\mu}Z_1(s)\mathrm ds,
    (\widehat{\rho}I-\mathcal{A}^*_\gamma)(I-\mathcal{P}^*)W\right\rangle_{\mathcal{X}}\nonumber\\
    &=&\langle \mathcal{S}_{\gamma,2}(nT)(I-\mathcal{P})(y_0,\varphi)^\top,
    \mathcal{S}_{\gamma,2}^*(t-nT)W\rangle_{\mathcal{X}}\nonumber\\
    &&+\left\langle \int_0^{t-nT}\mathcal{S}_\gamma(t-nT-s)(\widehat{\rho}I-\widetilde{\mathcal{A}}_\gamma)^{-1}
    \mathcal{B}\mathcal{F}_{\mu}Z_1(nT+s)\mathrm ds, (\widehat{\rho}I-\mathcal{A}_\gamma^*)(I-\mathcal{P}^*)W\right\rangle_{\mathcal{X}}\nonumber\\
    &&+\left\langle \int_0^{nT}\mathcal{S}_\gamma(nT-s)(\widehat{\rho}I-\widetilde{\mathcal{A}}_\gamma)^{-1}
    \mathcal{B}\mathcal{F}_{\mu}Z_1(s)\mathrm ds, (I-\mathcal{P}^*)(\widehat{\rho}I-\mathcal{A}_\gamma^*)\mathcal{S}^*_{\gamma,2}(t-nT)W
    \right\rangle_{\mathcal{X}}\nonumber\\
    &=&\langle \mathcal{S}_{\gamma,2}(t-nT)Z_2(nT),W\rangle_{\mathcal{X}}+\left\langle (I-\mathcal{P})\int_0^{t-nT}\widetilde{\mathcal{S}}_\gamma(t-nT-s)
    \mathcal{B}\mathcal{F}_\mu Z_1(nT+s)\mathrm ds,W\right\rangle_{\mathcal{X}}.
\end{eqnarray}
Since $D(\mathcal{A}^*_\gamma)$ is dense in $\mathcal{X}$, it follows from (\ref{wang-10-31-2}) that
\begin{equation}\label{yu-1-9-1}
    Z_2(t)=\mathcal{S}_{\gamma,2}(t-nT)Z_2(nT)+(I-\mathcal{P})\int_{0}^{t-nT}
    \widetilde{\mathcal{S}}_\gamma(t-nT-s)\mathcal{B}\mathcal{F}_\mu Z_1(nT+s)\mathrm ds,\;\;t\in[nT,(n+1)T].
\end{equation}
On the other hand, by Lemma~\ref{yu-lemma-10-13-2} (with $\mathcal{A}$ replaced by $\mathcal{A}_\gamma$),
there exists a constant $C_{12}(\gamma,T)>0$ such that
\begin{equation}\label{yu-1-10-11}
    \int_0^T\|\mathcal{B}^*\mathcal{S}_\gamma^*(t)W\|_{U}^2\mathrm dt
    \leq C_{12}(\gamma,T)\|W\|^2_{\mathcal{X}}\;\;\mbox{for any}\;\;W\in D(\mathcal{A}^*_\gamma).
\end{equation}
Fix any $u\in L^2(0,T;U)$. For each $W\in D(\mathcal{A}^*_\gamma)$, we have
\begin{equation*}
    \left\langle \int_0^t\widetilde{\mathcal{S}}_\gamma(t-s)\mathcal{B}u(s)\mathrm ds,W\right\rangle_{\mathcal{X}}
    =\int_0^t\langle u(s),\mathcal{B}^*\mathcal{S}_\gamma^*(t-s)W\rangle_U\mathrm ds\;\;\mbox{for all}\;\;t\in[0,T].
\end{equation*}
    Combining this with \eqref{yu-1-10-11} and applying the Cauchy-Schwarz inequality, we get
\begin{eqnarray*}\label{yu-1-10-13}
    \left|\left\langle \int_0^t\widetilde{\mathcal{S}}_\gamma(t-s)\mathcal{B}u(s)\mathrm ds,W\right\rangle_{\mathcal{X}}\right|
    &\leq& \left(\int_0^t\|u(s)\|_U^2\mathrm ds\right)^{\frac{1}{2}}
    \left(\int_0^T\|\mathcal{B}^*\mathcal{S}_\gamma^*(s)W\|_{U}^2\mathrm ds\right)^{\frac{1}{2}}\nonumber\\
    &\leq& (C_{12}(\gamma,T))^{\frac{1}{2}}\|W\|_{\mathcal{X}}\left(\int_0^t\|u(s)\|_U^2\mathrm ds\right)^{\frac{1}{2}}.
\end{eqnarray*}
Hence, for all $u\in L^2(0,T;U)$,
\begin{equation}\label{wang-11-02-1}
    \left\|\int_0^t\widetilde{\mathcal{S}}_\gamma(t-s)\mathcal{B}u(s)\mathrm ds\right\|^2_{\mathcal{X}}\leq
    C_{12}(\gamma,T)\int_0^t\|u(s)\|_U^2\mathrm ds\;\;\mbox{for all}\;\;t\in[0,T].
\end{equation}
Combining \eqref{wang-11-02-1} with \eqref{yu-9-18-1}, \eqref{yu-11-24-27}, \eqref{yu-11-24-28}, \eqref{yu-1-9-1}, and the boundedness of $\mathcal{F}_{\mu}$, we have that there exist positive constants $C_{13}(\alpha,\gamma,T,\mu,\mathcal{F}_{\mu})$ and $C_{14}(\gamma,T,\mathcal{F}_\mu)$ such that
\begin{equation}\label{yu-11-21-5}
    \sup_{t\in[0,T]}\|Z_2(t)\|_{\mathcal{X}}\leq C_{13}(\alpha,\gamma,T,\mu,\mathcal{F}_{\mu})
    \|(y_0,\varphi)^\top\|_{\mathcal{X}},
\end{equation}
and for each $n\in\mathbb{N}^+$ and $t\in [nT,(n+1)T]$,
\begin{eqnarray*}\label{yu-1-12-1}
    \|Z_2(t)\|_{\mathcal{X}}^2\leq 2(C_3(\alpha))^2e^{-2\alpha (t-nT)}\|Z_2(nT)\|^2_{\mathcal{X}}
    +C_{14}(\gamma,T,\mathcal{F}_\mu)\int_{nT}^{t}\|Z_1(s)\|^2_{\mathcal{X}}\mathrm ds.
\end{eqnarray*}
Consequently, for each $n\in\mathbb{N}^+$,
\begin{equation}\label{yu-11-21-6}
    \begin{cases}
    \sup_{t\in[nT,(n+1)T]}\|Z_2(t)\|_{\mathcal{X}}^2\leq (2(C_3(\alpha))^2+C_{14}(\gamma,T,\mathcal{F}_{\mu}))
    \left(\|Z_2(nT)\|_{\mathcal{X}}^2+\int_{nT}^{(n+1)T}\|Z_1(s)\|^2_{\mathcal{X}}\mathrm ds\right),\\
    \|Z_2((n+1)T)\|_{\mathcal{X}}^2\leq   2(C_3(\alpha))^2e^{-2\alpha T}\|Z_2(nT)\|_{\mathcal{X}}^2+C_{14}(\gamma, T,\mathcal{F}_{\mu})\int_{nT}^{(n+1)T}\|Z_1(s)\|^2_{\mathcal{X}}\mathrm ds.
\end{cases}
\end{equation}
It follows from the second inequality in \eqref{yu-11-21-6} that, for each $m\in\mathbb{N}^+$ with $m> 2$,
\begin{equation*}\label{yu-1-12-2}
    \sum_{n=2}^{m+1}\|Z_2(nT)\|_{\mathcal{X}}^2\leq 2(C_3(\alpha))^2e^{-2\alpha T}
     \sum_{n=1}^m\|Z_2(nT)\|_{\mathcal{X}}^2+C_{14}(\gamma,T,\mathcal{F}_\mu)
     \int_{T}^{(m+1)T}\|Z_1(s)\|^2_{\mathcal{X}}\mathrm ds.
\end{equation*}
Combining this with \eqref{yu-1-8-10} and \eqref{yu-11-21-5}, we obtain that for all such $m$,
\begin{eqnarray*}
    &\;&\sum_{n=1}^m\|Z_2(nT)\|_{\mathcal{X}}^2\\
    &\leq& [1-2(C_3(\alpha))^2e^{-2\alpha T}]^{-1}\left((C_{13}(\alpha,\gamma,T,\mu,\mathcal{F}_\mu))^2
    \|(y_0,\varphi)^{\top}\|_{\mathcal{X}}
    ^2+ C_{14}(\gamma,T,\mathcal{F}_\mu)\int_{0}^{(m+1)T}\|Z_1(s)\|^2_{\mathcal{X}}\mathrm ds\right).
\end{eqnarray*}
Together with \eqref{yu-11-24-27}, \eqref{yu-11-21-5}, and the first inequality in \eqref{yu-11-21-6}, this implies
 $\int_{\mathbb{R}^+}\|Z_2(t)\|_{\mathcal{X}}^2\mathrm{dt}<+\infty$.  Combining this with \eqref{yu-11-24-25} and \eqref{yu-11-24-27}, we have  \eqref{yu-11-24-26}.

Finally, by \eqref{yu-11-24-26}, \cite[Lemma 5.1, Section 5.1, Chapter 5]{Curtain-Zwart}, and the arbitrariness of $(y_0,\varphi)$,
there exist $\varepsilon>0$ and $C_{15}(\varepsilon)>0$ such that
$\|\Psi_{\gamma,\mu}(t)\|_{\mathcal{L}(\mathcal{X})}\leq C_{15}(\varepsilon)e^{-\varepsilon t}$ for all $t\in\mathbb{R}^+$.
It then follows from \eqref{yu-11-24-25} and the definition of $Y(\cdot)$ that,  for all $y_0\in X$ and $\varphi\in L^2(-\tau,0;X)$,
\begin{equation}\label{yu-11-24-31}
    \|y_{F_\mu}(t;y_0,\varphi,\gamma)\|_X\leq  C_{15}(\varepsilon)e^{-\varepsilon t}
    (\|y_0\|^2_{X}
    +\|\varphi\|^2_{L^2(-\tau,0;X)})^{\frac{1}{2}}\;\;\mbox{for all}\;\;t\in\mathbb{R}^+.
\end{equation}
   Let $K:=F_\mu P^*_\beta$, and let $y_K(\cdot;y_0,\varphi)$ be the solution to \eqref{yu-12-20-200}.
   Then $e^{\gamma t}y_K(t;y_0,\varphi)=
    y_{F_\mu}(t;y_0,e^{\gamma\cdot}\varphi,\gamma)$ for all $t\in \mathbb{R}^+$, which, together with \eqref{yu-11-24-31}, yields
\begin{equation*}\label{yu-11-24-40}
    \|y_K(t;y_0,\varphi)\|_X\leq  C_{15}(\varepsilon)e^{-\gamma t}
    (\|y_0\|^2_{X}
    +\|\varphi\|^2_{L^2(-\tau,0;X)})^{\frac{1}{2}}\;\;\mbox{for all}\;\;t\in\mathbb{R}^+.
\end{equation*}
    Since $\gamma>0$ is arbitrary, the above estimate shows that system $[A,B]_{\kappa,\tau}$ is  rapidly stabilizable with  static feedback.

\vskip 5pt
    \emph{Step 2. $(iii)\Rightarrow (ii)$.} It is obvious.

\vskip 5pt
    \emph{Step 3. $(ii)\Rightarrow (i)$.}  Firstly, we show that
\begin{equation}\label{wang-10-31-5}
      \mbox{System}\;\;\eqref{yu-10-14-2}\;\;\mbox{is rapidly stabilizable}.
\end{equation}
To this end, fix $\alpha>0$ arbitrarily. Since $[A,B]_{\kappa,\tau}$ is rapidly stabilizable, there exist $K:=K(\alpha)\in\mathcal{L}(\mathcal{X};U)$
and  $C_{16}(\alpha)>0$ such that for each $(y_0,\varphi)^\top\in \mathcal{X}$, the solution $y_K(\cdot;y_0,\varphi)$ to \eqref{yu-12-20-100} satisfies
\begin{equation}\label{yu-11-26-1}
\|y_K(t;y_0,\varphi)\|_{X}\leq C_{16}(\alpha) e^{-2\alpha t}(\|y_0\|^2_{X}
    +\|\varphi\|^2_{L^2(-\tau,0;X)})^{\frac{1}{2}}\;\;\mbox{for all}\;\;t\in\mathbb{R}^+.
\end{equation}
 This yields that, for each $t\geq \tau$,
\begin{equation*}
    \|y_K(t+\cdot;y_0,\varphi)\|^2_{L^2(-\tau,0;X)}
    =\int_{t-\tau}^t \|y_K(\sigma;y_0,\varphi)\|_X^2\mathrm d\sigma
    \leq(C_{16}(\alpha))^2\tau e^{-4\alpha (t-\tau)}(\|y_0\|^2_{X}
    +\|\varphi\|^2_{L^2(-\tau,0;X)}).
\end{equation*}
Since $y_K(\cdot;y_0,\varphi)=\varphi(\cdot)$ in $(-\tau,0)$, it follows  that there exists a constant $C_{17}(\alpha,\tau)>0$ such that
\begin{equation}\label{yu-11-26-4}
    \|y_K(t+\cdot;y_0,\varphi)\|_{L^2(-\tau,0;X)}\leq C_{17}(\alpha,\tau)
    e^{-2\alpha t}(\|y_0\|^2_{X}
    +\|\varphi\|^2_{L^2(-\tau,0;X)})^{\frac{1}{2}}\;\;\mbox{for all}\;\;
    t\in\mathbb{R}^+.
\end{equation}
     Define $u_K(t):=K(y_K(t;y_0,\varphi),
    y_K(t+\cdot;y_0,\varphi))^\top$ for $t\in\mathbb{R}^+$. Then, by \eqref{yu-11-26-1}
    and \eqref{yu-11-26-4}, we have
\begin{equation}\label{yu-11-26-5}
    e^{\alpha t}\|u_K(t)\|_U\leq C_{18}(\alpha,\tau)e^{-\alpha t}(\|y_0\|^2_{X}
    +\|\varphi\|^2_{L^2(-\tau,0;X)})^{\frac{1}{2}}\;\;\mbox{for a.e.}\;\;t\in\mathbb{R}^+
\end{equation}
  for some constant $C_{18}(\alpha,\tau)>0$. Let $Y_0:=(y_0,\varphi)^{\top}$ and define $Y_K(\cdot):=(y_1(\cdot),y_2(\cdot))^\top$ with $y_1(t):=y_K(t;y_0,\varphi)$
  and $[y_2(t)](\cdot):=y_K(t+\cdot;y_0,\varphi)$ in $(-\tau,0)$ for $t\in\mathbb{R}^+$. Then, by \eqref{yu-11-26-1},
  \eqref{yu-11-26-4} and  $(i)$ in Proposition \ref{yu-proposition-10-14-1},  $Y_K(\cdot)$ is the solution to the equation:
\begin{equation*}\label{yu-11-26-6}
    \begin{cases}
    Y_t(t)=\mathcal{A}Y(t)+\mathcal{B}u_K(t),&t\in\mathbb{R}^+,\\
    Y(0)=Y_0,
\end{cases}
\end{equation*}
  and satisfies
\begin{equation}\label{yu-11-26-7}
    \|Y_K(t)\|_{\mathcal{X}}\leq C_{19}(\alpha,\tau)e^{-2\alpha t}\|Y_0\|_{\mathcal{X}}
    \;\;\mbox{for all}\;\;t\in\mathbb{R}^+,
\end{equation}
   for some constant $C_{19}(\alpha,\tau)>0$. Set $Z_K(\cdot):=e^{\alpha\cdot}Y_K(\cdot)$ and $v_K(\cdot):=e^{\alpha\cdot}u_K(\cdot)$. Then,
  $Z_K(\cdot)$ is the solution to the equation:
\begin{equation}\label{yu-11-26-8}
\begin{cases}
    Z_t(t)=(\mathcal{A}+\alpha I)Z(t)+\mathcal{B}v_K(t), &t\in\mathbb{R}^+,\\
     Z(0)=Y_0.
\end{cases}
\end{equation}
    This, along with the definitions of $Z_K(\cdot)$ and $v_K(\cdot)$, \eqref{yu-11-26-7} and \eqref{yu-11-26-5}, implies that
    $$\inf_{u\in L^2(\mathbb{R}^+;U)}
    \int_0^{+\infty}(\|Z(t;Y_0,u)\|_{\mathcal{X}}^2+\|u(t)\|_U^2)\mathrm dt<+\infty,$$
    where $Z(\cdot;Y_0,u)$ is the solution of \eqref{yu-11-26-8}
    with $v_K$ replaced by $u$.  Since $(y_0,\varphi)^\top$ is arbitrary, it follows from \cite[Proposition 3.9]{Liu-Wang-Xu-Yu}
    that $[\mathcal{A}+\alpha I,\mathcal{B}]$ is stabilizable. The conclusion \eqref{wang-10-31-5} then follows from the arbitrariness of
    $\alpha$, \cite[Theorem 3.4]{Liu-Wang-Xu-Yu}, and standard arguments (cf. \cite[Lemma 3.1]{Ma-Trelat-Wang-Yu}).

 Secondly, we show that
\begin{equation}\label{yu-11-29-20}
    \mbox{Ker}\left(
                \begin{array}{c}
                  \lambda I-A^* \\
                  B^* \\
                \end{array}
              \right)=\{0\}\;\;\mbox{for all}\;\;\lambda\in\mathbb{C}.
\end{equation}
 Suppose, by contradiction, that there exist $\lambda_{\sharp}\in\mathbb{C}$ and  $f_{\sharp}\in X_1^*\setminus\{0\}$ such that
\begin{equation}\label{yu-11-29-21}
    (\lambda_{\sharp} I-A^*)f_{\sharp}=0\;\;\mbox{and}\;\;B^*f_{\sharp}=0.
\end{equation}
   It follows from Lemma~\ref{yu-lemma-11-29-1} that there exists $\lambda^*\in\mathbb{C}$ such that
\begin{equation}\label{yu-9-10-19}
\lambda^*-\kappa e^{-\lambda^*\tau} =\lambda_\sharp.
\end{equation}
    Define $\alpha^*:=|\mbox{Re}\lambda^*|+1$.  Then $\alpha^*>0$ and
\begin{equation}\label{yu-9-10-19-bb}
    \lambda^*\in \mathbb{C}_{-\alpha^*}^+.
\end{equation}
Let  $\varphi_{\sharp}(\theta):=\kappa e^{-\lambda^*(\theta+\tau)} f_{\sharp}$ for $\theta\in[-\tau,0]$.
    Then, by  \eqref{yu-4-12-3}, we have
    $(f_{\sharp},\varphi_{\sharp})^\top\in \mathcal{X}_1^*$  and
\begin{equation}\label{wang-10-31-6}
    (\lambda^* I-\mathcal{A}^*)(f_{\sharp},\varphi_{\sharp})^\top=(((\lambda^*-\kappa e^{-\lambda^*\tau}) I-A^*)f_{\sharp},0)^\top.
\end{equation}
 From (\ref{wang-10-31-5}) and Proposition \ref{yu-lemma-1-12-1}, there exists $C_{20}(\alpha^*)>0$ such that
\begin{equation}\label{yu-bb-11-23-100}
    \|\Psi\|^2_{\mathcal{X}}\leq C_{20}(\alpha^*)(\|(\lambda I-\mathcal{A}^*)\Psi\|^2_{\mathcal{X}}+\|\mathcal{B}^*\Psi\|_U^2)
    \;\;\mbox{for all}\;\;\lambda\in \mathbb{C}_{-\alpha^*}^+\;\;\mbox{and}\;\;\Psi\in \mathcal{X}_1^*.
\end{equation}
Applying \eqref{yu-bb-11-23-100} with $\Psi=(f_{\sharp},\varphi_{\sharp})^\top$,  and using \eqref{yu-4-16-12}, \eqref{yu-9-10-19}, \eqref{yu-9-10-19-bb}, and \eqref{wang-10-31-6}, we obtain
\begin{eqnarray*}\label{yu-8-30-4}
    \|f_{\sharp}\|_X^2&\leq&C_{20}(\alpha^*)(\|((\lambda^*-\kappa e^{-\lambda^* \tau})I-A^*)f_{\sharp}\|^2_{X}
    +\|B^*f_{\sharp}\|^2_U)\nonumber\\
    &=&C_{20}(\alpha^*)(\|(\lambda_\sharp I-A^*)f_{\sharp}\|^2_{X}
    +\|B^*f_{\sharp}\|^2_U).
\end{eqnarray*}
    Together with \eqref{yu-11-29-21}, this yields $f_{\sharp}= 0$, a contradiction. Hence \eqref{yu-11-29-20} holds.
\par
    Finally, fix $\alpha>0$ arbitrarily. By $(A_1)$ and \cite[Lemma 6.2]{Ma-Trelat-Wang-Yu},
     there exists a  projection operator  $P_{\alpha}\in \mathcal{L}(X)$ such that
     $(d_1)$ $P_{\alpha}X^*_1\subset X^*_1$ and $A^*P_{\alpha}=P_{\alpha}A^*$;
  $(d_2)$ $\sigma((P_{\alpha}A^*)|_{P_{\alpha}X})\subset \mathbb{C}_{-\alpha}^+$;
  $(d_3)$ $P_{\alpha}X$ is a finite-dimensional space;
  $(d_4)$ there exists  $C_{21}(\alpha)>0$ such that $\|(I-P_{\alpha})S^*(t)\|_{\mathcal{L}(X)}\leq C_{21}(\alpha)e^{-\frac{\alpha}{2} t}$ for all $t\in\mathbb{R}^+$.
   By $(d_1)$ and $(d_3)$, we  have
  $P_\alpha X\subset X^*_1$ and $A^*P_\alpha\in \mathcal{L}(P_\alpha X)$. It then follows from $(iii)$ in
  Remark \ref{yu-remark-8-00-1} that
    $B^*P_\alpha=B^*(\rho_0I-A^*)^{-1}(\rho_0I-A^*)P_\alpha
    =B^*(\rho_0I-A^*)^{-1}(\rho_0I-A^*P_\alpha)P_\alpha\in \mathcal{L}(P_\alpha X;U)$. Moreover, by \eqref{yu-11-29-20}, we obtain
\begin{equation}\label{yu-12-1-1}
    \mbox{Ker}\left(
                \begin{array}{c}
                  \lambda I-A^*P_\alpha \\
                  B^*P_\alpha \\
                \end{array}
              \right)=\mbox{Ker}\left(
                \begin{array}{c}
                  \lambda I-A^* \\
                  B^* \\
                \end{array}
              \right)\biggl|_{P_\alpha X}=\{0\}\;\;\mbox{for all}\;\;\lambda\in\mathbb{C}.
\end{equation}
    Let $k:=\mbox{Dim}(P_\alpha X)$ (see $(d_3)$) and let $\Phi: P_\alpha X \to \mathbb{C}^k$ be an isomorphism.  Define $\widehat{A}^*:=\Phi A^*P_\alpha\Phi^{-1}\in\mathbb{C}^{k\times k}$ and $\widehat{B}^*:=(B^*P_\alpha)\Phi^{-1}\in \mathcal{L}(\mathbb{C}^k;U)$.
    Then \eqref{yu-12-1-1} implies
\begin{equation*}\label{yu-12-1-2}
\mbox{Ker}\left(
                \begin{array}{c}
                  \lambda I-\widehat{A}^* \\
                 \widehat{B}^* \\
                \end{array}
              \right)=\{0\}\;\;\mbox{for all}\;\;\lambda\in\mathbb{C}.
\end{equation*}
    By \cite[Proposition 1.5.1, Chapter 1]{Tucsnak-Weiss},  for each $T>0$, there exists  $C_{22}(T)>0$ such that
\begin{equation}\label{yu-12-1-3}
    \|e^{\widehat{A}^* T}b\|_{\mathbb{C}^k}^2\leq
    C_{22}(T)\int_0^T\|\widehat{B}^*e^{\widehat{A}^* s}b\|_U^2\mathrm ds
    \;\;\mbox{for all}\;\;b\in\mathbb{C}^k.
\end{equation}
    (Here, it should be emphasized that the proof of \cite[Proposition 1.5.1,  Chapter 1]{Tucsnak-Weiss} is independent of the dimension of control space.)
    Since $P_\alpha X\subset X_1^*$, by the definitions of $\widehat{A}^*$ and $\widehat{B}^*$, and \eqref{yu-12-1-3},
    we have that there exists a constant $C_{23}(T)>0$
\begin{equation}\label{yu-12-1-4}
    \|P_\alpha S^*(T)\varphi\|_{X}^2\leq
    C_{23}(T)\int_0^T\|B^*P_\alpha S^*(s)\varphi\|_U^2\mathrm ds
    \;\;\mbox{for all}\;\;\varphi\in X.
\end{equation}
    Fix $T_0>0$ and let $T>T_0>0$ with $N:=[T/T_0]\geq2$, where $[T/T_0]:=\max\{n\in\mathbb{N}:nT_0\leq T\}$.
    For  $\varphi\in X_1^*$, using \eqref{yu-12-1-4} and the semigroup property, we obtain
\begin{eqnarray*}
    \|S^*(T)\varphi\|_X^2&=&\|S^*(T-NT_0)S^*(NT_0)\varphi\|_X^2\\
    &\leq&2D(T_0)
    (\|P_\alpha S^*(NT_0)\varphi\|_X^2+\|(I-P_\alpha)S^*(NT_0)\varphi\|_X^2)\nonumber\\
    &\leq&2D(T_0)
    \left(C_{23}(T_0)\int_0^{T_0}\|B^*P_\alpha S^*(t+(N-1)T_0)\varphi\|_U^2\mathrm dt
    +\|(I-P_\alpha)S^*(NT_0)\varphi\|_X^2\right),
\end{eqnarray*}
    where $D(T_0):=\sup_{t\in[0,T_0]}\|S^*(t)\|^2_{\mathcal{L}(X)}$.
    This, along with $(d_4)$, implies that
\begin{equation}\label{yu-12-1-11}
    \|S^*(T)\varphi\|_X^2\leq I_1+I_2,
\end{equation}
    where
\begin{equation}\label{yu-12-1-12}
\begin{cases}
    I_1:=2D(T_0)\left[2C_{23}(T_0)\int_{(N-1)T_0}^{NT_0}\|B^*S^*(t)\varphi\|_U^2\mathrm dt
    +(C_{21}(\alpha))^2e^{-NT_0\alpha}\|\varphi\|_X^2\right],\\
    I_2:=4D(T_0)C_{23}(T_0)\int_{0}^{T_0}\|B^*(I-P_\alpha)S^*(t)S^*((N-1)T_0)\varphi\|_U^2
    \mathrm dt.
\end{cases}
\end{equation}
   By Assumption $(A_3)$, $(iv)$ of Remark~\ref{yu-remark-8-00-1} and $(d_4)$, there exists  a constant $C_{24}(T_0,\alpha)>0$ such that
\begin{equation*}
    I_2\leq C_{24}(T_0,\alpha)e^{-(N-1)T_0\alpha}\|\varphi\|_X^2.
\end{equation*}
    Combining the above estimate with \eqref{yu-12-1-11}, \eqref{yu-12-1-12} and using the fact $NT_0\leq T<(N+1)T_0$, we have
\begin{equation*}\label{yu-12-1-14}
    \|S^*(T)\varphi\|_X^2
    \leq 4D(T_0)C_{23}(T_0)\int_0^T\|B^*S^*(t)\varphi\|_U^2\mathrm dt
    +e^{T_0\alpha}[2D(T_0)(C_{21}(\alpha))^2+C_{24}(T_0,\alpha)e^{\alpha T_0}]e^{-\alpha T}\|\varphi\|_X^2.
\end{equation*}
    Since $\varphi$, $\alpha$ and $T$ are arbitrary,  the conclusion follows from \cite[Theorem 3.4]{Liu-Wang-Xu-Yu}, namely that system
     $[A,B]$ is rapidly stabilizable.
\par
    In summary, we complete the proof of Theorem \ref{yu-theorem-9-01-1}.
\end{proof}

\section{Applications}\label{yu-sec-10-1-4}
In this section, we provide two illustrative examples as applications of Theorem \ref{yu-theorem-9-01-1}.

\subsection{Delayed heat equation with Neumann boundary control}  Let $\alpha>0$, $\tau>0$ and $\kappa\in\mathbb{R}$.
     Consider the following controlled equation:
\begin{equation}\label{m-25-7-21-1}
\begin{cases}
    y_t(t,x)=(\partial_x^2-\alpha ) y(t,x)+\kappa y(t-\tau,x),&(t,x)\in \mathbb{R}^+\times (0,1),\\
    y_x(t,0)=0,\;\;y_x(t,1)=u(t),&t\in \mathbb{R}^+,\\
    y(0,x)=y_0(x),&x\in(0,1),\\
    y(t,x)=\varphi(t,x),&(t,x)\in(-\tau,0)\times(0,1),
\end{cases}
\end{equation}
    where  $u\in L^2_{loc}(\mathbb{R}^+;\mathbb{C})$,  $y_0\in L^2(0,1;\mathbb{C})$ and $\varphi\in L^2(-\tau,0;L^2(0,1;\mathbb{C}))$.
\begin{remark}\label{m-25-7-21-2}
    Two remarks on system \eqref{m-25-7-21-1} are in order.
\begin{enumerate}
  \item [$(i)$] When $\kappa=0$, system \eqref{m-25-7-21-1} is null controllable in $L^2(0,1;\mathbb{C})$ (see \cite[Theorem 3.3]{Fattorini-Russell}).
  \item [$(ii)$] When $\kappa\neq 0$, system \eqref{m-25-7-21-1} is not null controllable in $L^2(0,1;\mathbb{C})$.
  This follows from   \cite[Theorem 2.1]{Wang-Yu-Zhang},  or can alternatively be proved by using the approach in \cite[Section 5.2]{Khodja-Bouzidi-Dupaix-Maniar}.
\end{enumerate}
\end{remark}
\begin{theorem}
    The following statements hold:
\begin{enumerate}
\item [$(i)$] For each $(y_0,\varphi)^\top\in L^2(0,1;\mathbb{C})\times L^2(-\tau,0;L^2(0,1;\mathbb{C}))$ and $u\in L^2_{loc}(\mathbb{R}^+;\mathbb{C})$,
system \eqref{m-25-7-21-1} admits a unique solution in $C([0,+\infty);L^2(0,1;\mathbb{C}))$.
  \item [$(ii)$] For each  $f\in L^2(0,1;\mathbb{C})$ and $(y_0,\varphi)^\top\in L^2(0,1;\mathbb{C})\times L^2(-\tau,0;L^2(0,1;\mathbb{C}))$,  the closed-loop system
\begin{equation}\label{yu-9-22-1}
\begin{cases}
     y_t(t,x)=(\partial_x^2-\alpha ) y(t,x)+\kappa y(t-\tau,x),&(t,x)\in \mathbb{R}^+\times (0,1),\\
    y_x(t,0)=0,\;\;y_x(t,1)=\langle f, y(t)\rangle_{L^2(0,1;\mathbb{C})},&t\in \mathbb{R}^+,\\
    y(0,x)=y_0(x),&x\in(0,1),\\
    y(t,x)=\varphi(t,x),&(t,x)\in(-\tau,0)\times(0,1),
\end{cases}
\end{equation}
   admits a unique solution $y_f(\cdot;y_0,\varphi)\in C([0,+\infty);L^2(0,1;\mathbb{C}))$.
  \item [$(iii)$] For each $\gamma>0$, there exists a feedback function $f\in L^2(0,1;\mathbb{C})$ and a constant $C(\gamma)>0$ such that, for every $(y_0,\varphi)^\top\in L^2(0,1;\mathbb{C})\times L^2(-\tau,0;L^2(0,1;\mathbb{C}))$, the corresponding solution $y_f(\cdot;y_0,\varphi)$ to system \eqref{yu-9-22-1} satisfies
  \begin{equation*}\label{yu-9-22-2}
    \|y_f(t;y_0,\varphi)\|_{L^2(0,1;\mathbb{C})}\leq C(\gamma)e^{-\gamma t}
    (\|y_0\|_{L^2(0,1;\mathbb{C})}^2+\|\varphi\|^2_{L^2(-\tau,0;L^2(0,1;\mathbb{C}))})^{\frac{1}{2}}
    \;\;\mbox{for all}\;\;t\in\mathbb{R}^+.
  \end{equation*}
\end{enumerate}
\end{theorem}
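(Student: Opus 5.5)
The plan is to cast \eqref{m-25-7-21-1} as an instance of system $[A,B]_{\kappa,\tau}$ and then read off (i)--(iii) from Remark \ref{yu-remark-8-00-1}$(v)$, Remark \ref{yu-remark-11-23-2}$(i)$ and Theorem \ref{yu-theorem-9-01-1}.

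\textbf{Setting.} Take $X=L^2(0,1;\mathbb{C})$ and $U=\mathbb{C}$. Let $A=\partial_x^2-\alpha$ with the homogeneous Neumann domain $X_1=\{g\in H^2(0,1):g'(0)=g'(1)=0\}$. This operator is self-adjoint with compact resolvent. Its eigenvalues are $-(n\pi)^2-\alpha$, with eigenfunctions $e_0=1$ and $e_n=\sqrt2\cos(n\pi x)$. It therefore generates an analytic, hence immediately compact, semigroup, which gives $(A_1)$.

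The Neumann control at $x=1$ is represented in the standard way by $B=\delta_1$, that is, $\langle Bu,\psi\rangle_{X_{-1},X_1^*}=u\,\overline{\psi(1)}$. Thus $B^*\psi=\psi(1)$. Since $H^2(0,1)\hookrightarrow C[0,1]$, we get $B^*\in\mathcal L(X_1^*;U)$, which gives $(A_2)$.

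For $(A_3)$, expand $\varphi=\sum c_ne_n$. Then
\[
\int_0^T|B^*S^*(t)\varphi|^2\mathrm dt=\int_0^T\Bigl|\sum c_n e_n(1)e^{-((n\pi)^2+\alpha)t}\Bigr|^2\mathrm dt .
\]
This is bounded by $C\sum|c_n|^2$ using the standard admissibility of Neumann boundary control for the heat equation. Concretely, $B$ is even bounded into $D((-A+\rho_0)^{1/4+\epsilon})'$, and analyticity then gives admissibility. Alternatively one can apply a Carleson-measure criterion for diagonal semigroups, because $\sum_{n:\,(n\pi)^2\in[R,R+h]}1\le C(h)$.

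\textbf{Statements (i) and (ii).} Statement (i) is Remark \ref{yu-remark-8-00-1}$(v)$. For (ii), note that the feedback $y\mapsto\langle f,y\rangle$ is a bounded operator $K\in\mathcal L(X;U)$. The closed loop \eqref{yu-9-22-1} is then exactly \eqref{yu-12-20-200}, whose well-posedness is given by Remark \ref{yu-remark-11-23-2}$(i)$ (see also Remark \ref{remark-11-27-1}). Every $K\in\mathcal L(X;\mathbb C)$ has the form $\langle f,\cdot\rangle$ by Riesz, so (iii) amounts to the static rapid stabilizability of $[A,B]_{\kappa,\tau}$.

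\textbf{Statement (iii).} By Theorem \ref{yu-theorem-9-01-1}, (i)$\Rightarrow$(iii), it suffices to prove that the delay-free system $[A,B]$ is rapidly stabilizable. The simplest route is Remark \ref{m-25-7-21-2}$(i)$: $[A,B]$ is null controllable. Null controllability gives the final-state observability inequality $\|S^*(T)\varphi\|^2\le C\int_0^T|B^*S^*(s)\varphi|^2\mathrm ds$, which trivially implies the weak observability inequality of Remark \ref{yu-remark-25-9-22}$(i)$ for every $\alpha$. This yields rapid stabilizability via \cite{Liu-Wang-Xu-Yu}.

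As an independent check, one can use the frequency criterion instead. The eigenvectors of $A^*$ satisfy $B^*e_n=e_n(1)\neq0$, so the Hautus condition holds on each of the finitely many modes with $\mathrm{Re}>-\gamma$, and the tail decays at rate $\gamma$.

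When $\kappa=0$ the claim is just the delay-free case, so Theorem \ref{yu-theorem-9-01-1} is only needed for $\kappa\neq0$.

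\textbf{Main obstacle.} The only non-routine point is verifying $(A_3)$ for the Neumann boundary operator $\delta_1$. I would handle it with the interpolation/analyticity argument: $\delta_1\in H^{-1/2-\epsilon}$-dual, and the admissibility of $(-A)^{1/4+\epsilon}$-bounded controls for analytic semigroups. If that fails, I would fall back on a direct Ingham/Carleson estimate on the exponential sums above.
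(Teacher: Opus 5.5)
Your proposal is correct and follows essentially the same route as the paper. It uses the same choice of $X$, $U$ and $A$, and a control operator $\delta_1$ that coincides with the paper's $-\widetilde{A}\mathcal{N}$. Admissibility comes from $B\in\mathcal{L}(\mathbb{C};(D((-A)^{1/4+\varepsilon}))')$ together with analyticity, statements (i)--(ii) follow from Remark \ref{yu-remark-8-00-1}$(v)$ and Remark \ref{remark-11-27-1}, and (iii) follows from null controllability of the delay-free system \cite{Fattorini-Russell}, \cite[Theorem 3.4]{Liu-Wang-Xu-Yu} and Theorem \ref{yu-theorem-9-01-1}.

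Your side remark that $\kappa=0$ lies outside the hypotheses of Theorem \ref{yu-theorem-9-01-1} is a fair point that the paper does not address. It is harmless, because Step 1 of that proof never uses $\kappa\neq 0$ and so yields the required bounded feedback in that case as well.
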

\begin{proof}
    Let $X=L^2(0,1;\mathbb{C})$ and $U=\mathbb{C}$. Define
    $A:=\partial_x^2-\alpha$ with domain $D(A):=\{f\in H^2(0,1;\mathbb{C}):f_x(0)=f_x(1)=0\}$. It is well known that $A$  generates an analytic and immediately compact semigroup on $X$. Thus,  $A$ satisfies Assumption $(A_1)$.
    By the same arguments as  in $(iii)$ of Remark \ref{yu-remark-8-00-1}, the operator $A$ admits a unique extension to $D(A)'$ with domain
    $X$, where $D(A)'$ is the dual space of $D(A)$ with respect to the pivot space $X$. (Here, we note that $A=A^*$ in our setting.)  This extension is denoted by $\widetilde{A}$.
    Moreover, since $\alpha>0$, for each $f\in \mathbb{C}$, the boundary value problem:
\begin{equation*}
\left\{
\begin{array}{l}
   (\partial_x^2-\alpha)\varphi=0\;\;\mbox{in}\;\;(0,1),\\
    \varphi_x(0)=0,\;\;\varphi_x(1)=f,
\end{array}\right.
\end{equation*}
admits a unique solution $\varphi_f$ in $ H^2(0,1;\mathbb{C})$. We define the Neumann map $\mathcal{N}: U\to
X$ by $\mathcal{N}f=\varphi_f$, and set $B:=-\widetilde{A}\mathcal{N}$.
    By \cite[Section 3.3, Chapter 3]{Lasiecka-Triggiani-2000}, we have  $\mathcal{N}\in \mathcal{L}(\mathbb{C};H^{\frac{3}{2}}(0,1;\mathbb{C}))$, and
\begin{equation}\label{yu-25-9-24-3}
    B\in \mathcal{L}(\mathbb{C};(D((-A)^{\frac{1}{4}+\varepsilon}))')\;\;\mbox{for all}\;\;\varepsilon>0,
\end{equation}
    where $(D((-A)^{\frac{1}{4}+\varepsilon}))'$ denotes the dual space of $D((-A)^{\frac{1}{4}+\varepsilon})$
    with respect to the pivot space $X$.
    By
    \eqref{yu-25-9-24-3},  $B$ satisfies Assumption $(A_2)$.
    Since $A$ generates an analytic semigroup $S(\cdot)$ on $X$, it follows from \eqref{yu-25-9-24-3} that
     \eqref{yu-5-4-2} holds for each $T>0$. Therefore, system $[A,B]$ satisfies Assumption $(A_3)$.
    Consequently, assertions $(i)$ and $(ii)$ follow from $(v)$ in Remark \ref{yu-remark-8-00-1} and Remark \ref{remark-11-27-1}, respectively.
   Finally, by $(i)$ in Remark \ref{m-25-7-21-2} and \cite[Theorem 3.4]{Liu-Wang-Xu-Yu}, system \eqref{m-25-7-21-1} with $\kappa=0$ is rapidly stabilizable in $L^2(0,1;\mathbb{C})$. This, together with Theorem \ref{yu-theorem-9-01-1}, implies $(iii)$.
    The proof is complete.
\end{proof}

\subsection{Delayed heat equation with internal control}\label{yu-sec-10-1}
    Let $(a,\tau)\in\mathbb{R}\times\mathbb{R}^+$, and let $\Omega$ be a bounded, connected, open subset in $\mathbb{R}^d$ with smooth boundary $\partial\Omega$. Let $\omega_1$ and $\omega_2$ be two subsets with nonempty interior in $\Omega$.  Consider the following controlled equation:
\begin{equation}\label{yu-25-9-24-4}
\begin{cases}
    y_t(t,x)=\triangle y(t,x)+a\chi_{\omega_1}y(t-\tau,x)+\chi_{\omega_2}u(t,x),&(t,x)\in\mathbb{R}^+\times\Omega,\\
    y(t,x)=0,&(t,x)\in \mathbb{R}^+\times\partial\Omega,\\
    y(0,x)=y_0(x),&x\in \Omega,\\
   y(t,x)=\varphi(t,x),&(t,x)\in(-\tau,0)\times\Omega,
\end{cases}
\end{equation}
    where $y_0\in L^2(\Omega;\mathbb{C})$, $\varphi\in L^2(-\tau,0;L^2(\Omega;\mathbb{C}))$ and $u\in L^2_{loc}(\mathbb{R}^+;L^2(\Omega;\mathbb{C}))$.
\begin{remark}\label{yu-remark-25-9-24-1}
    Several remarks  concerning system \eqref{yu-25-9-24-4} are in order.
\begin{enumerate}
  \item [$(i)$] For each $(y_0,\varphi)\in L^2(\Omega;\mathbb{C})\times L^2(-\tau,0;L^2(\Omega;\mathbb{C}))$ and $u\in L^2_{loc}(\mathbb{R}^+;L^2(\Omega;\mathbb{C}))$, system \eqref{yu-25-9-24-4} has a unique solution in
      $C([0,+\infty);L^2(\Omega;\mathbb{C}))$.
  \item [$(ii)$] When $a=0$, system \eqref{yu-25-9-24-4} is null controllable in $L^2(\Omega;\mathbb{C})$ (see \cite[Section 5.2.5.1, Chapter 5]{Trelat}). Consequently, by \cite[Theorem 1.1]{Liu-Wang-Xu-Yu}, it is rapidly stabilizable in $L^2(\Omega;\mathbb{C})$.
  \item [$(iii)$]  When $a\neq 0$, system \eqref{yu-25-9-24-4} is not null controllable in $L^2(\Omega;\mathbb{C})$. This follows from \cite[Theorem 2.1]{Wang-Yu-Zhang} or can be proved by using the approach in \cite[Section 5.2]{Khodja-Bouzidi-Dupaix-Maniar}.
\end{enumerate}
\end{remark}
\begin{theorem}\label{yu-theorem-10-1-1}
    Suppose that $\omega_1 \cup \omega_2 = \Omega$.  Then, for each $\gamma>0$, there exists an operator
$F_\gamma \in \mathcal{L}(L^2(\Omega;\mathbb{C}))$ such that, defining $K_\gamma: L^2(\Omega;\mathbb{C}) \times C([-\tau,0];L^2(\Omega;\mathbb{C})) \to L^2(\Omega;\mathbb{C})$ by
\begin{equation}\label{yu-25-9-24-9-bbb}
    K_\gamma(h,\varphi)^\top:=F_\gamma h+a\chi_{\Omega\setminus\omega_1}\varphi(-\tau),\;\;(h,\varphi)^\top\in
    L^2(\Omega;\mathbb{C})\times C([-\tau,0];L^2(\Omega;\mathbb{C})),
\end{equation}
     and setting
\begin{equation}\label{yu-25-9-24-10}
    L_\gamma(t):=
\begin{cases}
    0&\mbox{if}\;\;t\in[0,\tau],\\
    K_\gamma&\mbox{if}\;\;t\in(\tau,+\infty),
\end{cases}
\end{equation}
     we have
\begin{equation}\label{yu-26-4-16-1}
    K_\gamma\in\mathcal{L}(L^2(\Omega;\mathbb{C})\times C([-\tau,0];L^2(\Omega;\mathbb{C}));L^2(\Omega;\mathbb{C})),
\end{equation}
  and the following statements hold:
    \begin{enumerate}
      \item [$(i)$] For each $(y_0,\varphi)^\top\in L^2(\Omega;\mathbb{C})\times L^2(-\tau,0;L^2(\Omega;\mathbb{C}))$,
      the closed-loop system
\begin{equation}\label{yu-25-9-24-50}
\begin{cases}
    y_t(t,x)=\triangle y(t,x)+a\chi_{\omega_1}y(t-\tau,x)\\
    \;\;\;\;\;\;\;\;\;\;\;\;\;\;\;\;+\chi_{\omega_2}L_\gamma(t)(y(t,\cdot),y(t+\cdot,\cdot))^\top(x),
    &(t,x)\in\mathbb{R}^+\times\Omega,\\
    y(t,x)=0,&(t,x)\in \mathbb{R}^+\times\partial\Omega,\\
    y(0,x)=y_0(x),&x\in \Omega,\\
    y(t,x)=\varphi(t,x),&(t,x)\in(-\tau,0)\times\Omega,
\end{cases}
\end{equation}
     admits a unique mild solution $y_\gamma(\cdot;y_0,\varphi)\in C([0,+\infty);L^2(\Omega;\mathbb{C}))$.
        \item [$(ii)$] For each $(y_0,\varphi)^\top\in L^2(\Omega;\mathbb{C})\times L^2(-\tau,0;L^2(\Omega;\mathbb{C}))$, the function $f: \mathbb{R}^+\rightarrow L^2(\Omega;\mathbb{C})$, defined by
        $$
        f(t):=L_\gamma(t)(y_\gamma(t;y_0,\varphi),y_\gamma(t+\cdot;y_0,\varphi))^\top\;\;\mbox{for a.e.}\;\;t\in \mathbb{R}^+,
        $$
             belongs to $L^2(\mathbb{R}^+;L^2(\Omega;\mathbb{C}))$.
      \item [$(iii)$] There exists a constant $C(\gamma)>0$ such that, for any $(y_0,\varphi)^\top\in L^2(\Omega;\mathbb{C})\times L^2(-\tau,0;L^2(\Omega;\mathbb{C}))$,
\begin{equation}\label{yu-25-9-24-51}
     \|y_\gamma(t;y_0,\varphi)\|_{L^2(\Omega;\mathbb{C})}\leq C(\gamma)e^{-\gamma t}
    (\|y_0\|_{L^2(\Omega;\mathbb{C})}^2+\|\varphi\|^2_{L^2(-\tau,0;L^2(\Omega;\mathbb{C}))})^{\frac{1}{2}}
    \;\;\mbox{for all}\;\;t\in\mathbb{R}^+.
\end{equation}
    \end{enumerate}
\end{theorem}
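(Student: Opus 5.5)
The key observation is that, since $\omega_1\cup\omega_2=\Omega$, we have $\chi_{\Omega\setminus\omega_1}=\chi_{\omega_2}\chi_{\Omega\setminus\omega_1}$. Hence the feedback $K_\gamma$ applied through $\chi_{\omega_2}$ adds $a\chi_{\Omega\setminus\omega_1}y(t-\tau)$ for $t>\tau$. This turns the delay term $a\chi_{\omega_1}y(t-\tau)$ into $a\,y(t-\tau)$, i.e.\ a constant delay coefficient $\kappa=a$. So for $t>\tau$ the closed-loop system \eqref{yu-25-9-24-50} reads
\[
y_t=\triangle y+a\,y(t-\tau)+\chi_{\omega_2}F_\gamma y(t),
\]
which is exactly \eqref{yu-12-20-200} with $X=U=L^2(\Omega;\mathbb{C})$, $A=\triangle$ with Dirichlet domain, $B=\chi_{\omega_2}$ (bounded) and $\kappa=a$. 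The plan is therefore: verify $(A_1)$--$(A_3)$, invoke Remark~\ref{yu-remark-25-9-24-1}$(ii)$ for rapid stabilizability of $[A,B]$, apply Theorem~\ref{yu-theorem-9-01-1} $(i)\Rightarrow(iii)$ to get $F_\gamma\in\mathcal{L}(L^2(\Omega;\mathbb{C}))$, and then handle the initial interval $[0,\tau]$ separately. If $a=0$, the added term vanishes and the result is just the delay-free case.

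Step 1, the easy parts. $(A_1)$ holds because the Dirichlet Laplacian on a bounded smooth domain generates an analytic semigroup with compact resolvent, hence an immediately compact one. $(A_2)$ and $(A_3)$ are trivial since $B$ is bounded. The bound \eqref{yu-26-4-16-1} is immediate: $\|K_\gamma(h,\varphi)^\top\|\le\|F_\gamma\|\|h\|+|a|\|\varphi\|_{C([-\tau,0];L^2)}$.

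Step 2, well-posedness $(i)$. I will split into two intervals. On $[0,\tau]$ there is no feedback, so \eqref{yu-25-9-24-50} is the open-loop system \eqref{yu-11-23-1} with $u=0$ (with $\kappa\chi_{\omega_1}$ a bounded multiplier; the state-space argument of Section~\ref{sec-yu-10-1-2} goes through verbatim with a bounded delay operator in place of $\kappa I$). It therefore has a unique solution $y\in C([0,\tau];L^2)$, bounded by $C(\|y_0\|+\|\varphi\|_{L^2(-\tau,0;L^2)})$. For $t>\tau$ the history $y(t+\cdot)$ restricted to $[-\tau,0]$ is continuous, so $K_\gamma$ is applied to continuous histories and the term $\varphi(-\tau)$ makes sense. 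On $(\tau,\infty)$ I solve \eqref{yu-12-20-200} with initial data $y(\tau)$ and history $y(\tau+\cdot)|_{(-\tau,0)}\in L^2$. This system is well posed by Remark~\ref{remark-11-27-1}, or alternatively as a bounded perturbation of the semigroup of Lemma~\ref{yu-lemma-4-16-1}. Gluing the two pieces gives the unique mild solution.

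Step 3, the estimates $(iii)$ and $(ii)$. On $(\tau,\infty)$, Theorem~\ref{yu-theorem-9-01-1}$(iii)$ (choosing the feedback for rate $\gamma+1$) gives
\[
\|y(t)\|\le C e^{-(\gamma+1)(t-\tau)}\bigl(\|y(\tau)\|^2+\|y(\tau+\cdot)\|^2_{L^2(-\tau,0)}\bigr)^{1/2}.
\]
Combining this with the bound on $[0,\tau]$ yields \eqref{yu-25-9-24-51}. For $(ii)$, $f=0$ on $[0,\tau]$, and for $t>\tau$,
\[
\|f(t)\|\le\|F_\gamma\|\|y(t)\|+|a|\|y(t-\tau)\|,
\]
where $t-\tau>0$ and both terms decay exponentially; hence $f\in L^2(\mathbb{R}^+;L^2)$.

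The main obstacle is Step 2: ensuring the history in the feedback is a continuous function so that the point evaluation at $-\tau$ is legitimate, which is exactly why $L_\gamma$ vanishes on $[0,\tau]$, together with making the gluing argument rigorous. A secondary point is justifying that a bounded multiplier $a\chi_{\omega_1}$ in the delay term is covered by the preliminaries on $[0,\tau]$.
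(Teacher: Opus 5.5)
Your proposal is correct and follows essentially the same route as the paper. You obtain $F_\gamma$ from Theorem~\ref{yu-theorem-9-01-1} with $\kappa=a$ and $B=\chi_{\omega_2}$, solve the feedback-free problem on $[0,\tau]$, restart at $t=\tau$ with the continuous history $y(\tau+\cdot)$, and use $\chi_{\omega_2}\chi_{\Omega\setminus\omega_1}=\chi_{\Omega\setminus\omega_1}$ to identify the closed loop with the constant-delay static-feedback system. The only differences are cosmetic: choosing the feedback for rate $\gamma+1$ is unnecessary, since rate $\gamma$ suffices with $e^{\gamma\tau}$ absorbed into the constant, and you bound $f(t)$ directly by $\|F_\gamma\|\|y(t)\|+|a|\|y(t-\tau)\|$ rather than through the $C([-\tau,0];L^2)$ norm.
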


\begin{proof}
    Let $X:=L^2(\Omega;\mathbb{C})$ and $U:=L^2(\Omega;\mathbb{C})$. Define $A:=\triangle$ with domain $D(A):=H_0^1(\Omega;\mathbb{C})\cap H^2(\Omega;\mathbb{C})$, and let $B:=\chi_{\omega_2}$.
    Then system $[A,B]$ satisfies Assumptions $(A_1)$, $(A_2)$ and $(A_3)$. Combining this with $(ii)$ in Remark \ref{yu-remark-25-9-24-1} and Theorem \ref{yu-theorem-9-01-1}, we deduce that for each $\gamma>0$, there exist $F_\gamma\in \mathcal{L}(X;U)$ and $C_1(\gamma)>0$ such that, for any $(y_0,\varphi)\in X\times L^2(-\tau,0;X)$, the solution $z_{F_\gamma}(\cdot;y_0,\varphi)$ to
\begin{equation}\label{yu-25-9-24-7}
\begin{cases}
    z_t(t,x)=\triangle z(t,x)+az(t-\tau,x)+\chi_{\omega_2}(x)F_\gamma [z(t,\cdot)](x),&(t,x)\in\mathbb{R}^+\times\Omega,\\
    z(t,x)=0,&(t,x)\in \mathbb{R}^+\times\partial\Omega,\\
    z(0,x)=y_0(x),&x\in \Omega,\\
    z(t,x)=\varphi(t,x),&(t,x)\in(-\tau,0)\times\Omega,
\end{cases}
\end{equation}
    satisfies
\begin{equation}\label{yu-25-9-24-8}
    \|z_{F_\gamma}(t;y_0,\varphi)\|_{X}\leq C_1(\gamma)e^{-\gamma t}
    (\|y_0\|_{X}^2+\|\varphi\|^2_{L^2(-\tau,0;X)})^{\frac{1}{2}}
    \;\;\mbox{for all}\;\;t\in\mathbb{R}^+.
\end{equation}
Here, we note that the equation \eqref{yu-25-9-24-7} has a unique solution in $C([0,+\infty);X)$ (see Remark~\ref{remark-11-27-1} below).
    Fix $\gamma>0$.  Let  $K_\gamma: X\times C([-\tau,0];X)\to U$ be defined by \eqref{yu-25-9-24-9-bbb}.
  Then, for any $(h,\varphi)^\top\in X\times C([-\tau,0];X)$,
\begin{equation}\label{yu-25-9-24-9-b}
    \|K_\gamma(h,\varphi)^\top\|_U\leq (\|F_\gamma\|_{\mathcal{L}(X;U)}+|a|)
    (\|h\|_X+\max_{t\in[-\tau,0]}\|\varphi(t)\|_X),
\end{equation}
   which implies \eqref{yu-26-4-16-1}.

\par
    In the rest of the proof, we show $(i)$-$(iii)$ one by one. Before doing it, we first observe
     that, since $\omega_1\cup \omega_2=\Omega$, $\chi_{\omega_2}\chi_{\Omega\setminus\omega_1}=\chi_{\Omega\setminus\omega_1}$ and the  following claim holds: \textbf{(CL)} \emph{for any $(y_0,\varphi)^\top\in X\times C([-\tau,0];X)$,
    $z_{F_\gamma}(\cdot;y_0,\varphi)$ is the solution to \eqref{yu-25-9-24-7} if and only if it is the solution to the following equation:}
\begin{equation*}\label{yu-25-9-24-9}
\begin{cases}
    y_t(t,x)=\triangle y(t,x)+a\chi_{\omega_1}y(t-\tau,x)+\chi_{\omega_2}K_\gamma(y(t,\cdot),y(t+\cdot,\cdot))^\top(x),
    &(t,x)\in\mathbb{R}^+\times\Omega,\\
    y(t,x)=0,&(t,x)\in \mathbb{R}^+\times\partial\Omega,\\
    y(0,x)=y_0(x),&x\in \Omega,\\
    y(t,x)=\varphi(t,x),&(t,x)\in(-\tau,0)\times\Omega.
\end{cases}
\end{equation*}

\vskip 5pt
\emph{Proof of $(i)$.}  Let $(y_0,\varphi)\in X\times L^2(-\tau,0;X)$. By \eqref{yu-25-9-24-10}, the system \eqref{yu-25-9-24-50} on $[0,\tau]$ reduces to
\begin{equation}\label{yu-25-9-25-1}
\begin{cases}
    p_t(t,x)=\triangle p(t,x)+a\chi_{\omega_1}p(t-\tau,x),
    &(t,x)\in(0,\tau)\times\Omega,\\
    p(t,x)=0,&(t,x)\in (0,\tau)\times\partial\Omega,\\
    p(0,x)=y_0(x),&x\in \Omega,\\
   p(t,x)=\varphi(t,x),&(t,x)\in(-\tau,0)\times\Omega.
\end{cases}
\end{equation}
    By standard energy estimates, \eqref{yu-25-9-25-1} admits a unique solution $p\in C([0,\tau];X)$ and there exists  $C_2(\tau)>0$ such that
\begin{equation}\label{yu-25-9-25-2}
    \max_{t\in[0,\tau]}\|p(t)\|_X\leq C_2(\tau)
    (\|y_0\|_{X}^2+\|\varphi\|^2_{L^2(-\tau,0;X)})^{\frac{1}{2}}.
\end{equation}
   Next, let  $q$ be the unique solution to the following equation:
\begin{equation}\label{yu-25-9-25-3}
\begin{cases}
    q_t(t,x)=\triangle q(t,x)+a\chi_{\omega_1}q(t-\tau,x)+\chi_{\omega_2}K_\gamma[(q(t,\cdot),q(t+\cdot,\cdot))^\top](x),
    &(t,x)\in\mathbb{R}^+\times\Omega,\\
    q(t,x)=0,&(t,x)\in \mathbb{R}^+\times\partial\Omega,\\
    q(0,x)=p(\tau,x),&x\in \Omega,\\
   q(t,x)=p(t+\tau,x),&(t,x)\in(-\tau,0)\times\Omega.
\end{cases}
\end{equation}
   By \textbf{(CL)} and \eqref{yu-25-9-24-8},
\begin{equation}\label{yu-25-9-25-4}
\begin{array}{lll}
    \|q(t)\|_X&\leq& C_1(\gamma)e^{-\gamma t} (\|p(\tau)\|_{X}^2+\|p(\cdot+\tau)\|^2_{L^2(-\tau,0;X)})^{\frac{1}{2}}\\
    &\leq&\sqrt{1+\tau} C_1(\gamma)e^{-\gamma t}\max_{s\in[0,\tau]}\|p(s)\|_X\;\;\mbox{for all}\;\;t\in\mathbb{R}^+.
\end{array}
\end{equation}
    Define
\begin{equation}\label{yu-25-9-25-5-b}
 y_\gamma(t;y_0,\varphi):=
\begin{cases}
    p(t)&\mbox{if}\;\;t\in (-\tau,\tau],\\
    q(t-\tau)&\mbox{if}\;\;t\in(\tau,+\infty).
\end{cases}
\end{equation}
    Then, by \eqref{yu-25-9-24-10}, \eqref{yu-25-9-25-1} and \eqref{yu-25-9-25-3},  $y_\gamma(\cdot;y_0,\varphi)$ belongs to $C([0,+\infty);X)$ and solves the equation
    \eqref{yu-25-9-24-50}. Thus, system \eqref{yu-25-9-24-50} admits a solution. Uniqueness follows by the same decomposition argument. Hence
     $(i)$ holds and the solution of system \eqref{yu-25-9-24-50} is given by \eqref{yu-25-9-25-5-b}.

\vskip 5pt
\emph{Proof of $(ii)$.} By \eqref{yu-25-9-25-2}, \eqref{yu-25-9-25-4} and \eqref{yu-25-9-25-5-b}, we have
\begin{equation}\label{yu-25-9-25-6}
    \|y_\gamma(t;y_0,\varphi)\|_X\leq \sqrt{1+\tau}C_1(\gamma)C_2(\tau)e^{\gamma\tau} e^{-\gamma t}(\|y_0\|_{X}^2+\|\varphi\|^2_{L^2(-\tau,0;X)})^{\frac{1}{2}}\;\;\mbox{for all}\;\;t\geq \tau.
\end{equation}
 This, along with \eqref{yu-25-9-24-9-b} and \eqref{yu-25-9-24-10}, implies that
\begin{eqnarray*}\label{yu-25-9-25-7}
    &\;&\|f(t)\|_U\nonumber\\&=&\|L_\gamma(t)(y_\gamma(t;y_0,\varphi), y_\gamma(t+\cdot;y_0,\varphi))^\top\|_U\nonumber\\
    &\leq&
\begin{cases}
    0&\mbox{if}\;\;t\in[0,\tau],\\
    2(\|F_\gamma\|_{\mathcal{L}(X;U)}+|a|)\max_{s\in[0,2\tau]}\| y_\gamma(s;y_0,\varphi)\|_X
    &\mbox{if}\;\;t\in(\tau,2\tau],\\
    \sqrt{1+\tau}(\|F_\gamma\|_{\mathcal{L}(X;U)}+|a|)C_1(\gamma)C_2(\tau)(1+e^{\gamma\tau})e^{\gamma\tau} e^{-\gamma t}
    (\|y_0\|_{X}^2+\|\varphi\|^2_{L^2(-\tau,0;X)})^{\frac{1}{2}}&\mbox{if}\;\;t>2\tau.
\end{cases}
\end{eqnarray*}
   Thus, $(ii)$ holds.
\vskip 5pt
   \emph{Proof of $(iii)$.} From \eqref{yu-25-9-25-2} and \eqref{yu-25-9-25-5-b}, we have that for $t\in[0,\tau]$,
\begin{equation*}\label{yu-25-9-25-8}
     \|y_\gamma(t;y_0,\varphi)\|_X \leq C_2(\tau)e^{\gamma\tau}e^{-\gamma t}(\|y_0\|_{X}^2+\|\varphi\|^2_{L^2(-\tau,0;X)})^{\frac{1}{2}}
     \;\;\mbox{for each}\;\;t\in[0, \tau].
\end{equation*}
    Together with \eqref{yu-25-9-25-6}, this yields \eqref{yu-25-9-24-51} with
    $C(\gamma):=\sqrt{1+\tau}(C_1(\gamma)+1)C_2(\tau)e^{\gamma\tau}$. Hence, $(iii)$ holds.
\par
    This completes the proof.
\end{proof}
\begin{remark}
  Theorem \ref{yu-theorem-10-1-1} not only shows that system \eqref{yu-25-9-24-4} is rapidly stabilizable, but, through its proof combined with that of Theorem \ref{yu-theorem-9-01-1}, it also provides an effective design scheme for the feedback law.
    In particular, by \eqref{yu-25-9-24-9-bbb} and \eqref{yu-25-9-24-10}, the feedback control constructed in  Theorem \ref{yu-theorem-10-1-1} is given by
\begin{equation}\label{yu-6-25-1}
    L_\gamma(t)(y(t),y(t+\cdot))=
    \begin{cases}
        0&\mbox{if}\;\;t\in(0,\tau),\\
        F_\gamma y(t)+a\chi_{\Omega\setminus\omega_1}y(t-\tau)&\mbox{if}\;\;t\in(\tau,+\infty),
    \end{cases}
\end{equation}
    where $y(\cdot)$ denotes the state trajectory. Therefore, the feedback control at time $t$ depends only on the current state $y(t)$ and the delayed state $y(t-\tau)$.
This reveals a remarkably simple structure of the stabilizing feedback law, which can be interpreted as a low-complexity dynamic feedback mechanism.
In contrast to classical approaches based on the state-space approach, the present design is explicitly implementable.

\end{remark}
\section{Appendix}\label{yu-sec-6.2}

In this section, let $\tau>0$, $E\in \mathcal{L}(X)$ and $F\in\mathcal{L}(X\times L^2(-\tau,0;X);U)$ be fixed. We consider the well-posedness of the following equation:
\begin{equation}\label{yu-11-26-10}
\begin{cases}
    y_t(t)=Ay(t)+Ey(t-\tau)+B[F(y(t),
    y(t+\cdot))^\top],&t\in\mathbb{R}^+,\\
    y(0)=y_0,\\
    y(t)=\varphi(t),&t\in(-\tau,0),
\end{cases}
\end{equation}
    where $y_0\in X$ and $\varphi\in L^2(-\tau,0;X)$.
\begin{proposition}\label{yu-proposition-11-26-2}
    Suppose that  assumptions $(A_1)$-$(A_3)$ hold. Then for each $y_0\in X$ and $\varphi\in L^2(-\tau,0;X)$, \eqref{yu-11-26-10} has a unique solution in $C([0,+\infty);X)$.
\end{proposition}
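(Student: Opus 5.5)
We plan to reduce \eqref{yu-11-26-10} to an abstract feedback perturbation problem on the product space $\mathcal{X}=X\times L^2(-\tau,0;X)$ and then apply a fixed-point argument on short time intervals, exactly as is done for unbounded control operators with bounded feedback. By Lemma~\ref{yu-lemma-4-16-1} (with $\kappa y(t-\tau)$ replaced by $Ey(t-\tau)$, whose proof only uses $E\in\mathcal{L}(X)$), the operator $\mathcal{A}_E(f_1,f_2)^\top:=(Af_1+Ef_2(-\tau),\partial_\theta f_2)^\top$ on the analogous domain generates a $C_0$-semigroup $\mathcal{S}_E(\cdot)$ on $\mathcal{X}$. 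The adjoint has the same structure as \eqref{yu-4-9-12}--\eqref{yu-4-12-3}, with the boundary condition $\xi_2(-\tau)=E^*\xi_1$. Lemmas~\ref{yu-lemma-10-13-1} and \ref{yu-lemma-10-13-2} then carry over verbatim, so $\mathcal{B}=(B,0)^\top$ is an admissible control operator for $\mathcal{S}_E(\cdot)$. The only change in Step~1 of Lemma~\ref{yu-lemma-10-13-2} is that $\xi(t)=S^*(t)\varphi_1+\int_0^tS^*(t-s)E^*\ldots$ still contains only $X$-valued bounded data. Likewise, Proposition~\ref{yu-proposition-10-14-1} carries over, and identifies solutions of the delayed equation with control $u$ with mild solutions of $Y_t=\mathcal{A}_EY+\mathcal{B}u$.

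We then look for $Y\in C([0,T];\mathcal{X})$ satisfying
\[
Y(t)=\mathcal{S}_E(t)Y_0+\int_0^t\widetilde{\mathcal{S}}_E(t-s)\mathcal{B}FY(s)\,\mathrm ds,\qquad Y_0=(y_0,\varphi)^\top,
\]
which is meaningful because $F\in\mathcal{L}(\mathcal{X};U)$. Admissibility gives, as in \eqref{wang-11-02-1}, the estimate
\[
\Bigl\|\int_0^t\widetilde{\mathcal{S}}_E(t-s)\mathcal{B}u(s)\,\mathrm ds\Bigr\|_{\mathcal{X}}\le C(T_1)^{1/2}\|u\|_{L^2(0,t;U)}
\]
for $t\le T_1$, and the left-hand side is continuous in $t$ (Remark~\ref{yu-remark-8-00-1}(iv) in this setting). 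Hence the map $\Gamma(Y):=\mathcal{S}_E(\cdot)Y_0+\int_0^\cdot\widetilde{\mathcal{S}}_E(\cdot-s)\mathcal{B}FY(s)\,\mathrm ds$ sends $C([0,T];\mathcal{X})$ into itself and satisfies
\[
\|\Gamma Y-\Gamma Z\|_{C([0,T];\mathcal{X})}\le C(T_1)^{1/2}\|F\|\sqrt{T}\,\|Y-Z\|_{C([0,T];\mathcal{X})}
\]
for $T\le T_1$. Choosing $T$ small, independently of the initial data, makes $\Gamma$ a contraction, and gives a unique fixed point on $[0,T]$.

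To obtain a global solution we iterate on $[kT,(k+1)T]$, using $Y(kT)$ as the new initial datum. The semigroup property of $\mathcal{S}_E$ and the shift identity for the convolution, as in the proof of \eqref{yu-1-10-1}, show that the concatenation is again a fixed point on $[0,\infty)$. Uniqueness on each interval gives global uniqueness. Finally, writing $Y=(y_1,y_2)^\top$, Proposition~\ref{yu-proposition-10-14-1}(ii) with $u:=FY\in L^2_{loc}(\mathbb{R}^+;U)$ shows that $y$, defined by \eqref{yu-12-20-8}, solves \eqref{yu-11-26-10}, with $y_2(t)=y(t+\cdot)$, so that $FY(t)=F(y(t),y(t+\cdot))^\top$. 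Conversely, any solution $y\in C([0,\infty);X)$ of \eqref{yu-11-26-10} yields, by part (i) of that proposition, a fixed point of $\Gamma$. Uniqueness of $y$ therefore follows from uniqueness of the fixed point.

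We expect the main obstacle to be the transfer of Lemmas~\ref{yu-lemma-4-16-1}--\ref{yu-lemma-10-13-2} and Proposition~\ref{yu-proposition-10-14-1} from the scalar coefficient $\kappa$ to a general $E\in\mathcal{L}(X)$, in particular the density argument of Step~2 in Lemma~\ref{yu-lemma-10-13-2}. There the regularizer $\mathcal{R}_n$ must preserve the boundary condition $\xi_2(-\tau)=E^*\xi_1$, which fails when $E^*$ does not commute with $(nI-A^*)^{-1}$. Our first attempt to avoid this will be to note that $\mathcal{A}_E=\mathcal{A}_0+\mathcal{E}$, where $\mathcal{E}(f_1,f_2)^\top:=(Ef_2(-\tau),0)^\top$. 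This operator $\mathcal{E}$ is itself an admissible (Miyadera-type) perturbation, so we can prove admissibility of $\mathcal{B}$ for $\mathcal{A}_0$ first, where $\kappa=0$ and the regularization commutes. We then transfer it to $\mathcal{A}_E$ by the variation-of-constants formula and a Gronwall argument on short intervals.
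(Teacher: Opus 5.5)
Your argument is correct in outline, but it follows a different route from the paper.

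\textbf{The paper's route.} The paper does not lift to $\mathcal{X}$ here. It solves the integral equation \eqref{yu-11-26-11} directly in $X$, by a contraction on windows of length $T\le\tau$ (with $\tau/T\in\mathbb{N}^+$).
\begin{itemize}
\item On such a window the delay term $Ey(s-\tau)$ involves only already-known values, so it is a known bounded forcing.
\item The history-dependent feedback is controlled by the elementary bound \eqref{yu-11-27-2}.
\item The only admissibility used is that of $B$ for $S(\cdot)$, i.e.\ \eqref{yu-11-26-12}, which comes from $(A_3)$.
\end{itemize}
In that formulation it does not matter that $E$ is operator-valued, so the obstacle you flag never appears.

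\textbf{What your route buys and costs.} Your product-space formulation gives an autonomous problem: every restart is the same fixed-point problem, and the solution map is directly a $C_0$-semigroup on $\mathcal{X}$. This is exactly the form \eqref{yu-11-24-24} the paper uses later in Sub-step 1.3. The cost is redoing Lemmas \ref{yu-lemma-4-16-1}--\ref{yu-lemma-10-13-2} and Proposition \ref{yu-proposition-10-14-1} for a general $E$.

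\textbf{A simpler admissibility transfer.} You do not need Miyadera--Voigt theory or Gronwall; argue on the forward side instead.
\begin{itemize}
\item For $Y_0=0$ and $t\le\tau$ the delay term vanishes. The state is then $\bigl(\Phi_tu,\ \Phi_{t+\cdot}u\,\mathbf{1}_{\{t+\cdot\ge 0\}}\bigr)$, with $\Phi_tu:=\int_0^t\widetilde S(t-s)Bu(s)\,ds$.
\item By $(A_3)$ this state is bounded by $C\|u\|_{L^2(0,t;U)}$.
\item The weak-solution identity against $D(\mathcal{A}_E^*)$ is the same computation as in Proposition \ref{yu-proposition-10-14-1}$(i)$, with $\xi_2(-\tau)=E^*\xi_1$ replacing $\kappa\xi_1$.
\item Duality then gives \eqref{yu-10-8-1} for $\mathcal{S}_E^*$, with no regularizer needed.
\end{itemize}

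A minor correction: for $t<\tau$ the first component of $\mathcal{S}_E^*(t)\varphi$ is $S^*(t)\varphi_1+\int_0^tS^*(t-s)\varphi_2(-s)\,ds$. No $E^*$ appears there; $E^*$ enters only through the boundary condition, after time $\tau$.
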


\begin{proof}
    Let $y_0\in X$ and $\varphi\in L^2(-\tau,0;X)$ be fixed arbitrarily.
    It suffices to show that the following equation:
\begin{equation}\label{yu-11-26-11}
\begin{cases}
    y(t)=S(t)y_0+\int_0^tS(t-s)Ey(s-\tau)\mathrm ds+\int_0^t\widetilde{S}(t-s)B[F(y(s),
    y(s+\cdot))^\top]\mathrm ds,
    &t\in [0,+\infty),\\
    y(t)=\varphi(t),&t\in(-\tau,0)
\end{cases}
\end{equation}
    has a unique solution in $C([0,+\infty);X)$.
     We will use the Banach fixed-point theorem to prove it.

Indeed, according to the same arguments as those led to \eqref{wang-11-02-1},  Assumption $(A_3)$ and $(iv)$ in Remark \ref{yu-remark-8-00-1},
there exists a constant $C(\tau)>0$ so that for any $u\in L^2(0,\tau;U)$,
\begin{equation}\label{yu-11-26-12}
    \left\|\int_0^t\widetilde{S}(t-s)Bu(s)\mathrm ds\right\|^2_X\leq
    C(\tau)\int_0^t\|u(s)\|_U^2\mathrm ds\;\;\mbox{for all}\;\;t\in[0,\tau].
\end{equation}
Let $T\in(0,\tau]$ be small enough so that
\begin{equation}\label{yu-11-26-15}
    \sqrt{(1+\tau)TC(\tau)}\|F\|_{\mathcal{L}(X\times L^2(-\tau,0;X);U)}<1\;\;\mbox{and}\;\;n^*:=\tau/T\in\mathbb{N}^+.
\end{equation}
    Define $\mathcal{E}_{T,y_0,\varphi}:=\{f:(-\tau,T]\to X\,|\,
    f|_{[0,T]}\in C([0,T];X),\;f(0)=y_0,\;f|_{(-\tau,0)}=\varphi\}$.
    It is clear that $\mathcal{E}_{T,y_0,\varphi}$ is a complete metric space with the metric  $d(f,g):=\max_{t\in[0,T]}\|f(t)-g(t)\|_X$
    for any $f,g\in \mathcal{E}_{T,y_0,\varphi}$. Define an operator $\mathcal{G}$ on $\mathcal{E}_{T,y_0,\varphi}$ as follows:
\begin{equation*}\label{yu-11-26-16}
    \mathcal{G}[y](t):=
\begin{cases}
    S(t)y_0+\int_0^tS(t-s)E\varphi(s-\tau)\mathrm ds
    +\int_0^t\widetilde{S}(t-s)B[F(y(s),
    y(s+\cdot))^\top]\mathrm ds,
    &t\in[0,T],\\
    \varphi(t),&t\in(-\tau,0).
\end{cases}
\end{equation*}
    Note that for each $y\in\mathcal{E}_{T,y_0,\varphi}$,
\begin{equation}\label{yu-11-27-2}
\begin{array}{lll}
    \int_0^T\left(\|y(t)\|_X^2+\int_{-\tau}^0\|y(t+\theta)\|_X^2\mathrm d\theta\right)\mathrm dt
    &\leq& \int_0^T \|y(t)\|_X^2\mathrm dt+
    \int_{-\tau}^0\int_{\theta}^{T+\theta}\|y(\sigma)\|_X^2 \mathrm d\sigma \mathrm d\theta\\
     &\leq& (1+\tau)T \max_{t\in[0,T]}\|y(t)\|_X^2+\tau\int_{-\tau}^0
    \|\varphi(\sigma)\|_X^2\mathrm d\sigma.
\end{array}
\end{equation}
   Take $z_1,z_2\in \mathcal{E}_{T,y_0,\varphi}$ arbitrarily. Since $F$ is bounded and $z_1-z_2\in\mathcal{E}_{T,0,0}$, it follows from
    \eqref{yu-11-27-2} and \eqref{yu-11-26-12} that
\begin{equation*}\label{yu-11-26-17}
\max_{t\in[0,T]}\|\mathcal{G}[z_1](t)-\mathcal{G}[z_2](t)\|_X\leq
    \sqrt{(1+\tau)TC(\tau)}\|F\|_{\mathcal{L}(X\times L^2(-\tau,0;X);U)}\max_{t\in[0,T]}\|z_1(t)-z_2(t)\|_X.
\end{equation*}
    This, along with \eqref{yu-11-26-15}, implies that $\mathcal{G}$ is a contraction mapping from
    $\mathcal{E}_{T,y_0,\varphi}$ to itself. Then by Banach fixed-point theorem, there exists a unique $y^1\in \mathcal{E}_{T,y_0,\varphi}$
    so that $y^1(t)=\mathcal{G}[y^1](t)$ for each $t\in[0,T]$, which, combined with the definition of $\mathcal{G}$, indicates that the equation:
\begin{equation*}\label{yu-11-26-18}
    y(t)=S(t)y_0+\int_0^tS(t-s)E\varphi(s-\tau)\mathrm ds
    +\int_0^t\widetilde{S}(t-s)B[F(y(s),
    y(s+\cdot))^\top]\mathrm ds,
    \;\;t\in[0,T]
\end{equation*}
   has a unique solution $y^1$ in $\mathcal{E}_{T,y_0,\varphi}$. By a standard stepwise extension argument, one constructs a unique solution on
$[0,\tau]$ by concatenating solutions on subintervals of length
$T$. Iterating this procedure yields a function
$y\in C([0,+\infty);X)$ solving \eqref{yu-11-26-11}.

The uniqueness follows from uniqueness on each subinterval.
The proof is complete.
    \end{proof}
\begin{remark}\label{remark-11-27-1}
    When $F\in\mathcal{L}(X;U)$, it can be regarded as an element in $\mathcal{L}(X\times L^2(-\tau,0;X);U)$. Hence, by Proposition
    \ref{yu-proposition-11-26-2}, we  conclude that for each $y_0\in X$ and $\varphi\in L^2(-\tau,0;X)$, the equation:
\begin{equation*}
    \begin{cases}
    y_t(t)=Ay(t)+Ey(t-\tau)+BFy(t),&t\in\mathbb{R}^+,\\
    y(0)=y_0,\\
    y(t)=\varphi(t),&t\in(-\tau,0)
\end{cases}
\end{equation*}
    has a unique solution in $C([0,+\infty);X)$.
\end{remark}

\end{document}